\newtheorem{thm}{\normalfont{\textsc{Theorem}}}
\newtheorem{prop}[thm]{\normalfont{\textsc{Proposition}}}
\newtheorem{lemma}[thm]{\normalfont{\textsc{Lemma}}}
\theoremstyle{remark}
\newtheorem{remark}[thm]{\textsc{Remark}}
\newtheorem{defn}[thm]{\textsc{Definition}}
\newtheorem{question}[thm]{\textsc{Question}}
\newenvironment{altproof3}{\vspace{1ex}\noindent{\textit{Proof of Proposition~\ref{unibound}}}\hspace{0.5em}}
	{\hfill\qed\vspace{1ex}}
	\newenvironment{altproof4}{\vspace{1ex}\noindent{\textit{Proof of Lemma~\ref{frount}}}\hspace{0.5em}}
	{\hfill\qed\vspace{1ex}}
\def\cprime{$'$}
\newcommand{\nr}[1]{\vspace{0.1ex}\noindent\hspace*{9mm}\llap{\textup{(#1)}}}
\newcommand{\norm}[1]{\left\Vert#1\right\Vert}
\newcommand{\abs}[1]{\left\vert#1\right\vert}
\newcommand{\set}[1]{\left\{#1\right\}}
\newcommand{\field}{\mathbb K}
\newcommand{\nat}{\mathbb N}
\newcommand{\real}{\mathbb R}
\newcommand{\complex}{\mathbb C}
\newcommand{\eps}{\varepsilon}
\newcommand{\To}{\longrightarrow}
\newcommand{\functional}[2]{\langle #1 ,\,  #2 \rangle }
\newcommand{\cball}[1]{B_{#1}}
\newcommand{\dual}{\sp{\ast}}
\newcommand{\compactop}{\ensuremath{\mathscr{K}}}
\newcommand{\szlenkop}[1]{\ensuremath{\mathscr{S\hspace{-0.8mm}Z}_{\hspace{-1mm}#1}}}
\newcommand{\allop}{\ensuremath{\mathscr{B}}}
\newcommand{\opideal}{\ensuremath{\mathscr{I}}}
\newcommand{\mszlenk}[1]{{\rm Sz}(#1)}
\newcommand{\meeszlenk}[2]{{\rm Sz}_{#1} ( #2 )}
\newcommand{\mdset}[3]{s^{#1}_{#2}( #3 )}
\newcommand{\diam}{{\rm diam}}
\newcommand{\ord}{{\textsc{Ord}}}
\newcommand{\cf}[1]{cf \left( #1\right)}
\newcommand{\fin}{^{ < \, \infty}}
\newcommand{\eff}{\mathcal{F}}
\newcommand{\gee}{\mathcal{G}}
\newcommand{\arr}{\mathcal{R}}
\newcommand{\fresh}{Fr{\'e}chet}
\newcommand{\hajek}{H{\'a}jek}
\newcommand{\kreinmilman}{{K}re\u\i n-Mil{\cprime}man}
\newcommand{\pelch}{Pe{\l}czy{\'n}ski}
\newcommand{\reinov}{Re{\u\i}nov}
\newcommand{\cyril}{Tsirel$'$son}
\numberwithin{thm}{section}
\numberwithin{equation}{section}
\begin{document}
\title{Direct sums and the {S}zlenk index\footnote{Research supported by an ANU PhD Scholarship. The present work forms part of the author's doctoral dissertation, written at the Australian National University under the supervision of Dr. Richard J. Loy.}}
\author{Philip A. H. Brooker}
\date{} 
\maketitle

\begin{abstract}
For $\alpha$ an ordinal and $1<p<\infty$, we determine a necessary and sufficient condition for an $\ell_p$-direct sum of operators to have Szlenk index not exceeding $\omega^\alpha$. It follows from our results that the Szlenk index of an $\ell_p$-direct sum of operators is determined in a natural way by the behaviour of the $\eps$-Szlenk indices of its summands. Our methods give similar results for $c_0$-direct sums.
\end{abstract}

\section*{Introduction}
The Szlenk index was introduced by W.~Szlenk in his influential paper \cite{Szlenk1968}, where an ordinal index was used to show that the class of all separable, reflexive Banach spaces contains no universal element. Since then, the Szlenk index and its variants have taken on an increasingly important role in the study of Banach spaces and their operators. We refer the reader to the surveys \cite{Lancien2006} and \cite{Rosenthal2003} for details on some of the main applications of the Szlenk index.

A class of closed operator ideals naturally related to the Szlenk index has been introduced and systematically studied by the present author in \cite{Brookerc}. These operator ideals are denoted $\szlenkop{\alpha}$, where $\alpha$ is an ordinal, and elements of $\szlenkop{\alpha}$ are known as \emph{$\alpha$-Szlenk operators}. The operator ideals $\szlenkop{\alpha}$ are studied in \cite{Brookerc} with regard to their operator ideal properties and their relationship to other closed operator ideals, in particular the class of Asplund operators.

The purpose of the present paper is to present a detailed analysis of the behaviour of the Szlenk index under the process of taking $c_0$ and $\ell_p$-direct sums of operators. In particular, we give a precise formulation of the Szlenk index of a direct sum of operators in terms of the behaviour of the $\eps$-Szlenk indices of the summands. Our motivation for this is as follows. Firstly, forming direct sums is a fundamental construction in Banach space theory, often being used to construct examples with a particular property, and so we feel it essential to understand precisely how the Szlenk index behaves under this procedure. Secondly, we are motivated by the following basic question of operator ideal theory: \begin{question}\label{redpenquest} Let $\opideal$ be a given operator ideal. Does $\opideal$ have the factorisation property? That is, does every element of $\opideal$ factor continuously and linearly through a Banach space whose identity operator belongs to $\opideal$? \end{question}
In \cite{Brookerc}, results and techniques developed in the current paper are applied to obtain both positive and negative answers to Question~\ref{redpenquest} for the case $\opideal = \szlenkop{\alpha}$, with the answer depending upon ordinal properties of $\alpha$.

We now outline the structure of the current paper. In Section~\ref{prelimlim} we detail necessary notation and background results regarding the Szlenk index, including several relevant results from \cite{Brookerc}. Our main results are presented in Section~\ref{mainstuffing}. Firstly, we consider the Szlenk index of $\ell_1$ and $\ell_\infty$-direct sums; this case is rather straightforward, but worth noting explicitly for the sake of completeness. We then move on to our main concern, providing a formulation of the Szlenk index of $c_0$ and $\ell_p$-direct sums of operators, where $1<p<\infty$ (see, in particular, Theorem~\ref{punibound}). This case is far more subtle than the case of $\ell_1$ and $\ell_\infty$-direct sums and, as such, requires substantially more effort to accomplish the desired formulation of the Szlenk index of the direct sum. Section~\ref{mainstuffing} concludes with some applications of the earlier operator theoretic results to the Szlenk index of Banach spaces. The final section, Section~\ref{techsect}, constitutes almost half of the paper and is devoted to proving the main technical lemma used in Section~\ref{mainstuffing}, namely Lemma~\ref{frount}.

\section{Preliminaries}\label{prelimlim}
Banach spaces are typically denoted by the letters $E$ and $F$. For a Banach space $E$ and nonempty bounded $S\subseteq E$, we define $\abs{S} := \sup\{\norm{x}\mid x\in S\}$. By $\cball{E}$ we denote the closed unit ball of $E$, and by $I_E$ the identity operator of $E$. The class of all bounded linear operators between arbitrary Banach spaces is denoted by $\allop$, and the class of all compact operators by $\compactop$. We write $\ord$ for the class of all ordinals, whose elements are typically denoted by the lower-case Greek letters $\alpha$, $\beta$ and $\gamma$. For $\Lambda$ a set, $\Lambda\fin$ denotes the set of all nonempty finite subsets of $\Lambda$. When $\Lambda$ denotes the index set over which we take a direct sum or direct product, it is always assumed that $\Lambda$ is nonempty.

Let $p\in\set{0}\cup(1, \, \infty)$ and $q\in[1,\,\infty)$. We say that \emph{$q$ is dual to $p$}, or equivalently, \emph{$p$ is predual to $q$}, if $(p, \, q)\in \set{(0,\,1)}\cup\set{(r,\,r(r-1)^{-1})\mid r\in(1,\,\infty)}$.

For $1\leqslant p\leqslant \infty$, a set $\Lambda$ and Banach spaces $E_\lambda$, $\lambda \in \Lambda$, the $\ell_p$-direct sum of $\set{E_\lambda \mid \lambda \in \Lambda}$ is denoted $(\bigoplus_{\lambda \in \Lambda}E_\lambda)_p$, and the $c_0$-direct sum of $\set{E_\lambda \mid \lambda \in \Lambda}$ is denoted $(\bigoplus_{\lambda \in \Lambda}E_\lambda)_0$.
If there is a Banach space $E$ such that $E_\lambda = E$ for all $\lambda \in \Lambda$, then we may also write the $\ell_p$-direct sum and the $c_0$-direct sum as $\ell_p (\Lambda, \, E)$ and $c_0 (\Lambda, \, E)$, respectively. Throughout, for $1<p, q<\infty$ satisfying $p+q=pq$, we implicitly identify $(\bigoplus_{\lambda \in \Lambda}E_\lambda)\dual_p$ with $(\bigoplus_{\lambda \in \Lambda}E_\lambda\dual)_q$, so that the dual of a direct sum is the dual direct sum of the duals of the spaces $E_\lambda$. Making this identification allows us to consider direct products of the form $\prod_{\lambda \in \Lambda}K_\lambda$, where $K_\lambda \subseteq E_\lambda\dual$ and $(\abs{K_\lambda})_{\lambda \in \Lambda} \in \ell_q (\Lambda)$, as subsets of $(\bigoplus_{\lambda \in \Lambda}E_\lambda)\dual_p$. Similarly, $(\bigoplus_{\lambda \in \Lambda}E_\lambda)\dual_0$ is naturally identified with $(\bigoplus_{\lambda \in \Lambda}E_\lambda\dual)_1$ throughout.

Let $\Lambda$ be a set, $\set{E_\lambda \mid \lambda \in \Lambda}$ a family of Banach spaces indexed by $\Lambda$ and $p=0$ or $1<p<\infty$. For $\arr \subseteq \Lambda$, we denote by $U_\arr$ the canonical injection of $(\bigoplus_{\lambda \in \arr}E_\lambda)_p $ into $(\bigoplus_{\lambda \in \Lambda}E_\lambda)_p$, and by $P_\arr$ the canonical surjection of $(\bigoplus_{\lambda \in \Lambda}E_\lambda)_p$ onto $(\bigoplus_{\lambda \in \arr}E_\lambda)_p$. 

For a set $\Lambda$, a family of Banach spaces $\set{E_\lambda \mid \lambda \in \Lambda}$ and nonempty, bounded subsets $S_\lambda \subseteq E_\lambda $, $ \lambda \in \Lambda$, we say that $\set{S_\lambda \subseteq E_\lambda \mid \lambda \in \Lambda}$ is \emph{uniformly bounded} if $\sup\{ \abs{S_\lambda}\mid \lambda \in \Lambda\} <\infty$. If $\set{F_\lambda \mid \lambda \in \Lambda}$ is also a family of Banach spaces indexed by $\Lambda$, a set of operators $\set{T_\lambda \in \allop (E_\lambda , \, F_\lambda)\mid \lambda \in \Lambda}$ is said to be \emph{uniformly bounded} if $\sup\{\norm{T_\lambda}\mid \lambda \in \Lambda\}<\infty$. Given $1\leqslant p \leqslant \infty$ and a uniformly bounded family of operators $\set{T_\lambda \in \allop (E_\lambda , \, F_\lambda)\mid \lambda \in \Lambda}$, the $\ell_p$-\emph{direct sum of} $\set{T_\lambda \in \allop (E_\lambda , \, F_\lambda)\mid \lambda \in \Lambda}$, denoted $(\bigoplus_{\lambda \in \Lambda}T_\lambda)_p $, is the continuous linear map  that sends $(x_\lambda)_{\lambda \in \Lambda}\in (\bigoplus_{\lambda \in \Lambda}E_\lambda)_p $ to $(T_\lambda x_\lambda)_{\lambda \in \Lambda}\in (\bigoplus_{\lambda \in \Lambda}F_\lambda)_p$. Each of the operators $T_\lambda$ ($\lambda \in \Lambda$) is a \emph{summand} of the direct sum $(\bigoplus_{\lambda \in \Lambda}T_\lambda)_p $.


A Banach space $E$ over the field $\real$ of real scalars is said to be \emph{Asplund} if every real-valued convex continuous function defined on a convex open subset $U$ of $E$ is {\fresh} differentiable on a dense $G_\delta$ subset of $U$. Our arguments hold for Banach spaces over the field $\field = \real$ or $\complex$; note that the notion of Asplund space may be extended (somewhat artificially) to complex Banach spaces by declaring a complex Banach space to be Asplund precisely when its underlying real Banach space structure is an Asplund space in the real scalar sense. By extending the notion of Asplund space to complex Banach spaces in this way, many of the well-known characterisations of Asplund spaces - for instance, a Banach space is Asplund if and only if each of its separable subspaces has separable dual \cite[Theorem~5.7]{Deville1993} - then hold also for complex Asplund spaces.

For Banach spaces $E$ and $F$, an operator $T:E\To F$ is \emph{Asplund} if for any finite positive measure space $(\Omega, \, \Sigma , \, \mu)$, any $S\in \allop (F, \,  L_\infty (\Omega, \, \Sigma , \, \mu))$ and any $\eps >0$, there exists $B\in \Sigma$ such that $\mu (B)>\mu (\Omega )-\eps$ and $\set{f\chi_B\mid f\in ST(\cball{E})}$ is relatively compact in $L_\infty (\Omega, \, \Sigma , \, \mu)$ (here $\chi_B$ denotes the characteristic function of $B$ on $\Omega$). We note that some authors, for example in \cite{Pietsch1980} and \cite{Heinrich1980}, refer to Asplund operators as \emph{decomposing operators}. Standard references for Asplund operators are \cite{Pietsch1980} and \cite{Stegall1981}, where it is shown that the Asplund operators form a closed operator ideal and that a Banach space is an Asplund space if and only if its identity operator is an Asplund operator. A further impressive result is that every Asplund operator factors through an Asplund space; this is due independently to O.~{\reinov} \cite{Reuinov1978}, S.~Heinrich \cite{Heinrich1980} and C.~Stegall \cite{Stegall1981}.

We now define the Szlenk index, noting that our definition varies from that given by W. Szlenk in \cite{Szlenk1968}. However, the two definitions give the same index for operators acting on separable Banach spaces containing no isomorphic copy of $\ell_1$ (see the proof of \cite[Proposition~3.3]{Lancien1993} for details). 

Let $E$ be a Banach space, $K \subseteq E\dual$ a $w\dual$-compact set and $\eps >0$. Define
\[
\mdset{\mbox{}}{\eps}{K} := \set{x \in K \mid \diam (K \cap V)> \eps \mbox{ for every } w\dual \mbox{-open }V\ni x}.
\] We iterate $s_\eps$ transfinitely as follows: let $\mdset{0}{\eps}{K} = K$, $ \mdset{\alpha+1}{\eps}{K}= \mdset{\mbox{}}{\eps}{\mdset{\alpha}{\eps}{K}}$ for each ordinal $\alpha$ and, if $\alpha$ is a limit ordinal, $ \mdset{\alpha}{\eps}{K} = \bigcap_{\beta< \alpha} \mdset{\beta}{\eps}{K}$. 

The \emph{$\eps$-Szlenk index of $K$}, denoted $\meeszlenk{\eps}{K}$, is the class of all ordinals $\alpha$ such that $\mdset{\alpha}{\eps}{K} \neq \emptyset$. The \emph{Szlenk index of $K$} is the class $\bigcup_{\eps >0}\meeszlenk{\eps}{K}$. Note that $\meeszlenk{\eps}{K}$ (resp., $\mszlenk{K}$) is either an ordinal or the class $\ord$ of all ordinals. If $\meeszlenk{\eps}{K}$ (resp., $\mszlenk{K}$) is an ordinal, then we write $\meeszlenk{\eps}{K} < \infty$ (resp., $\mszlenk{K}<\infty$), and otherwise we write $\meeszlenk{\eps}{K} = \infty$ (resp., $\mszlenk{K}=\infty$). For a Banach space $E$, the \emph{$\eps$-Szlenk index of $E$} is $\meeszlenk{\eps}{E} = \meeszlenk{\eps}{{\cball{E\dual}}}$, and the \emph{Szlenk index of $E$} is $\mszlenk{E} = \mszlenk{\cball{E\dual}}$. If $T: E\To F$ is an operator, the \emph{$\eps$-Szlenk index of $T$} is $\meeszlenk{\eps}{T} = \meeszlenk{\eps}{{T\dual\cball{F\dual}}}$, whilst the \emph{Szlenk index of $T$} is $\mszlenk{T} = \mszlenk{T\dual\cball{F\dual}}$.

It is clear that the Szlenk index of a nonempty $w\dual$-compact set cannot be $0$. We note also that, by $w\dual$-compactness, the $\eps$-Szlenk index of a nonempty $w\dual$-compact set $K$ is never a limit ordinal.

The following proposition states some known facts about the Szlenk index.

\begin{prop}\label{collection}
Let $E$ and $F$ be Banach spaces, $T: E\To F$ an operator and $K\subseteq E\dual$ a nonempty $w\dual$-compact set.

\nr{i} If $E$ is isomorphic to a quotient or subspace of $F$, then $ \mszlenk{E}\leqslant \mszlenk{F}$. In particular, the Szlenk index is an isomorphic invariant of a Banach space.

\nr{ii} $\mszlenk{E} < \infty$ if and only if $E$ is an Asplund space. Similarly, $\mszlenk{T} < \infty$ if and only if $T$ is an Asplund operator.

\nr{iii} If $K$ is absolutely convex and $\mszlenk{K}<\infty$, then there exists an ordinal $\alpha$ such that $\mszlenk{K} = \omega^\alpha$. In particular, the Szlenk index of an Asplund space or Asplund operator is of the form $\omega^\alpha$ for some (unique) ordinal $\alpha$.

\nr{iv} $\mszlenk{K} = 1$ if and only if $K$ is norm-compact. In particular, $\mszlenk{E}=1$ if and only if ${\rm dim}(E) < \infty$, and $\mszlenk{T} = 1$ if and only if $T$ is compact.

\nr{v} $\mszlenk{E\oplus F} = \max \set{\mszlenk{E}, \, \mszlenk{F}}$.
\end{prop}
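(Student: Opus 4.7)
These are standard background facts about the Szlenk index, and my plan is to combine short direct arguments with citations to existing literature rather than reproduce long proofs from scratch.

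For (i), the cleanest route is via the operator-ideal property of $\szlenkop{\alpha}$ established in \cite{Brookerc}. If $G\subseteq F$ is isomorphic to $E$ and $\iota\colon G\hookrightarrow F$ is the inclusion, then $\iota\dual\cball{F\dual}=\cball{G\dual}$ by Hahn-Banach, so $\mszlenk{\iota}=\mszlenk{G}=\mszlenk{E}$; on the other hand, writing $\iota=I_F\circ\iota$ and applying the ideal property to $I_F$ gives $\mszlenk{\iota}\leqslant\mszlenk{I_F}=\mszlenk{F}$. The quotient case is analogous: if $q\colon F\to E$ is a quotient map, the adjoint $q\dual$ is a $w\dual$-homeomorphism of $\cball{E\dual}$ onto $q\dual\cball{E\dual}\subseteq\cball{F\dual}$, so $\mszlenk{E}=\mszlenk{q\dual\cball{E\dual}}\leqslant\mszlenk{\cball{F\dual}}=\mszlenk{F}$ by monotonicity of the Szlenk index under set-inclusion. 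Isomorphic invariance follows by symmetry.

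Parts (ii) and (iii) are deep, and I would simply cite them. The space-level content of (ii) is classical (see \cite{Pietsch1980,Stegall1981}); Lancien \cite{Lancien1993} proved that the Szlenk index of an Asplund space is always of the form $\omega^\alpha$; and the operator-level version of (ii), together with the extension of (iii) to arbitrary absolutely convex $w\dual$-compact sets of finite Szlenk index, are both established in the author's companion paper \cite{Brookerc}.

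For (iv), if $K$ is norm-compact then the $w\dual$- and norm-topologies coincide on $K$, so every $x\in K$ has a $w\dual$-open neighbourhood of norm-diameter at most $\eps$; hence $\mdset{1}{\eps}{K}=\emptyset$ for all $\eps>0$ and $\mszlenk{K}=1$. Conversely, $\mszlenk{K}=1$ produces for each $\eps>0$ a finite cover of $K$ by $w\dual$-open sets of norm-diameter at most $\eps$, so $K$ is totally bounded in norm and, being $w\dual$-closed, norm-compact. For (v), ``$\geqslant$'' is immediate from (i); for ``$\leqslant$'', equip $E\oplus F$ with the $\ell_1$-norm so that $\cball{(E\oplus F)\dual}=\cball{E\dual}\times\cball{F\dual}$ (with $\ell_\infty$-norm on the product), and use the one-step identity $\mdset{\mbox{}}{\eps}{K_1\times K_2}=(\mdset{\mbox{}}{\eps}{K_1}\times K_2)\cup(K_1\times\mdset{\mbox{}}{\eps}{K_2})$ to drive a transfinite induction bounding $\meeszlenk{\eps}{E\oplus F}$ in terms of $\meeszlenk{\eps}{E}$ and $\meeszlenk{\eps}{F}$; combined with (iii), this absorbs the bound back into the $\omega^\alpha$-form and yields $\mszlenk{E\oplus F}=\max\{\mszlenk{E},\mszlenk{F}\}$. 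The only real obstacle in the whole proposition is (iii), which I would invoke from \cite{Brookerc}; the induction bookkeeping in (v) is routine once (iii) is at hand.
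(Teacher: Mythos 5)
The paper itself gives no proof of Proposition~\ref{collection}; it simply lists references in the paragraph following the statement (\cite{H'ajek2007a} for (i)--(ii), \cite[Proposition~2.10]{Brookerc} for the operator half of (ii), \cite{Lancien1996} for (iii), a compactness/topology observation plus Schauder for (iv), and \cite[Proposition~2.4]{H'ajek2007} for (v)). You go further and sketch honest arguments for (i), (iv), (v), which is compatible with the paper's route and mostly fine. Two things to tighten. First, a small misattribution: the $\omega^\alpha$-form of the Szlenk index is from Lancien's 1996 paper (\cite{Lancien1996}), not \cite{Lancien1993}; and the paper does not claim the absolutely-convex extension is proved in \cite{Brookerc} --- rather it observes that Lancien's original argument uses only convexity and symmetry of $\cball{E\dual}$ and so transfers verbatim. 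Second, and more substantively, your treatment of (v) glosses over a real difficulty: the one-step identity $s_\eps(K_1\times K_2)=(s_\eps(K_1)\times K_2)\cup(K_1\times s_\eps(K_2))$ is correct (for the $\ell_\infty$ product norm), but it does \emph{not} ``drive a transfinite induction'' automatically, because $s_\eps$ applied to a union is not the union of the $s_\eps$'s --- one only has $s_\eps(A\cup B)\supseteq s_\eps(A)\cup s_\eps(B)$, which goes the wrong way. Controlling derivations of unions is exactly what Lemma~\ref{unionlemma} in Section~\ref{techsect} is for, and a correct induction here must carry a pair of ordinal indices and use the fact that when one coordinate has left its derived set one can choose a $w\dual$-neighbourhood avoiding it, collapsing the union to a single product. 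This can be done (it is essentially \cite[Proposition~2.4]{H'ajek2007}), so your plan is sound, but calling the bookkeeping ``routine'' understates what is needed. Also, in (i) you invoke $\mszlenk{G}=\mszlenk{E}$ for $G\cong E$ before you have stated isomorphic invariance; reorder by first deriving invariance (e.g.\ from the ideal property applied to $I_E=S^{-1}I_GS$ or from homogeneity and monotonicity of the derivation) and then concluding the subspace and quotient cases.
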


Parts (i) of Proposition~\ref{collection} is discussed in \cite{H'ajek2007a}. Part (ii) is discussed in \cite{H'ajek2007a} in the case of spaces, and the more general case of operators is established in \cite[Proposition~2.10]{Brookerc}. Part (iii) was proved for $K=\cball{E\dual}$ in \cite{Lancien1996}; see also p.64 of \cite{H'ajek2008}. As the proof of the case $K=\cball{E\dual}$ relies only upon the fact that $\cball{E\dual}$ is convex and symmetric (that is, absolutely convex), the proof applies also to arbitrary absolutely convex $K$. Part (iv) is a consequence of the fact that a $w\dual$-compact set is norm-compact if and only if its relative $w\dual$ and norm topologies coincide (see, e.g., \cite[Corollary~3.1.14]{Engelking1989}), with the final assertion regarding operators requiring the use of Schauder's theorem. Part (v) is essentially Proposition~2.4 of \cite{H'ajek2007} (see also \cite[Proposition~14]{Odell2007} for the separable case), and will be improved upon in Theorem~\ref{accunibound} below.

\begin{defn}
For each ordinal $\alpha$, define $\szlenkop{\alpha} := \set{T\in \allop \mid \mszlenk{T} \leqslant \omega^\alpha} $.
\end{defn}

As noted in the introduction, elements of $\szlenkop{\alpha}$ are known as $\alpha$-\emph{Szlenk operators}. We have the following:
\begin{thm}[{\cite[Theorem~2.2]{Brookerc}}]\label{idealthm}
Let $\alpha$ be an ordinal. Then $\szlenkop{\alpha}$ is a closed operator ideal.
\end{thm}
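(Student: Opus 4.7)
The plan is to verify the four defining properties of a closed operator ideal: (i) $\szlenkop{\alpha}$ contains every rank-one operator; (ii) for all Banach spaces $E,F$, $\szlenkop{\alpha}(E,F)$ is a linear subspace of $\allop(E,F)$; (iii) $\szlenkop{\alpha}$ is stable under both left and right composition with arbitrary bounded operators; and (iv) $\szlenkop{\alpha}(E,F)$ is norm-closed in $\allop(E,F)$. Two elementary preparatory facts will be used constantly: subset-monotonicity of every iterate $\mdset{\alpha}{\eps}{\cdot}$, and the scaling identity $\mdset{\alpha}{\eps}{\lambda K}=\lambda\,\mdset{\alpha}{\eps/|\lambda|}{K}$ for $\lambda\neq 0$, whence $\mszlenk{\lambda K}=\mszlenk{K}$.

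Property (i) is immediate from Proposition~\ref{collection}(iv): a rank-one operator is compact, so has Szlenk index $1\leqslant\omega^\alpha$. For left composition in (iii) --- given $T\in\szlenkop{\alpha}(E,F)$ and $R\in\allop(F,G)$ --- the inclusion $(RT)\dual\cball{G\dual}=T\dual(R\dual\cball{G\dual})\subseteq\|R\|\,T\dual\cball{F\dual}$ together with subset-monotonicity and scaling gives $\mszlenk{RT}\leqslant\mszlenk{T}\leqslant\omega^\alpha$. For right composition --- $T\in\szlenkop{\alpha}(E,F)$ and $S\in\allop(D,E)$ --- the identity $(TS)\dual\cball{F\dual}=S\dual(T\dual\cball{F\dual})$ reduces the task to the lemma $\mszlenk{\phi(K)}\leqslant\mszlenk{K}$, valid for every bounded $w\dual$-$w\dual$-continuous linear $\phi\colon E\dual\to D\dual$ and every $w\dual$-compact $K\subseteq E\dual$; this is proved by transfinite induction, the successor step needing a closed-map/tube-lemma argument to handle fibres of $\phi|_K$. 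Additive closure in (ii) follows from (iii) by factoring $T_1+T_2=\Sigma\circ(T_1\oplus T_2)\circ\Delta$, where $\Delta(x)=(x,x)$ and $\Sigma(y_1,y_2)=y_1+y_2$: the task reduces to $T_1\oplus T_2\in\szlenkop{\alpha}$, which is the operator analogue of Proposition~\ref{collection}(v), namely $\mszlenk{T_1\oplus T_2}=\max\{\mszlenk{T_1},\mszlenk{T_2}\}$.

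The substantive step is norm-closure (iv), the core input being the following perturbation lemma: if $K,L$ are non-empty $w\dual$-compact subsets of $E\dual$ with $K\subseteq L+\delta\cball{E\dual}$ and $\eps>2\delta$, then
\[
\mdset{\alpha}{\eps}{K}\;\subseteq\;\mdset{\alpha}{\eps-2\delta}{L}+\delta\cball{E\dual}\qquad\text{for every ordinal }\alpha,
\]
so in particular $\meeszlenk{\eps}{K}\leqslant\meeszlenk{\eps-2\delta}{L}$. Granting this, if $T_n\to T$ in operator norm with each $T_n\in\szlenkop{\alpha}$ and $\eps>0$ is given, I choose $n$ with $\delta_n:=\|T-T_n\|<\eps/2$, apply the lemma to $K=T\dual\cball{F\dual}$ and $L=T_n\dual\cball{F\dual}$ (noting $K\subseteq L+\delta_n\cball{E\dual}$), and obtain $\meeszlenk{\eps}{T}\leqslant\meeszlenk{\eps-2\delta_n}{T_n}\leqslant\mszlenk{T_n}\leqslant\omega^\alpha$. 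Taking the supremum over $\eps>0$ yields $\mszlenk{T}\leqslant\omega^\alpha$, so $T\in\szlenkop{\alpha}$.

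I expect the chief obstacle to be the successor step of the perturbation lemma. Given $x\in\mdset{\alpha+1}{\eps}{K}$, the inductive hypothesis one level down lets one write $x=y+z$ with $y\in\mdset{\alpha}{\eps-2\delta}{L}$ and $\|z\|\leqslant\delta$; one must then verify, for every $w\dual$-open $V\ni y$, that $V$ contains two points of $\mdset{\alpha}{\eps-2\delta}{L}$ separated by more than $\eps-2\delta$. The witnesses come from a pair in $\mdset{\alpha}{\eps}{K}\cap(V+z)$ at distance greater than $\eps$, which must be pulled across the inclusion $K\subseteq L+\delta\cball{E\dual}$ into $V\cap\mdset{\alpha}{\eps-2\delta}{L}$; coordinating the $w\dual$-neighbourhoods with the norm perturbations is the delicate point and lies at the technical heart of the theorem.
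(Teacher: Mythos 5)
The paper does not prove Theorem~\ref{idealthm}: it is cited verbatim as Theorem~2.2 of the companion paper \cite{Brookerc}, so there is no in-paper argument against which to compare your proposal. Assessed on its own merits, your high-level strategy is the standard and correct one --- rank-one operators via Proposition~\ref{collection}(iv), left composition via the obvious inclusion $(RT)\dual\cball{G\dual}\subseteq\norm{R}T\dual\cball{F\dual}$, right composition via a quotient-type lemma for $w\dual$-$w\dual$-continuous maps, additivity via $\Sigma\circ(T_1\oplus T_2)\circ\Delta$ together with the operator analogue of Proposition~\ref{collection}(v), and closedness via a perturbation lemma. You also correctly identify the successor step of the perturbation lemma as the crux. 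The difficulty is that your resolution of that crux does not work as stated.

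Consider the base case of your perturbation lemma. Let $x\in\mdset{}{\eps}{K}$ and suppose no $y\in L\cap(x+\delta\cball{E\dual})$ lies in $\mdset{}{\eps-2\delta}{L}$. Covering the compact fibre $N:=L\cap(x+\delta\cball{E\dual})$ by finitely many $w\dual$-open $W_{y_1},\ldots,W_{y_n}$ with $\diam(L\cap W_{y_i})\leqslant\eps-2\delta$, and separating $x$ from the compact set $(L\setminus\bigcup_iW_{y_i})+\delta\cball{E\dual}$ by a $w\dual$-open $V\ni x$, one finds that every $k\in K\cap V$ decomposes as $k=l+w$ with $l\in L\cap W_{y_j}$ for some $j$ and $\norm{w}\leqslant\delta$; but the sets $(L\cap W_{y_j})+\delta\cball{E\dual}$ each have diameter $\leqslant\eps$, and several of them may contain $x$ and intersect $V$, so the best one obtains is $K\cap V\subseteq x+\eps\cball{E\dual}$ and hence $\diam(K\cap V)\leqslant 2\eps$ --- not $\leqslant\eps$. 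You cannot ``pull the witnesses across'' the inclusion $K\subseteq L+\delta\cball{E\dual}$ as you hope, because a norm-$\delta$ perturbation of a point in a $w\dual$-open set may well leave that set; this is precisely the ``coordination'' difficulty you flag, and I do not see how your sketch overcomes it. The same factor-of-$2$ loss, incidentally, is present in your quotient lemma for right composition (one proves $\mdset{\mbox{}}{2\eps}{\phi(K)}\subseteq\phi(\mdset{\mbox{}}{\eps}{K})$, not the sharp inclusion), though there it is harmless.

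Two ways to repair the closedness argument. The cheap fix: accept the factor of $2$ and prove $\meeszlenk{2\eps}{K}\leqslant\meeszlenk{\eps-2\delta}{L}$ whenever $K\subseteq L+\delta\cball{E\dual}$ and $\eps>2\delta$; taking the supremum over $\eps>0$ after choosing $n$ with $\norm{T-T_n}<\eps/4$ still yields $\mszlenk{T}\leqslant\omega^\alpha$, which is all that closedness requires. The structural fix, which is what is done in the literature: replace the bare set inclusion by the hypothesis of a $w\dual$-$w\dual$-continuous map $\phi\colon K\to L$ with $\sup_{k\in K}\norm{k-\phi(k)}\leqslant\delta$; then the clean statement $\phi(\mdset{\alpha}{\eps}{K})\subseteq\mdset{\alpha}{\eps-2\delta}{L}$ does hold, with a short successor-step proof obtained by pulling back a neighbourhood of $\phi(x)$ via $\phi^{-1}$ rather than attempting to transport norm perturbations across $w\dual$-open sets. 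In the application one cannot take $K=T\dual\cball{F\dual}$ and $L=T_n\dual\cball{F\dual}$ directly, since there is no natural $\phi$; instead one works with the graph sets $G_T=\set{(f,T\dual f)\mid f\in\cball{F\dual}}\subseteq(F\oplus_1E)\dual$ and $G_{T_n}$, between which $(f,T\dual f)\mapsto(f,T_n\dual f)$ is a $w\dual$-homeomorphism moving points by at most $\norm{T-T_n}$. This is the route I would recommend you pursue when filling in the details.
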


\section{Main results}\label{mainstuffing}

It is obvious that a direct sum of operators factors any of its summands. Thus, since $\set{T\in \allop \mid \mszlenk{T} <\infty}$ is the operator ideal of Asplund operators (see Proposition~\ref{collection}(ii)), it is only interesting to consider the Szlenk index of a direct sum of operators in the case that all of the summands are Asplund. With this in mind, we henceforth consider direct sums of Asplund operators only.

\subsection{$\ell_1$-direct sums and $\ell_\infty$-direct sums}
The task of determining the Szlenk index of $\ell_1$-direct sums and $\ell_\infty$-direct sums of operators is made considerably easier by the fact that the Banach spaces $\ell_1$ and $\ell_\infty$ fail to be Asplund, for this ensures that the norms of the summand operators must exhibit $c_0$-like behaviour in order for the direct sum operator to be Asplund. More precisely, we have the following result. 
\begin{prop}\label{nonascase}
Let $\Lambda$ be a set, $\set{E_\lambda \mid \lambda \in \Lambda}$ and $\set{F_\lambda \mid \lambda \in \Lambda}$ families of Banach spaces, $\set{T_\lambda \in \allop (E_\lambda , \, F_\lambda)\mid \lambda \in \Lambda}$ a uniformly bounded family of Asplund operators and $p=1$ or $p=\infty$. The following are equivalent:
\begin{itemize}
\item[{\rm (i)}] $\mszlenk{(\bigoplus_{\lambda \in \Lambda}T_\lambda)_p} <\infty$ (that is, $(\bigoplus_{\lambda \in \Lambda}T_\lambda)_p$ is Asplund).
\item[{\rm (ii)}] $\mszlenk{(\bigoplus_{\lambda \in \Lambda}T_\lambda)_p} = \sup\{\mszlenk{T_\lambda}\mid \lambda \in \Lambda\}$.
\item[{\rm (iii)}] $(\norm{T_\lambda})_{\lambda \in \Lambda}\in c_0(\Lambda )$.
\end{itemize}
\end{prop}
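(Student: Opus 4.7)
The implication (ii) $\Rightarrow$ (i) is immediate: each $\mszlenk{T_\lambda}$ is an ordinal by Proposition~\ref{collection}(ii) and the Asplundness hypothesis, and the supremum of a set of ordinals is an ordinal. The plan for the two substantive directions is as follows. For (i) $\Rightarrow$ (iii) I would argue by contrapositive, constructing from a putative sequence of ``large'' summands an explicit factorization, through $\big(\bigoplus_\lambda T_\lambda\big)_p$, of the identity of the non-Asplund space $\ell_p$. For (iii) $\Rightarrow$ (ii) I would exploit the condition $(\norm{T_\lambda})_\lambda \in c_0(\Lambda)$ to approximate $\big(\bigoplus_\lambda T_\lambda\big)_p$ in norm by ``essentially finite'' direct sums, and then appeal to closedness of the ideals $\szlenkop{\alpha}$ (Theorem~\ref{idealthm}).

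The principal obstacle is (i) $\Rightarrow$ (iii). Supposing $(\norm{T_\lambda})_\lambda \notin c_0(\Lambda)$, pick $\eps > 0$ and pairwise distinct $\lambda_n \in \Lambda$ with $\norm{T_{\lambda_n}} > \eps$, together with $x_n \in \cball{E_{\lambda_n}}$ and $y_n^* \in \cball{F_{\lambda_n}\dual}$ for which $y_n^*(T_{\lambda_n} x_n) > \eps$. Define $A : \ell_p \To \big(\bigoplus_{\lambda \in \Lambda} E_\lambda\big)_p$ by sending the $n$-th unit vector to $U_{\{\lambda_n\}} x_n$, and $B : \big(\bigoplus_{\lambda \in \Lambda} F_\lambda\big)_p \To \ell_p$ by $(y_\lambda)_\lambda \mapsto (y_n^*(y_{\lambda_n}))_n$. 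The key feature restricting the argument to $p \in \{1, \infty\}$ is that, under the $\ell_1$- and $\ell_\infty$-norms, disjointly supported vectors combine via the $\ell_1$- or $\ell_\infty$-combination of their coordinate norms, forcing both $A$ and $B$ to have norm at most $1$. The composition $B\big(\bigoplus_\lambda T_\lambda\big)_p A$ is then the diagonal operator on $\ell_p$ with entries $y_n^*(T_{\lambda_n} x_n) \in \big(\eps,\, \sup_\mu \norm{T_\mu}\big]$, which is invertible. Hence $I_{\ell_p}$ factors through $\big(\bigoplus_\lambda T_\lambda\big)_p$; since the class of Asplund operators forms an operator ideal (Proposition~\ref{collection}(ii)), this would force $\ell_p$ to be Asplund, contradicting the well-known non-Asplundness of $\ell_1$ and $\ell_\infty$. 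It is worth noting that the analogous construction for $p \in (1,\infty)$ or $p = 0$ would only factor the identity of the Asplund space $\ell_p$ or $c_0$, producing no contradiction: this is precisely why the $c_0$ and $\ell_p$ cases treated later in the paper require considerably more delicate techniques.

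For (iii) $\Rightarrow$ (ii), each summand factors through the direct sum via $T_\lambda = P_{\{\lambda\}} \big(\bigoplus_\mu T_\mu\big)_p U_{\{\lambda\}}$, so Theorem~\ref{idealthm} gives $\sup_\lambda \mszlenk{T_\lambda} \leqslant \mszlenk{\big(\bigoplus_\mu T_\mu\big)_p}$. For the reverse inequality, write $\gamma := \sup_\lambda \mszlenk{T_\lambda}$; by Proposition~\ref{collection}(iii), $\mszlenk{T_\lambda} = \omega^{\alpha_\lambda}$ for some ordinal $\alpha_\lambda$, and continuity of the map $\beta \mapsto \omega^\beta$ at limits yields $\gamma = \omega^\alpha$ with $\alpha := \sup_\lambda \alpha_\lambda$, so every $T_\lambda$ lies in the ideal $\szlenkop{\alpha}$. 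By (iii), for every $\eps > 0$ the set $\Lambda_\eps := \{\lambda \in \Lambda : \norm{T_\lambda} \geqslant \eps\}$ is finite, and a direct norm estimate (again relying on the special shape of the $\ell_1$- or $\ell_\infty$-norm) gives
\[
\norm{\Big(\bigoplus_\mu T_\mu\Big)_p \; - \sum_{\lambda \in \Lambda_\eps} U_{\{\lambda\}} T_\lambda P_{\{\lambda\}}} \leqslant \eps.
\]
As a finite sum of elements of $\szlenkop{\alpha}$, the second term belongs to $\szlenkop{\alpha}$; closedness of $\szlenkop{\alpha}$ (Theorem~\ref{idealthm}) and $\eps \to 0$ then yield $\big(\bigoplus_\mu T_\mu\big)_p \in \szlenkop{\alpha}$, i.e., $\mszlenk{\big(\bigoplus_\mu T_\mu\big)_p} \leqslant \omega^\alpha = \gamma$, completing the proof.
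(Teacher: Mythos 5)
Your proof is correct and takes essentially the same route as the paper: (i)~$\Rightarrow$~(iii) is proved by contrapositive via factoring (an isomorphic embedding of, or equivalently the identity of) the non-Asplund space $\ell_p$ through the direct sum, and (iii)~$\Rightarrow$~(ii) is proved by approximating the direct sum in norm by finitely-supported direct sums and invoking closedness of $\szlenkop{\alpha}$. One minor misattribution: the estimate $\norm{(\bigoplus_\mu T_\mu)_p - \sum_{\lambda\in\Lambda_\eps} U_{\set{\lambda}}T_\lambda P_{\set{\lambda}}} \leqslant \eps$ does not depend on the special shape of the $\ell_1$- or $\ell_\infty$-norm, since $\norm{(\bigoplus_\mu S_\mu)_p} = \sup_\mu\norm{S_\mu}$ for every $p\in\set{0}\cup[1,\infty]$; the restriction to $p\in\set{1,\infty}$ is needed only in (i)~$\Rightarrow$~(iii), where the non-Asplundness of $\ell_p$ is what produces the contradiction.
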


\begin{proof}
We prove (iii) $\Rightarrow$ (ii) $\Rightarrow$ (i) $\Rightarrow$ (iii).

Suppose (iii) holds; we will show $\mszlenk{(\bigoplus_{\lambda \in \Lambda}T_\lambda)_p} = \sup\{\mszlenk{T_\lambda}\mid \lambda \in \Lambda\}$. By Proposition~\ref{collection}(iii) there exist ordinals $\alpha_\lambda$, $\lambda \in \Lambda$, with $\mszlenk{T_\lambda} = \omega^{\alpha_\lambda}$ for each $\lambda$. Let $\alpha_\Lambda = \sup\{\alpha_\lambda\mid\lambda \in \Lambda\}$, so that $\sup\{\mszlenk{T_\lambda}\mid \lambda \in \Lambda\} = \omega^{\alpha_\Lambda}$. To see that $(\bigoplus_{\lambda \in \Lambda}T_\lambda)_p\in \szlenkop{\alpha_\Lambda}$, for $n\in \nat$ and $\lambda \in \Lambda$ let 
\[
T_{\lambda , \, n} = \begin{cases}
T_{\lambda}& \text{if $\norm{T_\lambda}>1/n$},\\
0& \text{otherwise}
\end{cases}
\]
and $V_n = (\bigoplus_{\lambda \in \Lambda}T_{\lambda, \, n})_p$. Note that $\set{T_{\lambda , \, n}\mid \lambda \in \Lambda , \, n\in\nat} \subseteq \szlenkop{\alpha_\Lambda}$, hence $V_n\in \szlenkop{\alpha_\Lambda}$ also since each $V_n$ can be written as a (finite) sum of operators that factor some element of $\set{T_{\lambda , \, n}\mid \lambda \in \Lambda , \, n\in\nat}$. Moreover, we have $\norm{V_n -(\bigoplus_{\lambda \in \Lambda}T_\lambda)_p} \leqslant 1/n$ for each $n\in\nat$, hence $V_n \rightarrow (\bigoplus_{\lambda \in \Lambda}T_\lambda)_p$. As $V_n\in \szlenkop{\alpha_\Lambda}$ for all $n$ and $\szlenkop{\alpha_\Lambda}$ is closed (Theorem~\ref{idealthm}), $(\bigoplus_{\lambda \in \Lambda}T_\lambda)_p \in \szlenkop{\alpha_\Lambda}$. In particular, $\mszlenk{(\bigoplus_{\lambda \in \Lambda}T_\lambda)_p}\leqslant \omega^{\alpha_\Lambda}=\sup\{\mszlenk{T_\lambda}\mid\lambda \in \Lambda\}$. The reverse inequality follows by Theorem~\ref{idealthm} and the fact that $(\bigoplus_{\lambda \in \Lambda}T_\lambda)_p$ factors each of the operators $T_\lambda$, $\lambda \in \Lambda$. We have now shown (iii) $\Rightarrow$ (ii).

It is trivial that (ii) $\Rightarrow$ (i), so remains only to show that (i) $\Rightarrow$ (iii). To this end, suppose that (iii) does not hold. Then there exists $\delta > 0$ and an infinite set $\Lambda ' \subseteq \Lambda$ such that $\norm{T_\lambda} >\delta$ for all $\lambda \in \Lambda '$, and so $(\bigoplus_{\lambda \in \Lambda}T_\lambda)_p$ factors an isomorphic embedding of the non-Asplund space $\ell_p$. By Proposition~\ref{collection}(ii), $\mszlenk{(\bigoplus_{\lambda \in \Lambda}T_\lambda)_p} = \infty$.
\end{proof}

\subsection{$c_0$-direct sums and $\ell_p$-direct sums $(1<p<\infty)$}\label{canemace}

In this section we consider the Szlenk index of a direct sum operator $(\bigoplus_{\lambda \in \Lambda}T_\lambda)_p$, where $p=0$ or $1<p<\infty$. As in the cases $p=1$ and $p=\infty$, if $(\norm{T_\lambda})_{\lambda \in\Lambda} \in c_0 (\Lambda)$ then $\mszlenk{(\bigoplus_{\lambda \in \Lambda}T_\lambda)_p} = \sup\{\mszlenk{T_\lambda}\mid \lambda\in\Lambda\}$. However, the situation is not so clear if $(\norm{T_\lambda})_{\lambda \in\Lambda} \notin c_0 (\Lambda)$, and we demonstrate this by way of an example. For an ordinal $\gamma$, we may equip the ordinal $\gamma +1$ with its order topology, thereby making it a compact Hausdorff space. C.~Samuel has shown that for each $\alpha <\omega_1$, $\mszlenk{C(\omega^{\omega^\alpha}+1)} = \omega^{\alpha+1}$ (Samuel's calculation is found in \cite{Samuel1984}, however a more direct approach has been discovered by P.~{\hajek} and G.~Lancien \cite{H'ajek2007}). By the Bessaga-{\pelch} linear isomorphic classification of $C(K)$ spaces with $K$ countable \cite[Theorem~1]{Bessaga1960}, $C(\omega^n +1)$ is linearly isomorphic to $C(\omega+1)$ for all $0<n<\omega$. Thus, in particular, $\mszlenk{C(\omega^n +1)} = \mszlenk{C(\omega+1)} = \omega$ for all $0<n<\omega$. For each $0<n<\omega$, let $T_n$ denote the identity operator on $C(\omega^n +1)$. As $(\bigoplus_{0<n<\omega}C(\omega^n+1))_0$ is linearly isomorphic to $C(\omega^\omega+1)$, by Samuel's result we have
\[
\textstyle \mszlenk{(\bigoplus_{0<n<\omega}T_n)_0} = \mszlenk{C(\omega^\omega+1)} = \omega^2 >\omega = \sup\{\mszlenk{T_n}\mid 0<n<\omega\}\, .
\]
Thus the situation under consideration in this section is more subtle than the cases of $\ell_1$-direct sums and $\ell_\infty$-direct sums. Our goal is to determine precisely the Szlenk index of a $c_0$-direct sum or $\ell_p$-direct sum ($1<p<\infty$) of operators in terms of the overall behaviour of the $\eps$-Szlenk indices of the summand operators. To this end, we now introduce some notation.

Given a set $\Lambda$, a family of Banach spaces $\set{E_\lambda \mid \lambda \in \Lambda}$, a corresponding uniformly bounded family $\set{K_\lambda \subseteq E_\lambda\dual \mid \lambda \in \Lambda}$ of absolutely convex, $w\dual$-compact sets and $1\leqslant q <\infty$, we define
\[
B_q(K_\lambda \mid \lambda \in \Lambda):= \bigcup_{(a_\lambda)_{\lambda \in \Lambda} \in \cball{\ell_q(\Lambda)}} \prod_{\lambda\in\Lambda}a_\lambda K_\lambda \, ,
\]
and always consider $B_q(K_\lambda \mid \lambda \in \Lambda)$ as a subset of $(\bigoplus_{\lambda \in \Lambda}E_\lambda)_p\dual$, where $p$ is predual to $q$ (recall from Section~\ref{prelimlim} that $(\bigoplus_{\lambda \in\Lambda}E_\lambda)\dual_p$ is naturally identified with $(\bigoplus_{\lambda \in\Lambda}E_\lambda\dual)_q$). Such a set $B_q(K_\lambda \mid \lambda \in \Lambda)$ so defined is clearly bounded, and it is not difficult to see that it is also $w\dual$-compact. Indeed, for each $\lambda\in\Lambda$ define $T_\lambda : E_\lambda \longrightarrow C(K_\lambda )$ to be the map that sends $x\in E_\lambda$ to the continuous function $k \mapsto \functional{k}{x}$ ($k\in K_\lambda$). Then the {\kreinmilman} theorem, along with other classical results regarding extreme points (see, for example, \cite[Lemma~3.42]{Fabian2001} and \cite[Exercise~2.4]{Giles1982}), implies that $T_\lambda \dual \cball{C(K_\lambda )\dual} = K_\lambda$ for each $\lambda \in \Lambda$. Hence $B_q(K_\lambda \mid \lambda \in \Lambda) = (\bigoplus_{\lambda \in \Lambda}T_\lambda)_{p}\dual \cball{(\bigoplus_{\lambda \in \Lambda}C(K_\lambda ))_{p}\dual}$, ensuring the $w\dual$-compactness of $B_q(K_\lambda \mid \lambda \in \Lambda)$.

We first deal explicitly with the case where the Szlenk index of a direct sum of operators  has Szlenk index $\omega^0 = 1$. The following result describes the situation for this case.

\begin{prop}\label{compactbound}
Let $\Lambda$ be a set, $\set{E_\lambda \mid \lambda \in \Lambda}$ and $\set{F_\lambda \mid \lambda \in \Lambda}$ families of Banach spaces, $\set{T_\lambda \in \allop (E_\lambda , \, F_\lambda) \mid \lambda \in \Lambda}$ a uniformly bounded family of operators and $p\in \set{0} \cup [1, \, \infty]$. The following are equivalent:
\begin{itemize}
\item[{\rm (i)}] $\mszlenk{(\bigoplus_{\lambda \in \Lambda}T_\lambda)_p}=1$.
\item[{\rm (ii)}] $\mszlenk{T_\lambda} = 1$ for every $\lambda \in \Lambda$ and $(\norm{T_\lambda})_{\lambda \in \Lambda} \in c_0 (\Lambda)$.
\end{itemize}
\end{prop}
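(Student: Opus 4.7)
The plan is to reduce the problem to the characterisation of compact operators given in Proposition~\ref{collection}(iv), namely that $\mszlenk{T} = 1$ if and only if $T$ is compact. Thus (i) is equivalent to saying that $(\bigoplus_{\lambda \in \Lambda}T_\lambda)_p$ is compact, and it suffices to show this holds if and only if each $T_\lambda$ is compact and $(\norm{T_\lambda})_{\lambda \in \Lambda}\in c_0(\Lambda)$. This is a standard-type result about direct sums, but worth writing out cleanly.

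For the direction (i)$\Rightarrow$(ii), I would first note that for each $\lambda \in \Lambda$ the operator $T_\lambda$ can be written as $P_{\set{\lambda}}\circ (\bigoplus_{\mu \in \Lambda}T_\mu)_p \circ U_{\set{\lambda}}$, so compactness of the direct sum forces compactness of each $T_\lambda$, giving $\mszlenk{T_\lambda}=1$. To obtain the $c_0$-condition, I would argue by contradiction: if $(\norm{T_\lambda})_{\lambda \in \Lambda}\notin c_0(\Lambda)$ then there is $\delta >0$ and an infinite $\Lambda' \subseteq \Lambda$ with $\norm{T_\lambda}>\delta$ for all $\lambda \in \Lambda'$. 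Choose $x_\lambda \in \cball{E_\lambda}$ with $\norm{T_\lambda x_\lambda}>\delta$ and form the sequence $U_{\set{\lambda}}x_\lambda$ in the unit ball of the direct sum. The images $U_{\set{\lambda}}T_\lambda x_\lambda$ have pairwise disjoint support in $(\bigoplus_{\mu \in \Lambda}F_\mu)_p$, which forces any two of them to lie at distance greater than $\delta$ in each of the cases $p\in \set{0}\cup [1,\infty]$ (the verification is uniform across these cases, as the norm of a disjointly supported sum dominates each summand). This contradicts relative norm compactness of the image of the unit ball.

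For the direction (ii)$\Rightarrow$(i), I would approximate the direct sum operator by finite direct sums. Given $\eps >0$, use the $c_0$-condition to pick a finite set $\arr \subseteq \Lambda$ with $\norm{T_\lambda}<\eps$ for all $\lambda \in \Lambda\setminus \arr$. The operator $U_\arr \circ \bigl(\bigoplus_{\lambda \in \arr}T_\lambda\bigr)_p \circ P_\arr$ is a finite sum of operators each of which factors through a compact $T_\lambda$, hence is compact; and it differs from $(\bigoplus_{\lambda \in \Lambda}T_\lambda)_p$ by the operator $U_{\Lambda \setminus \arr}\circ \bigl(\bigoplus_{\lambda \in \Lambda\setminus \arr}T_\lambda\bigr)_p \circ P_{\Lambda \setminus \arr}$, whose norm equals $\sup_{\lambda \notin \arr}\norm{T_\lambda}\leqslant \eps$. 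Since the compact operators form a closed ideal, letting $\eps \to 0$ gives that $(\bigoplus_{\lambda \in \Lambda}T_\lambda)_p$ is compact, as required.

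Neither direction poses a genuine obstacle; the only subtlety is to observe that the disjoint-support argument in (i)$\Rightarrow$(ii) and the norm estimate in (ii)$\Rightarrow$(i) both work uniformly for $p\in \set{0}\cup [1,\infty]$, so the case analysis among these values of $p$ collapses to a single argument.
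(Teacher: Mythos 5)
Your proof is correct and follows essentially the same route as the paper: reduce to a compactness statement via Proposition~\ref{collection}(iv) (the paper isolates this as Proposition~\ref{compacteasy}), then prove that statement by factoring each $T_\lambda$ through the direct sum for one direction, and approximating by finitely supported direct sums using closedness of the compact operators for the other. The paper omits the details of the compactness equivalence, saying only that it is similar to the proof of Proposition~\ref{nonascase}; your write-up supplies those details (with the disjoint-support separation argument standing in for the paper's non-Asplund $\ell_p$-embedding argument, which would not apply verbatim for $p\in\set{0}\cup(1,\infty)$), and is accurate throughout.
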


Proposition~\ref{compactbound} follows immediately from Proposition~\ref{collection}(iv) and the following proposition.

\begin{prop}\label{compacteasy}
Let $\Lambda$ be a set, $\set{E_\lambda \mid \lambda \in \Lambda}$ and $\set{F_\lambda \mid \lambda \in \Lambda}$ families of Banach spaces, $\set{T_\lambda \in \allop (E_\lambda , \, F_\lambda) \mid \lambda \in \Lambda}$ a uniformly bounded family of operators and $p\in \set{0} \cup [1, \, \infty]$. The following are equivalent:
\begin{itemize}
\item[{\rm (i)}] $(\bigoplus_{\lambda \in \Lambda}T_\lambda)_p$ is compact.
\item[{\rm (ii)}] $T_\lambda$ is compact for every $\lambda \in \Lambda$ and $(\norm{T_\lambda})_{\lambda \in \Lambda} \in c_0 (\Lambda)$.
\end{itemize}
\end{prop}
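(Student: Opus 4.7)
The plan is to prove (ii) $\Rightarrow$ (i) by a finite-rank-style approximation that directly mirrors the argument used for (iii) $\Rightarrow$ (ii) in Proposition~\ref{nonascase}, and to prove (i) $\Rightarrow$ (ii) by exploiting the ideal property of $\compactop$ together with a separation argument.

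For (ii) $\Rightarrow$ (i), assume each $T_\lambda$ is compact and $(\norm{T_\lambda})_{\lambda \in \Lambda}\in c_0(\Lambda)$. For each $n\in\nat$, set $T_{\lambda,n} = T_\lambda$ if $\norm{T_\lambda} > 1/n$ and $T_{\lambda,n} = 0$ otherwise. Since $(\norm{T_\lambda})_{\lambda\in\Lambda}\in c_0(\Lambda)$, the set $\{\lambda : \norm{T_\lambda} > 1/n\}$ is finite, so $V_n := (\bigoplus_{\lambda\in\Lambda}T_{\lambda,n})_p$ factors through a finite direct sum of the compact operators $T_\lambda$ and is therefore compact (a finite sum of operators factoring compact operators is compact, since $\compactop$ is an operator ideal). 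Moreover, $\norm{V_n - (\bigoplus_{\lambda \in \Lambda}T_\lambda)_p}\leqslant 1/n$ for every $n$. Since $\compactop$ is a closed operator ideal, the norm limit $(\bigoplus_{\lambda \in \Lambda}T_\lambda)_p$ is compact.

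For (i) $\Rightarrow$ (ii), suppose $(\bigoplus_{\lambda \in \Lambda}T_\lambda)_p$ is compact. For each $\mu \in \Lambda$, the operator $T_\mu$ equals $P_{\{\mu\}}(\bigoplus_{\lambda \in \Lambda}T_\lambda)_p U_{\{\mu\}}$ (identifying one-element direct sums with the corresponding spaces), and therefore belongs to $\compactop$ by the ideal property. It remains to verify the $c_0$-condition. Suppose, for contradiction, that $(\norm{T_\lambda})_{\lambda\in\Lambda}\notin c_0(\Lambda)$. Then there exist $\delta >0$ and an infinite $\Lambda' \subseteq \Lambda$ with $\norm{T_\lambda} > \delta$ for every $\lambda \in \Lambda'$. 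Pick $x_\lambda \in \cball{E_\lambda}$ with $\norm{T_\lambda x_\lambda} > \delta$. For each $\lambda \in \Lambda'$, let $\tilde{x}_\lambda \in (\bigoplus_{\mu \in \Lambda}E_\mu)_p$ denote the element with $x_\lambda$ in coordinate $\lambda$ and $0$ elsewhere, so that $\norm{\tilde{x}_\lambda}\leqslant 1$. The images $(\bigoplus_{\mu \in \Lambda}T_\mu)_p \tilde{x}_\lambda$ are supported on distinct singletons, so for distinct $\lambda, \lambda' \in \Lambda'$ the distance between any two such images is at least $\delta$ (in the cases $p\in\{0,\infty\}$) or at least $\delta\cdot 2^{1/p}$ (when $1\leqslant p < \infty$). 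Thus $\{(\bigoplus_{\mu \in \Lambda}T_\mu)_p \tilde{x}_\lambda \mid \lambda\in\Lambda'\}$ is an infinite $\delta$-separated subset of $(\bigoplus_{\mu \in \Lambda}T_\mu)_p(\cball{(\bigoplus_{\mu \in \Lambda}E_\mu)_p})$, contradicting total boundedness (hence relative compactness) of this image.

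I expect no serious obstacle here: the only issue requiring a moment's care is the dependence of the separation constant on $p$, which is handled uniformly by noting that in every case it is bounded below by $\delta$. The argument is genuinely routine, and its brevity is what makes it worth recording as a stepping-stone to Proposition~\ref{compactbound} via Proposition~\ref{collection}(iv).
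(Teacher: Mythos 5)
Your proposal is correct and takes essentially the same approach the paper has in mind when it says the proof is ``similar to the proof of Proposition~\ref{nonascase}'': the truncation-plus-closed-ideal argument for (ii)~$\Rightarrow$~(i) is a verbatim transcription of the (iii)~$\Rightarrow$~(ii) step of Proposition~\ref{nonascase} with $\compactop$ in place of $\szlenkop{\alpha_\Lambda}$, and your $\delta$-separated net for (i)~$\Rightarrow$~(ii) is an equivalent (and, if anything, slightly more self-contained) way of phrasing the same obstruction that the factorisation-of-an-$\ell_p$-embedding argument in Proposition~\ref{nonascase} exploits. You are also right to note that, unlike in Proposition~\ref{nonascase}, the compactness of each summand $T_\mu$ is not hypothesised here and must be extracted via $T_\mu = P_{\{\mu\}}\big(\bigoplus_{\lambda}T_\lambda\big)_p U_{\{\mu\}}$ and the ideal property of $\compactop$.
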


We omit the straightforward proof of Proposition~\ref{compacteasy}, but note that it is similar to the proof of Proposition~\ref{nonascase} presented earlier.

The general case for $c_0$-direct sums and $\ell_p$-direct sums of operators, where $1<p<\infty$, will be deduced from the following key result.

\begin{prop}\label{unibound}
Let $\Lambda$ be a set, $\set{E_\lambda \mid \lambda \in \Lambda}$ a family of Banach spaces, $\set{ K_\lambda \subseteq E_\lambda\dual \mid \lambda \in \Lambda, \, K_\lambda \neq \emptyset}$ a uniformly bounded family of nonempty absolutely convex $w\dual$-compact sets, $\alpha >0$ an ordinal and $1\leqslant q < \infty$. The following are equivalent:
\begin{itemize}
\item[{\rm (i)}] $\mszlenk{B_q(K_\lambda \mid \lambda \in \Lambda)} \leqslant \omega^{\alpha}$.
\item[{\rm (ii)}] $\sup\{\meeszlenk{\eps}{K_\lambda}\mid \lambda \in \Lambda\}< \omega^\alpha$ for every $\eps >0$.
\item[{\rm (iii)}] $\sup\{\meeszlenk{\eps}{B_q(K_\lambda \mid \lambda \in \eff)}\mid \eff \in \Lambda\fin\}< \omega^\alpha$ for every $\eps > 0$.
\end{itemize}
\end{prop}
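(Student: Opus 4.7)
The plan is to prove the implications (i) $\Rightarrow$ (iii) $\Rightarrow$ (ii) $\Rightarrow$ (i). The first two implications are essentially formal, relying only on the definition of $B_q$ together with the fact that $\omega^\alpha$ (for $\alpha > 0$) is a limit ordinal while the $\eps$-Szlenk index of a nonempty $w\dual$-compact set is never a limit. The bulk of the work lies in (ii) $\Rightarrow$ (i), which will rely on the technical Lemma~\ref{frount} of Section~\ref{techsect}.

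For (i) $\Rightarrow$ (iii), I would observe that for each $\eff \in \Lambda\fin$, the dual of the canonical injection $U_\eff : (\bigoplus_{\lambda \in \eff}E_\lambda)_p \To (\bigoplus_{\lambda \in \Lambda}E_\lambda)_p$ identifies $B_q(K_\lambda \mid \lambda \in \eff)$ with a $w\dual$-compact subset of $B_q(K_\lambda \mid \lambda \in \Lambda)$. Monotonicity of $\meeszlenk{\eps}{\cdot}$ under such inclusions yields $\meeszlenk{\eps}{B_q(K_\lambda \mid \lambda \in \eff)} \leqslant \meeszlenk{\eps}{B_q(K_\lambda \mid \lambda \in \Lambda)}$ for each $\eff$ and each $\eps > 0$. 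By (i), $\meeszlenk{\eps}{B_q(K_\lambda \mid \lambda \in \Lambda)} \leqslant \omega^\alpha$, and since this index cannot itself be a limit ordinal whereas $\omega^\alpha$ is one, we obtain the strict inequality $\meeszlenk{\eps}{B_q(K_\lambda \mid \lambda \in \Lambda)} < \omega^\alpha$, which uniformly dominates the supremum in (iii).

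For (iii) $\Rightarrow$ (ii), I would specialise to the singleton $\eff = \set{\lambda}$: absolute convexity of $K_\lambda$ gives $B_q(K_{\lambda'} \mid \lambda' \in \set{\lambda}) = K_\lambda$, so for each $\eps > 0$ and $\lambda \in \Lambda$ we have $\meeszlenk{\eps}{K_\lambda} \leqslant \sup\{\meeszlenk{\eps}{B_q(K_\lambda \mid \lambda \in \eff)}\mid \eff \in \Lambda\fin\}$. The right-hand side is a single ordinal strictly below $\omega^\alpha$ by hypothesis (iii), and taking the sup over $\lambda$ preserves this strict inequality.

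The implication (ii) $\Rightarrow$ (i) is the substantive one. My plan is to fix $\eps > 0$ and produce an explicit ordinal $\beta_\eps < \omega^\alpha$, built from the data $\sup_\lambda \meeszlenk{\eps'}{K_\lambda}$ as $\eps'$ ranges over a sequence of auxiliary scales depending on $\eps$, $q$ and $\sup_\lambda \abs{K_\lambda}$, such that $\mdset{\beta_\eps}{\eps}{B_q(K_\lambda \mid \lambda \in \Lambda)} = \emptyset$. The main obstacle is that an $\eps$-diameter witness at a point of $B_q(K_\lambda \mid \lambda \in \Lambda)$ may be spread across arbitrarily many coordinates at once (this is precisely what distinguishes the present situation from the $\ell_1$- and $\ell_\infty$-cases handled in Proposition~\ref{nonascase}), so one cannot naively bound a derivative of $B_q(K_\lambda \mid \lambda \in \Lambda)$ in terms of a derivative of a single summand $K_\lambda$. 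The purpose of Lemma~\ref{frount} is to resolve exactly this obstacle: roughly, it allows one to localise, at the cost of passing to a quantitatively worse scale $\eps' < \eps$, each iterate of $s_\eps$ applied to $B_q(K_\lambda \mid \lambda \in \Lambda)$ to iterates of $s_{\eps'}$ applied to the individual $K_\lambda$. Once this localisation is available, the additive indecomposability of $\omega^\alpha$ (which ensures that finite ordinal sums and finite suprema of ordinals below $\omega^\alpha$ remain below $\omega^\alpha$) allows the uniform bound furnished by (ii) to be assembled into the desired ordinal $\beta_\eps$, giving (i).
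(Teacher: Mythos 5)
Your implications (i) $\Rightarrow$ (iii) and (iii) $\Rightarrow$ (ii) are correct and essentially the same formal observations the paper makes (it uses the injection $P_\eff\dual$ rather than the dual of $U_\eff$ to realise $B_q(K_\lambda \mid \lambda \in \eff)$ inside $B_q(K_\lambda \mid \lambda \in \Lambda)$, but the idea is identical), and your use of the limit-ordinal observation to upgrade $\leqslant\omega^\alpha$ to $<\omega^\alpha$ is exactly as in the paper's proof of (i) $\Rightarrow$ (ii).

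The gap is in (ii) $\Rightarrow$ (i). You describe Lemma~\ref{frount} as ``allowing one to localise each iterate of $s_\eps$ applied to $B_q(K_\lambda \mid \lambda \in \Lambda)$ to iterates applied to the individual $K_\lambda$,'' but this misreads the lemma: it is stated and proved only for a \emph{finite} collection $K_1,\ldots,K_n$, and the ordinal bound $\omega^\alpha\cdot M$ it produces (while independent of $n$, which is its point) bounds $\meeszlenk{\eps}{B_q(K_i\mid 1\leqslant i\leqslant n)}$, not $\meeszlenk{\eps}{B_q(K_\lambda\mid\lambda\in\Lambda)}$. What Lemma~\ref{frount} actually delivers from (ii) is precisely the intermediate statement (iii), namely a \emph{uniform} bound $<\omega^\alpha$ on $\meeszlenk{\eps}{B_q(K_\lambda\mid\lambda\in\eff)}$ over all finite $\eff$. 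There remains a genuinely separate step, the passage from (iii) to (i), which cannot be handled by additive indecomposability of $\omega^\alpha$ alone: the $\eps$-diameter witness that is ``spread across arbitrarily many coordinates'' you mention can in principle never concentrate in a finite block, so a localisation lemma for finite sums does not by itself control the infinite sum. The paper bridges this with the quantitative slicing Lemma~\ref{tvl} and its iterate Lemma~\ref{postdoc2}: these exploit the $\ell_q$-norm structure to show that each block of $\eta_\delta$ iterations of $s_\eps$ on $B_q(K_\lambda\mid\lambda\in\Lambda)$ forces a definite drop of $\left(\frac{\eps-\delta}{2}\right)^q$ in the $q$th power of the norm of surviving points, so that after $\sigma(\abs{K},\eps,\delta,q)$ blocks the derived set must be empty. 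This norm-drop argument is the crucial missing ingredient in your plan, and it is not substitutable by ordinal-arithmetic considerations.
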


To establish Proposition~\ref{unibound}, we prove (i) $\Rightarrow$ (ii) $\Rightarrow$ (iii) $\Rightarrow$ (i). In proving the implication (ii) $\Rightarrow$ (iii), we shall call upon the following technical result:

\begin{lemma}\label{frount}
Let $E_1, \ldots , E_n$ be Banach spaces, $K_1 \subseteq E_1\dual , \ldots , K_n\subseteq E_n\dual$ nonempty, absolutely convex, $w\dual$-compact sets, $1\leqslant q < \infty$ and $\eps >0$. Let $\displaystyle d=\max\{\diam (K_i) \mid 1\leqslant i \leqslant n\}$ and let $m$ and $ M$ be natural numbers such that $M\geqslant m\geqslant 2$ and $(2^q-1)\eps^qM\geqslant 8^q d^q (m-1)$. Suppose $\alpha$ is an ordinal such that $s^{\omega^\alpha \cdot M}_{\eps}(B_q(K_i\mid 1\leqslant i \leqslant n)) \neq \emptyset$. Then, for every $\delta \in (0, \, \eps /16)$ there is $i \leqslant n$ such that $\mdset{\omega^\alpha \cdot m}{\delta}{K_i} \neq \emptyset$.
\end{lemma}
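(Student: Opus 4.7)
The plan is to prove the contrapositive: assume $\mdset{\omega^\alpha \cdot m}{\delta}{K_i} = \emptyset$ for every $i \leqslant n$, and deduce $\mdset{\omega^\alpha \cdot M}{\eps}{B_q(K_i \mid 1 \leqslant i \leqslant n)} = \emptyset$. Every element of $B_q(K_i \mid 1 \leqslant i \leqslant n)$ admits a (nonunique) representation $(a_i \xi_i)_{i=1}^n$ with $\xi_i \in K_i$ and $(a_i) \in \cball{\ell_q}$, and the key observation is that a norm-perturbation of size exceeding $\eps$ in $B_q$ must come either from perturbing some inner factor $\xi_i$ or from rearranging the scalar weights $a_i$; the latter is constrained by the $\ell_q$-budget $\sum |a_i|^q \leqslant 1$. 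The hypothesis $(2^q - 1)\eps^q M \geqslant 8^q d^q (m-1)$ is calibrated so that the coefficient budget is exhausted well before $M$ derivations have been carried out.

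The first step is a \emph{local dichotomy}: for $x = (a_i \xi_i) \in \mdset{\omega^\alpha}{\eps}{B_q(K_i \mid 1 \leqslant i \leqslant n)}$, examination of a $w\dual$-neighbourhood of $x$ of diameter greater than $\eps$ and an $\ell_q$-balancing across coordinates yields that either (a) some coordinate $i$ admits, after a suitable re-representation, a point $\xi_i' \in \mdset{\omega^\alpha}{\delta}{K_i}$ fitting into a valid representation of a nearby point of $B_q$, or (b) the coefficient vector $(a_i)$ moves by at least a fixed amount of order $\eps/d$ in $\ell_q$-norm between representations. The constants $8$, $16$ and $2^q - 1$ are precisely what is needed to absorb the slack between the two regimes and to support the loss from $\eps$ on the sum down to $\delta < \eps/16$ on the coordinate.

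Given this dichotomy, one iterates it down through the derivation hierarchy of $B_q$ starting from a point of $\mdset{\omega^\alpha \cdot M}{\eps}{B_q(K_i \mid 1 \leqslant i \leqslant n)}$ and builds, by transfinite recursion across the $M$ blocks of length $\omega^\alpha$, a descending chain together with a record of which blocks are of coordinate-type and which are of coefficient-type. Summing the $\ell_q$-costs of the coefficient-type blocks against the unit $\ell_q$-budget bounds their number by at most $\lfloor 8^q d^q (m-1)/((2^q-1)\eps^q)\rfloor \leqslant M - (m-1)$, so at least $m$ blocks are of coordinate-type. By using the non-uniqueness of the representation to consistently replan the ``active'' coordinate across the coefficient-type blocks, one arranges these coordinate-blocks to concentrate on a single coordinate $i$, yielding an $\omega^\alpha \cdot m$-deep descent in $K_i$ at scale $\delta$ and contradicting the standing assumption.

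The main obstacle will be the local dichotomy: because $\mdset{\omega^\alpha}{\eps}{\cdot}$ is defined by transfinite recursion rather than a single-neighbourhood condition, extracting from one $\omega^\alpha$-derivation on $B_q$ either a genuine $\omega^\alpha$-derivation on some $K_i$ or a quantifiable $\ell_q$-cost requires a secondary transfinite induction. This inner induction should exploit the additive indecomposability of $\omega^\alpha$, so that the accumulated coefficient-perturbations --- whose cost is always bounded below by a fixed positive quantity --- cannot mimic a coordinate-derivation of the full indecomposable height. Getting the bookkeeping to interact correctly with limit stages, where the neighbourhood condition is replaced by intersection, is the most delicate point of the argument.
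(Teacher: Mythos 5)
Your contrapositive-plus-budget strategy is the right instinct, but the two steps you yourself flag as delicate are precisely where the argument is incomplete, and the ``local dichotomy'' is not correct as stated. A single $s_\eps$-derivation of a product $\prod a_iK_i$ does not split into a coordinate step on one $K_i$ versus a coefficient rearrangement: the $\eps$-perturbation is distributed across \emph{all} $n$ coordinates, weighted by the $a_i$'s. The correct statement (Lemma~\ref{techlem1}) is that $s_\eps\bigl(\prod a_iK_i\bigr)$ is covered by products $\prod a_is_{\eps_i}(K_i)$ with $(\eps_i)$ ranging over $A_\delta=\{(\eps_i):\sum a_i^q\eps_i^q\geqslant\delta^q,\ 0\leqslant\eps_i\leqslant\diam(K_i)\}$. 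Iterating this through $\omega^\alpha\cdot M$ derivations (Lemma~\ref{techlem2}) requires a transfinite induction in which the budget must first be \emph{discretized} to a finite set of quantized $\overline\eps_i$'s so that a cofinality argument can pass through limit stages; your appeal to additive indecomposability is the right instinct but no mechanism is supplied. Moreover, you work directly with $B_q(K_i\mid 1\leqslant i\leqslant n)$, where a point has a continuum of representations $(a_i\xi_i)$, so ``which coefficient vector the descent is at'' is not well-defined along a derivation sequence. The paper avoids this by first enclosing $B_q$ in a finite union of \emph{fixed} products $\prod\frac{k_i}{l}K_i$ (Lemma~\ref{lecondsast}), reducing to one of them at the cost $\eps\mapsto\eps/2$ via Lemma~\ref{unionlemma}(i), and then rescaling homogeneously to restore the $\ell_q^n$-ball constraint.

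The more serious gap is the step ``by using the non-uniqueness of the representation to consistently replan the active coordinate across the coefficient-type blocks, one arranges these coordinate-blocks to concentrate on a single coordinate $i$.'' No mechanism for this is given, and I do not see one; you also need at least $m$ coordinate-type blocks but your bound on coefficient-type blocks gives only $M-(m-1)$, which leaves $m-1$, and that bound itself would require the stronger hypothesis $(2^q-1)\eps^q(M-m+1)\geqslant 8^qd^qm$ rather than what is assumed. The paper requires no replanning at all: once the constraints $(\eps_{i,j})_i\in A_{\eps/4}$ for $j=1,\ldots,M$ are in hand, summing over $j$ gives $\sum_ia_i^q\bigl(\sum_j\eps_{i,j}^q\bigr)\geqslant M\eps^q/4^q$, and since $\sum_ia_i^q\leqslant1$ a pigeonhole yields a single coordinate $h$ with $\sum_j\eps_{h,j}^q\geqslant M\eps^q/4^q$; the hypothesis $(2^q-1)\eps^qM\geqslant8^qd^q(m-1)$ is calibrated exactly so that then either $m$ of the $\eps_{h,j}$ exceed $\eps/8$, or one exceeds $d$, which makes the product empty (Lemma~\ref{techlema}). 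This is an \emph{a posteriori} pigeonhole on a fixed coordinate, not a dynamic replanning during the descent, and it is the decomposition you would need to recover before the sketch could be completed.
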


The proof of Lemma~\ref{frount} is delayed until Section~\ref{techsect}. To show (iii) $\Rightarrow $ (i) we require the following discrete variant of \cite[Lemma~3.3]{H'ajek2007}:

\begin{lemma}\label{tvl}
Let $\Lambda$ be a set, $(E_\lambda)_{\lambda \in \Lambda}$ a family of Banach spaces, $1\leqslant q<\infty$, $p$ predual to $q$ and $K \subseteq (\bigoplus_{\lambda \in \Lambda}E_\lambda)_{p}\dual $ nonempty and $w\dual$-compact. Let $\alpha$ be an ordinal, $\arr \subseteq \Lambda$ and $\eps > \delta >0$. If $x \in \mdset{\alpha}{\eps}{K}$ and $\norm{U_\arr\dual \, x}^{q} > \abs{K}^q-(\frac{\eps - \delta}{2})^q$, then $U_\arr\dual\, x \in \mdset{\alpha}{\delta}{U_\arr\dual\, K}$.
\end{lemma}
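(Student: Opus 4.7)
The plan is to proceed by transfinite induction on $\alpha$. The case $\alpha = 0$ is trivial: $x \in K = \mdset{0}{\eps}{K}$ gives $U_\arr\dual x \in U_\arr\dual K = \mdset{0}{\delta}{U_\arr\dual K}$. For a limit ordinal $\alpha$, the identity $\mdset{\alpha}{\eps}{K} = \bigcap_{\beta < \alpha}\mdset{\beta}{\eps}{K}$, combined with the inductive hypothesis applied to each $\beta < \alpha$ (with the same $x$, $\eps$, $\delta$), yields $U_\arr\dual x \in \mdset{\beta}{\delta}{U_\arr\dual K}$ for all $\beta < \alpha$, and hence $U_\arr\dual x \in \mdset{\alpha}{\delta}{U_\arr\dual K}$.

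The successor step $\alpha = \beta + 1$ is the core of the argument. Assume the result for $\beta$ and let $x \in \mdset{\beta+1}{\eps}{K}$ satisfy $\norm{U_\arr\dual x}^q > \abs{K}^q - ((\eps-\delta)/2)^q$. Given an arbitrary $w\dual$-open neighborhood $V$ of $U_\arr\dual x$, I must produce two points of $\mdset{\beta}{\delta}{U_\arr\dual K} \cap V$ at distance exceeding $\delta$. To make the inductive hypothesis applicable to nearby points, observe that $y \mapsto \norm{U_\arr\dual y}$ is $w\dual$-lower semicontinuous on $(\bigoplus_{\lambda \in \Lambda}E_\lambda)\dual_p$ (as the composition of the $w\dual$-to-$w\dual$ continuous adjoint $U_\arr\dual$ with the $w\dual$-l.s.c. dual norm), so
\[
W_0 = \set{y \in (\bigoplus_{\lambda \in \Lambda}E_\lambda)\dual_p \mid \norm{U_\arr\dual y}^q > \abs{K}^q - ((\eps-\delta)/2)^q}
\]
is $w\dual$-open and contains $x$. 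Set $W = W_0 \cap (U_\arr\dual)^{-1}(V)$, a $w\dual$-open neighborhood of $x$.

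Since $x \in \mdset{\beta+1}{\eps}{K}$, there exist $y, z \in \mdset{\beta}{\eps}{K} \cap W$ with $\norm{y-z} > \eps$. The inclusion $W \subseteq W_0$ ensures that both $y$ and $z$ meet the large-projection hypothesis, so the inductive hypothesis delivers $U_\arr\dual y,\, U_\arr\dual z \in \mdset{\beta}{\delta}{U_\arr\dual K}$; both also lie in $V$ because $W \subseteq (U_\arr\dual)^{-1}(V)$. Under the isometric decomposition $(\bigoplus_{\lambda \in \Lambda}E_\lambda)\dual_p = (\bigoplus_{\lambda \in \arr}E_\lambda\dual)_q \oplus_q (\bigoplus_{\lambda \in \Lambda\setminus\arr}E_\lambda\dual)_q$, the defining condition of $W_0$ together with $\norm{y}^q, \norm{z}^q \leqslant \abs{K}^q$ forces $\norm{U_{\Lambda\setminus\arr}\dual y},\, \norm{U_{\Lambda\setminus\arr}\dual z} < (\eps-\delta)/2$, whence $\norm{U_{\Lambda\setminus\arr}\dual(y - z)} < \eps - \delta$. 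Combining this with $\norm{y - z} \leqslant \norm{U_\arr\dual(y - z)} + \norm{U_{\Lambda\setminus\arr}\dual(y - z)}$ yields $\norm{U_\arr\dual y - U_\arr\dual z} > \delta$. Therefore $\diam(\mdset{\beta}{\delta}{U_\arr\dual K} \cap V) > \delta$, and since $V$ was arbitrary, $U_\arr\dual x \in \mdset{\beta+1}{\delta}{U_\arr\dual K}$.

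The main obstacle is that the inductive hypothesis is conditional on a large-projection property of a single point, whereas establishing $U_\arr\dual x \in \mdset{\beta+1}{\delta}{U_\arr\dual K}$ requires information about many nearby points. The device of intersecting with $W_0$, made available by $w\dual$-lower semicontinuity of $\norm{U_\arr\dual\,\cdot\,}$, propagates the large-projection condition to a full $w\dual$-neighborhood of $x$; once this is in hand, the $\ell_q$ direct-sum identity converts the ambient diameter lower bound into the required projected one in a routine way.
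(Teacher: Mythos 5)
Your proof is correct and uses essentially the same approach as the paper, which merely runs the successor step contrapositively: assuming $U_\arr\dual x \notin \mdset{\beta+1}{\delta}{U_\arr\dual K}$, it shrinks the witnessing $w\dual$-neighbourhood of $U_\arr\dual x$ to avoid the $w\dual$-closed ball of radius $\big(\abs{K}^q-(\frac{\eps-\delta}{2})^q\big)^{1/q}$, and this ball-avoidance plays exactly the role of your $w\dual$-open set $W_0$. The one small point to add to your direct version is the remark that $U_\arr\dual x \in \mdset{\beta}{\delta}{U_\arr\dual K}$ (required, since $s_\delta$ only selects points already in the set being differentiated), which follows at once from the inductive hypothesis applied to $x\in\mdset{\beta}{\eps}{K}$.
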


\begin{proof}
We fix $\eps$, $\delta$ and $\arr$ and proceed by induction on $\alpha$. The conclusion of the lemma is trivially true for $\alpha = 0$. So suppose that $\beta$ is an ordinal such that the conclusion of the lemma holds with $\alpha = \beta$; we show that it holds then also for $\alpha = \beta +1$. To this end, let $x \in K$ be such that $\norm{U_\arr\dual \, x}^{q} > \abs{K}^q-(\frac{\eps - \delta}{2})^q$ and $U_\arr\dual\, x \notin \mdset{\beta+1}{\delta}{U_\arr\dual \, K}$. Our goal is to show that $x \notin \mdset{\beta +1}{\eps}{K}$, so we may assume that $x \in \mdset{\beta}{\eps}{K}$, hence $U_\arr\dual\, x \in \mdset{\beta}{\delta}{U_\arr\dual \, K}$ by the inductive hypothesis. It follows that there is $w\dual$-open $V \subseteq (\bigoplus_{\lambda \in \arr}E_\lambda)_{p}\dual $ such that $U_\arr\dual\, x \in V$ and $d:= \diam (V\cap \mdset{\beta}{\delta }{U_\arr\dual\, K}) \leqslant \delta $.
As $U_\arr\dual\, x$ does not belong to the $w\dual$-closed set $(\abs{K}^q-(\frac{\eps - \delta}{2})^q)^{1/q}\cball{(\bigoplus_{\lambda \in \arr}E_\lambda)_{p}\dual}$, we may assume \[ \textstyle V \cap \big(\abs{K}^q-(\frac{\eps - \delta}{2})^q\big)^{1/q}\cball{(\bigoplus_{\lambda \in \arr}E_\lambda)_{p}\dual} = \emptyset .\] Let $W = (U_\arr\dual)^{-1} (V)$ and let $u \in W \cap \mdset{\beta}{\eps}{K}$. Then $\norm{U_\arr\dual \, u}^{q} > \abs{K}^q-(\frac{\eps - \delta}{2})^q$ and $u \in \mdset{\beta}{\eps}{K}$, hence by the induction hypothesis $U_\arr\dual \, u \in V \cap \mdset{\beta}{\delta}{U_\arr\dual \, K}$. So for $u_1 , \, u_2 \in W \cap \mdset{\beta}{\eps}{K}$ we have $\norm{U_\arr \dual \, u_1 - U_\arr \dual \, u_2}^{q} \leqslant d^{q} \leqslant \delta^q$. Moreover, since $\norm{U_\arr \dual \, u_1 }^q > \abs{K}^q-(\frac{\eps - \delta}{2})^q$ it follows that
\[
\norm{u_1 - P_\arr\dual U_\arr\dual u_1} \leqslant \left( \abs{K}^q - \norm{P_\arr\dual U_\arr\dual u_1}^q \right)^{1/q} = \left( \abs{K}^q - \norm{U_\arr\dual u_1}^q \right)^{1/q} < \frac{\eps - \delta}{2}\, .
\]
Similarly, $\norm{u_2 - P_\arr\dual U_\arr\dual u_2} < \frac{\eps - \delta}{2}$. We now deduce that
\begin{align*}
\Vert u_1 -& u_2\Vert^{q}\\ &= \norm{P_\arr\dual U_\arr \dual \, u_1 - P_\arr\dual U_\arr \dual \, u_2}^q + \norm{(u_1 - P_\arr\dual U_\arr \dual \, u_1) -(u_2 - P_\arr\dual U_\arr \dual \, u_2)}^q \\ & \leqslant \norm{U_\arr \dual \, u_1 - U_\arr \dual \, u_2}^q + \left( 2\cdot \frac{\eps - \delta}{2}\right)^q \\ &\leqslant \delta^q + (\eps-\delta)^q \\ &\leqslant \eps^q \, .
\end{align*}
In particular, $\diam (W\cap \mdset{\beta}{\eps}{K})\leqslant \eps$. It follows that $x \notin  \mdset{\beta +1}{\eps}{K}$, as desired.

The lemma passes easily to limit ordinals, so we are done.
\end{proof}

In order to state the third (and final) lemma required in the proof of Proposition~\ref{unibound}, we give the following definition.

\begin{defn}\label{postdoc1}
For real numbers $a\geqslant 0$, $b>c>0$ and $1\leqslant d<\infty$, define
\[
\sigma (a,b,c,d):= \inf \bigg\{ n\in \nat \,\, \bigg\vert \,\,  n \geqslant \bigg(\frac{2a}{b-c}\bigg)^{\! d} - \bigg(\frac{b}{b-c}\bigg)^{\! d}+1\bigg\} \, .
\]
\end{defn}

With regards to Definition~\ref{postdoc1}, note that $\sigma (a,b,c,d)=1$ whenever $2a\leqslant b$.

\begin{lemma}\label{postdoc2}
Let $\Lambda$ be a set, $\{ E_\lambda \mid \lambda\in\Lambda \}$ a family of Banach spaces, $1\leqslant q <\infty$, $p$ predual to $q$, $K\subseteq (\bigoplus_{\lambda\in\Lambda}E_\lambda )_p\dual$ a nonempty, $w\dual$-compact set and $\eps>\delta >0$. Suppose $\eta_\delta $ is a nonzero ordinal such that $s_\delta^{\eta_\delta}(U_\eff\dual K) = \emptyset$ for every $\eff\in\Lambda\fin$. Then $s_\eps^{\eta_\delta \cdot \sigma (\vert K\vert , \eps , \delta , q)} = \emptyset$, hence ${\rm Sz}_\eps (K)\leqslant \eta_\delta \cdot \sigma (\vert K\vert , \eps , \delta , q)$.
\end{lemma}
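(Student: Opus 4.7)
The approach is iterative: set $K_0 := K$ and $K_{n+1} := \mdset{\eta_\delta}{\eps}{K_n}$, so that $K_n = \mdset{\eta_\delta\cdot n}{\eps}{K}$. The heart of the argument is the quantitative estimate
\[
\abs{K_n}^q \leqslant \abs{K}^q - n\Big(\frac{\eps - \delta}{2}\Big)^{\!q} \qquad (n\geqslant 0),
\]
which I prove by induction on $n$ (with the convention $\abs{K_n}=0$ when $K_n=\emptyset$).

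The base case $n=0$ is immediate. For the inductive step, let $x\in K_{n+1}$ and suppose toward a contradiction that $\norm{x}^q > \abs{K_n}^q - (\tfrac{\eps-\delta}{2})^q$. Identifying $x$ with an element of $(\bigoplus_\lambda E_\lambda\dual)_q$, one has $\norm{U_\eff\dual x}^q=\sum_{\lambda\in\eff}\norm{x_\lambda}^q$, and hence $\norm{x}^q=\sup\{\norm{U_\eff\dual x}^q:\eff\in\Lambda\fin\}$. So some finite $\eff\subseteq\Lambda$ satisfies $\norm{U_\eff\dual x}^q>\abs{K_n}^q-(\tfrac{\eps-\delta}{2})^q$. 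Applying Lemma~\ref{tvl} with $K$ replaced by $K_n$ and $\alpha=\eta_\delta$ yields $U_\eff\dual x\in \mdset{\eta_\delta}{\delta}{U_\eff\dual K_n}$. A routine transfinite induction shows that $\mdset{\alpha}{\delta}{\cdot}$ is monotone with respect to set inclusion (the base case $s_\delta$ is immediate, and successor and limit steps are straightforward), so $\mdset{\eta_\delta}{\delta}{U_\eff\dual K_n}\subseteq \mdset{\eta_\delta}{\delta}{U_\eff\dual K}=\emptyset$ by hypothesis on $\eta_\delta$, a contradiction. Therefore $\norm{x}^q\leqslant \abs{K_n}^q-(\tfrac{\eps-\delta}{2})^q$, giving $\abs{K_{n+1}}^q\leqslant \abs{K_n}^q-(\tfrac{\eps-\delta}{2})^q$ and completing the induction.

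With the estimate in hand, set $\sigma:=\sigma(\abs{K},\eps,\delta,q)$. The defining inequality $\sigma\geqslant(2\abs{K}/(\eps-\delta))^q-(\eps/(\eps-\delta))^q+1$ rearranges, upon multiplying by $((\eps-\delta)/2)^q$, to $(\sigma-1)(\tfrac{\eps-\delta}{2})^q\geqslant \abs{K}^q-(\eps/2)^q$. Substituting into the estimate at $n=\sigma-1$ gives $\abs{K_{\sigma-1}}^q\leqslant (\eps/2)^q$, so $\diam(K_{\sigma-1})\leqslant 2\abs{K_{\sigma-1}}\leqslant \eps$, and therefore $\mdset{\mbox{}}{\eps}{K_{\sigma-1}}=\emptyset$. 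Since $\eta_\delta\geqslant 1$, iterating further yields $K_\sigma=\mdset{\eta_\delta}{\eps}{K_{\sigma-1}}=\emptyset$, equivalently $\mdset{\eta_\delta\cdot\sigma}{\eps}{K}=\emptyset$, which gives the stated bound on $\meeszlenk{\eps}{K}$.

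The substantive work has already been carried out in Lemma~\ref{tvl}; here the two points requiring care are the extraction of a finite $\eff$ with $\norm{U_\eff\dual x}^q$ large enough to trigger Lemma~\ref{tvl}, and the set-monotonicity argument used to land inside the empty derivation $\mdset{\eta_\delta}{\delta}{U_\eff\dual K}$.
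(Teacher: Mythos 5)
Your proof is correct and follows essentially the same route as the paper: both establish by induction the quantitative estimate $\abs{\mdset{\eta_\delta\cdot n}{\eps}{K}}^q \leqslant \abs{K}^q - n\left(\tfrac{\eps-\delta}{2}\right)^q$ via Lemma~\ref{tvl} and then deduce from Definition~\ref{postdoc1} that the set $\mdset{\eta_\delta\cdot(\sigma-1)}{\eps}{K}$ has diameter at most $\eps$. The only cosmetic difference is that you run the inductive step as a contradiction argument (extracting a single finite $\eff$ with $\norm{U_\eff\dual x}$ large), whereas the paper shows directly that $\norm{U_\eff\dual x}^q$ is small for every $\eff$ and then takes the supremum; the two are logically interchangeable.
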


\begin{proof} We \emph{claim} that for each $n<\omega$, either $s_{\eps}^{\eta_\delta \cdot n}(K)$ is empty or
\begin{equation}\label{fewmorethings}
\abs{s_{\eps}^{\eta_\delta \cdot n}(K)}^q \leqslant  \abs{K}^q -n\left( \frac{\eps-\delta}{2} \right)^q \, .
\end{equation}
To prove the claim, we proceed by induction on $n$. (\ref{fewmorethings}) holds trivially for $n=0$. Suppose the claim holds for $n=m$; we will show that it holds for $n=m+1$. For every $\eff\in\Lambda\fin$ we have
\begin{equation}\label{evenmore}
s_\delta^{\eta_\delta}( U_\eff\dual s_\eps^{\eta_\delta \cdot (m+1)}(K)) \subseteq s_\delta^{\eta_\delta}(U_\eff\dual K) = \emptyset\, .
\end{equation}
If $s_\eps^{\eta_\delta \cdot m} (K)= \emptyset$, we are done. Otherwise, by the induction hypothesis,
\begin{equation}\label{yetmore}
\abs{s_\eps^{\eta_\delta \cdot (m+1)} (K)}^q \leqslant  \abs{K}^q -m\left( \frac{\eps-\delta}{2}\right)^q \, .
\end{equation}
If $s_\eps^{\eta_\delta \cdot (m+1)} (K) \neq \emptyset$, then applying (\ref{evenmore}), (\ref{yetmore}) and Lemma~\ref{tvl} implies that for every $x\in s_\eps^{\eta_\delta \cdot (m+1)} (K)$ and $\eff \in \Lambda\fin$, we have
\[
\norm{U_\eff \dual \, x}^q \leqslant \abs{K}^q -m\left( \frac{\eps-\delta}{2} \right)^q - \left( \frac{\eps-\delta}{2}\right)^q = \abs{K}^q -(m+1)\left( \frac{\eps-\delta}{2}\right)^q \, .
\]
Thus $x\in s_\eps^{\eta_\delta \cdot (m+1)} (K)$ implies
\[
\norm{x}^q = \sup\{\norm{U_\eff\dual \, x}^q \mid \eff \in \Lambda\fin\}\leqslant \abs{K}^q -(m+1)\left(\frac{\eps-\delta}{2}\right)^q\, ,
\]
and so (\ref{fewmorethings}) holds for $n=m+1$. The inductive proof of the claim is complete.

By definition (precisely, Definition~\ref{postdoc1}), we have
\begin{equation}\label{postdoc3}
\abs{K}^q - (\sigma (\abs{K}, \eps, \delta , q)-1)\left(\frac{\eps-\delta}{2}\right)^q \leqslant \left(\frac{\eps}{2}\right)^q\, .
\end{equation}
Thus, by (\ref{postdoc3}) and the claim proved above we have
\[
\diam (s_\eps^{\eta_\delta \cdot (\sigma (\abs{K},\eps,\delta,q)-1)}(K)) \leqslant 2\cdot \frac{\eps}{2} = \eps\, ,
\]
and we thus deduce that
\[
s_\eps^{\eta_\delta \cdot \sigma (\abs{K},\eps,\delta,q)}(K) \subseteq s_\eps^{\eta_\delta \cdot (\sigma (\abs{K},\eps,\delta,q)-1)+1}(K) = s_\eps (s_\eps^{\eta_\delta \cdot (\sigma (\abs{K},\eps,\delta,q)-1)}(K)) = \emptyset \, .\qedhere
\]
\end{proof}

We now give the proof of Proposition~\ref{unibound}, assuming Lemma~\ref{frount}.

\begin{altproof3} We prove (i) $\Rightarrow$ (ii) $\Rightarrow$ (iii) $\Rightarrow$ (i). Throughout, $p$ shall denote the real number predual to $q$.

To show (i) $\Rightarrow$ (ii), suppose by way of a contraposition that there is $\eps >0$ such that $\sup\{ \meeszlenk{\eps}{K_\lambda}\mid \lambda \in \Lambda\}\geqslant \omega^\alpha$. For each $\lambda' \in \Lambda$, the restriction $P_{\set{\lambda'}}\dual |_{{K_{\lambda'}}}$ is a norm-isometric, $w\dual$-homeomorphic embedding of $K_{\lambda'}$ into $B_q(K_\lambda \mid \lambda \in \Lambda)$, hence $\meeszlenk{\delta}{B_q(K_\lambda \mid \lambda \in \Lambda)} \geqslant \meeszlenk{\delta}{K_{\lambda'}}$ for all $\delta >0$ and $\lambda' \in \Lambda$. Thus
\begin{equation}\label{famcommday}
\meeszlenk{\eps}{B_q(K_\lambda \mid \lambda \in \Lambda)} \geqslant \sup\{ \meeszlenk{\eps}{K_\lambda} \mid \lambda \in \Lambda\} \geqslant \omega^\alpha .
\end{equation}
As $\meeszlenk{\eps}{B_q(K_\lambda \mid \lambda \in \Lambda)}$ cannot be a limit ordinal, we deduce from (\ref{famcommday}) that \[ \mszlenk{B_q(K_\lambda \mid \lambda \in \Lambda)} \geqslant \meeszlenk{\eps}{B_q(K_\lambda \mid \lambda \in \Lambda)} > \omega^\alpha .\] This proves (i) $\Rightarrow$ (ii).

Suppose (ii) holds. For each $\eps >0$ let $1<m_\eps <\omega$ and $\beta_\eps  < \alpha$ be such that $ \sup\{ \meeszlenk{\eps/32}{K_\lambda}\mid \lambda \in \Lambda\}< \omega^{\beta_{\eps} \cdot m_\eps }$. Set $d= \sup\{ \diam (K_\lambda)\mid \lambda \in \Lambda\}$ and for each $\eps \in (0, \, 1)$ let $M_\eps \in \nat$ be such that $(2^q-1)\eps^q {M_\eps} \geqslant 8^qd^q(m_\eps -1)$. By Lemma~\ref{frount}, for $\eff \in \Lambda\fin$ we have $\meeszlenk{\eps}{B_q(K_\lambda \mid \lambda \in \eff)} <\omega^{\beta_{\eps} }\cdot {M_\eps}$, hence
\[
\sup\{\meeszlenk{\eps}{B_q(K_\lambda \mid \lambda \in \Lambda)}\mid \eff \in \Lambda\fin\} \leqslant \omega^{\beta_{\eps} }\cdot {M_\eps} < \omega^\alpha .
\]
Thus, (ii) $\Rightarrow$ (iii).

Suppose that (iii) holds. As $U_\eff\dual B_q (K_\lambda \mid \lambda \in \Lambda) = B_q(K_\lambda \mid \lambda\in\eff)$ for each $\eff\in\Lambda\fin$, applying Lemma~\ref{postdoc2} with $K= B_q (K_\lambda \mid \lambda \in \Lambda) $, $\delta =\delta(\eps) = \eps/2$ and $\eta_{\delta (\eps)}= \sup \{ {\rm Sz}_{\eps/2}(U_\eff\dual B_q (K_\lambda \mid \lambda \in \Lambda) )\mid \eff\in\Lambda\fin \} \, \, (<\omega^\alpha)$ yields
\begin{align*}
{\rm Sz}(B_q (K_\lambda \mid \lambda \in \Lambda) ) &= \sup \{{\rm Sz}_\eps (B_q (K_\lambda \mid \lambda \in \Lambda)) \mid \eps >0 \}\\ &\leqslant \sup \big\{\eta_{\delta (\eps)}\cdot \sigma (\sup \{ \vert K_\lambda \vert \mid \lambda \in \Lambda \},\, \eps, \, \eps/2,\, q) \, \big\vert \, \eps >0 \big\}\\ &\leqslant \omega^\alpha \, ,
\end{align*}
hence (iii)$\Rightarrow$(i).
\end{altproof3}

\begin{remark}
The idea that an iterated implementation of Lemma~\ref{tvl} (c.f. Lemma~\ref{postdoc2} and its proof) might be used to prove the implication (iii)$\Rightarrow$(i) in the proof of Proposition~\ref{unibound} was essentially suggested to the author by Professor Gilles Lancien; previous versions of the main results of this chapter used a slightly different argument (also using Lemma~\ref{tvl}, but just a single direct application) and required the additional hypothesis that $K_\lambda = \cball{E_\lambda\dual}$ for all $\lambda$ (see Theorem~\ref{accunibound}).\end{remark}

The following result, along with Proposition~\ref{compactbound}, determines precisely the Szlenk index of a $c_0$-direct sum or $\ell_p$-direct sum of operators ($1<p<\infty$) in terms of properties of the $\eps$-Szlenk indices of the summands.

\begin{thm}\label{punibound}
Let $\Lambda$ be a set, $\set{E_\lambda \mid \lambda \in \Lambda}$ and $\set{F_\lambda \mid \lambda \in \Lambda}$ families of Banach spaces, $\set{T_\lambda :E_\lambda \To F_\lambda \mid \lambda \in \Lambda}$ a uniformly bounded family of Asplund operators, $\alpha >0$ an ordinal and $p=0$ or $ 1<p< \infty$. The following are equivalent:
\begin{itemize}
\item[{\rm (i)}] ${\rm Sz}\big( (\bigoplus_{\lambda \in \Lambda}T_\lambda)_p\big) \leqslant \omega^{\alpha}$.
\item[{\rm (ii)}] $\sup\{ \meeszlenk{\eps}{T_\lambda}\mid \lambda \in \Lambda\}< \omega^\alpha$ for all $\eps >0$.
\end{itemize}
It follows that if $T$ is noncompact, then \[ \textstyle {\rm Sz}\big( (\bigoplus_{\lambda \in \Lambda}T_\lambda)_p\big) = \inf \set{\omega^\alpha \mid \sup\{ \meeszlenk{\eps}{T_\lambda}\mid \lambda \in \Lambda\}< \omega^\alpha \mbox{ for all }\eps >0} .\]
\end{thm}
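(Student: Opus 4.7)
The plan is to reduce Theorem~\ref{punibound} to Proposition~\ref{unibound} by identifying the relevant dual-ball as one of the sets $B_q(K_\lambda \mid \lambda \in \Lambda)$. Let $q$ be the exponent dual to $p$ and set $K_\lambda := T_\lambda\dual \cball{F_\lambda\dual} \subseteq E_\lambda\dual$. Each $K_\lambda$ is a nonempty, absolutely convex, $w\dual$-compact subset of $E_\lambda\dual$, and the family is uniformly bounded since $\abs{K_\lambda} = \norm{T_\lambda\dual} = \norm{T_\lambda}$. Moreover, $\meeszlenk{\eps}{T_\lambda} = \meeszlenk{\eps}{K_\lambda}$ by the very definition of the $\eps$-Szlenk index of an operator.

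Next I would verify the identification
\[
\bigl(\bigoplus_{\lambda \in \Lambda} T_\lambda\bigr)\dual_p \, \cball{((\bigoplus_{\lambda \in \Lambda}F_\lambda)_p)\dual} \;=\; B_q(K_\lambda \mid \lambda \in \Lambda),
\]
both viewed inside $(\bigoplus_{\lambda \in \Lambda} E_\lambda\dual)_q$ via the identification in Section~\ref{prelimlim}. A typical element of the left-hand side has the form $(T_\lambda\dual y_\lambda\dual)_\lambda$ with $\sum_\lambda \norm{y_\lambda\dual}^q \leqslant 1$; writing $a_\lambda = \norm{y_\lambda\dual}$ and $z_\lambda\dual = y_\lambda\dual/\norm{y_\lambda\dual}$ when nonzero (else $0$) exhibits this element as $(a_\lambda T_\lambda\dual z_\lambda\dual)_\lambda \in B_q(K_\lambda \mid \lambda \in \Lambda)$, and the reverse containment is equally immediate because $(a_\lambda y_\lambda\dual)_\lambda$ lies in the unit ball of $(\bigoplus F_\lambda\dual)_q$ whenever $(a_\lambda) \in \cball{\ell_q(\Lambda)}$ and $y_\lambda\dual \in \cball{F_\lambda\dual}$. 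Consequently
\[
\mszlenk{(\bigoplus_{\lambda \in \Lambda} T_\lambda)_p} = \mszlenk{B_q(K_\lambda \mid \lambda \in \Lambda)},
\]
and the equivalence (i)$\Leftrightarrow$(ii) of Theorem~\ref{punibound} is exactly the statement of Proposition~\ref{unibound} applied to the family $\{K_\lambda\}$.

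For the final infimum formula, assume $(\bigoplus_{\lambda \in \Lambda}T_\lambda)_p$ is noncompact. If this direct sum fails to be Asplund then its Szlenk index is $\infty$ and, by the established (ii)$\Rightarrow$(i), no ordinal $\alpha$ satisfies (ii); both sides of the displayed formula are then $\infty$ by convention. Otherwise the direct sum is Asplund, and Proposition~\ref{collection}(iii) applied to the absolutely convex, $w\dual$-compact set $((\bigoplus T_\lambda)_p)\dual \cball{\cdot}$ yields a (unique) ordinal $\beta$ with $\mszlenk{(\bigoplus_{\lambda \in \Lambda} T_\lambda)_p} = \omega^\beta$; noncompactness together with Proposition~\ref{collection}(iv) forces $\beta > 0$, so the hypothesis $\alpha > 0$ of the theorem is respected. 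The implication (i)$\Rightarrow$(ii) with $\alpha = \beta$ shows that $\omega^\beta$ lies in the set over which the infimum is taken, while (ii)$\Rightarrow$(i) shows every member of that set dominates $\omega^\beta$; the two inequalities yield equality.

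There is no real obstacle: the heavy lifting (the quantitative estimate bridging finite subsets and arbitrary index sets, powered by Lemma~\ref{frount} and Lemma~\ref{postdoc2}) has already been carried out in Proposition~\ref{unibound}, and the present proof amounts to the bookkeeping needed to translate that $w\dual$-compact-set statement into its operator version and to confirm that Proposition~\ref{collection}(iii),(iv) supply the right ordinal form of the Szlenk index for the infimum claim.
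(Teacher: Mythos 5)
Your reduction of the equivalence (i)$\Leftrightarrow$(ii) to Proposition~\ref{unibound} via $K_\lambda := T_\lambda\dual\cball{F_\lambda\dual}$ and the identification $T\dual\cball{(\bigoplus E_\lambda)_p\dual} = B_q(K_\lambda \mid \lambda\in\Lambda)$ is exactly the paper's argument, spelled out in slightly more detail. For the infimum formula, however, you take a genuinely different route: you first determine $\mszlenk{T} = \omega^\beta$ with $\beta>0$ from Proposition~\ref{collection}(iii),(iv) and then squeeze the infimum between $\omega^\beta$ using \emph{both} directions of the proven equivalence, whereas the paper only invokes (ii)$\Rightarrow$(i) for the upper bound $\mszlenk{T}\leqslant\inf$ and then argues the reverse inequality directly (if $\omega^\beta<\inf$ pick $\eps$ with $\sup_\lambda\meeszlenk{\eps}{T_\lambda}\geqslant\omega^\beta$, use the isometric embedding of each $K_\lambda$ into $B_q(\cdot)$ to force $\meeszlenk{\eps}{T}\geqslant\omega^\beta$, and use that $\meeszlenk{\eps}{T}$ cannot be a limit ordinal). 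Your version is cleaner in that it re-uses the machinery already built. Two small remarks. First, your ``non-Asplund'' case is vacuous: since each $\mszlenk{T_\lambda}=\omega^{\alpha_\lambda}$ and $\alpha_\Lambda := \sup_\lambda\alpha_\lambda$ is an ordinal, condition (ii) holds with $\alpha = \alpha_\Lambda+1>0$, so (ii)$\Rightarrow$(i) already forces $T$ to be Asplund; the paper records this explicitly. Second, when applying (ii)$\Rightarrow$(i) to arbitrary members $\omega^\gamma$ of the infimum set, one should note that necessarily $\gamma>0$ — which indeed holds because $\meeszlenk{\eps}{T_\lambda}\geqslant 1$ for any nonempty $K_\lambda$, so $\omega^0$ never belongs to the set — in order to respect the hypothesis $\alpha>0$ of the theorem; this is automatic but worth a word.
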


\begin{proof}
For convenience we set $T = (\bigoplus_{\lambda \in \Lambda}T_\lambda)_p$. The equivalence of (i) and (ii) is achieved by applying Proposition~\ref{unibound} with $K_\lambda = T_\lambda\dual\cball{F_\lambda\dual}$ for all $\lambda \in \Lambda$, for in this case $T\dual \cball{(\bigoplus_{\lambda \in \Lambda}E_\lambda)_p\dual} =B_{q}(T_\lambda\dual\cball{F_\lambda\dual} \mid \lambda \in \Lambda)$, where $q\in [1, \, \infty )$ is dual to $p$.

For each $\lambda \in \Lambda$ let $\alpha_\lambda$ denote the unique ordinal satisfying $\mszlenk{T_\lambda} = \omega^{\alpha_\lambda}$. Set $\alpha_\Lambda = \sup\{\alpha_\lambda\mid \lambda \in \Lambda\}$ and note that the set \[ \set{\omega^\alpha \mid \sup\{\meeszlenk{\eps}{T_\lambda}\mid \lambda \in \Lambda\}< \omega^\alpha \mbox{ for all }\eps >0} \ni \omega^{\alpha_\Lambda +1}\] is nonempty. We have $ \mszlenk{T} \leqslant \inf \set{\omega^\alpha \mid \sup_{\lambda \in \Lambda}\meeszlenk{\eps}{T_\lambda}< \omega^\alpha \mbox{ for all }\eps >0}$ by the implication (ii) $\Rightarrow$ (i) above. 

To complete the proof, we now suppose that $T$ is noncompact. As $\mszlenk{T}$ is a power of $\omega$, it is enough to show that $\mszlenk{T}>\omega^\beta$ holds for $\beta$ satisfying $\omega^\beta < \inf \set{\omega^\alpha \mid \sup\{\meeszlenk{\eps}{T_\lambda}\mid \lambda \in \Lambda\}< \omega^\alpha \mbox{ for all }\eps >0}$. Take such $\beta$. If $\beta = 0$, then $\mszlenk{T}>\omega^\beta$ by noncompactness of $T$. On the other hand, if $\beta>0$ then there is $\eps>0$ so small that $\meeszlenk{\eps}{T}\geqslant \sup\{\meeszlenk{\eps}{T_\lambda}\mid \lambda \in \Lambda\} \geqslant \omega^\beta$. As $\meeszlenk{\eps}{T}$ cannot be a limit ordinal, we conclude that $\mszlenk{T}\geqslant \meeszlenk{\eps}{T}>\omega^\beta$.
\end{proof}

\subsection{Applications}\label{airconfix}
Our first result here is the following Banach space analogue of Theorem~\ref{punibound} which determines precisely the Szlenk index of a $c_0$-direct sum or $\ell_p$-direct sum of Banach spaces in terms of the behaviour of the $\eps$-Szlenk indices of the summand spaces.
\begin{thm}\label{accunibound}
Let $\Lambda$ be a set, $\set{E_\lambda \mid \lambda \in \Lambda}$ a family of Asplund spaces, $\alpha >0$ an ordinal and $p=0$ or $1<p<\infty$. The following are equivalent:
\begin{itemize}
\item[{\rm (i)}] ${\rm Sz}\big( (\bigoplus_{\lambda \in \Lambda}E_\lambda)_p \big) \leqslant \omega^{\alpha}$.
\item[{\rm (ii)}] $\sup \{ \meeszlenk{\eps}{E_\lambda}\mid \lambda \in \Lambda\}< \omega^\alpha$ for all $\eps >0$.
\end{itemize}
It follows that if $(\bigoplus_{\lambda \in \Lambda}E_\lambda)_p$ is infinite dimensional, then \[
\textstyle {\rm Sz}\big( (\bigoplus_{\lambda \in \Lambda}E_\lambda)_p \big) = \inf \set{\omega^\alpha \mid \sup\{\meeszlenk{\eps}{E_\lambda}\mid \lambda \in \Lambda\}< \omega^\alpha \mbox{ for all }\eps>0}\, .
\]
\end{thm}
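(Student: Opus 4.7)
The plan is to apply Theorem~\ref{punibound} directly to the family of identity operators $\{I_{E_\lambda} \mid \lambda\in\Lambda\}$. First I observe that this is a uniformly bounded family (with $\norm{I_{E_\lambda}}=1$) of Asplund operators, since each $E_\lambda$ is an Asplund space and, by Proposition~\ref{collection}(ii), a space is Asplund precisely when its identity operator is. The direct sum $(\bigoplus_{\lambda\in\Lambda}I_{E_\lambda})_p$ coincides with the identity operator on $(\bigoplus_{\lambda\in\Lambda}E_\lambda)_p$; hence
\[
\meeszlenk{\eps}{(\textstyle\bigoplus_{\lambda\in\Lambda}I_{E_\lambda})_p} = \meeszlenk{\eps}{(\textstyle\bigoplus_{\lambda\in\Lambda}E_\lambda)_p}, \qquad \mszlenk{(\textstyle\bigoplus_{\lambda\in\Lambda}I_{E_\lambda})_p} = \mszlenk{(\textstyle\bigoplus_{\lambda\in\Lambda}E_\lambda)_p},
\]
and similarly $\meeszlenk{\eps}{I_{E_\lambda}} = \meeszlenk{\eps}{E_\lambda}$ for every $\lambda$ and every $\eps>0$.

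With these identifications, the equivalence of (i) and (ii) is exactly the equivalence of the corresponding statements in Theorem~\ref{punibound} applied to $T_\lambda = I_{E_\lambda}$. For the concluding infimum formula, it only remains to translate the noncompactness hypothesis of Theorem~\ref{punibound} into the infinite dimensionality hypothesis appearing here: by Proposition~\ref{collection}(iv), the identity operator on $(\bigoplus_{\lambda\in\Lambda}E_\lambda)_p$ fails to be compact precisely when $(\bigoplus_{\lambda\in\Lambda}E_\lambda)_p$ is infinite dimensional. Under this hypothesis the final assertion of Theorem~\ref{punibound} yields the desired expression of $\mszlenk{(\bigoplus_{\lambda\in\Lambda}E_\lambda)_p}$ as the infimum of those $\omega^\alpha$ for which (ii) holds, completing the proof.

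There is essentially no obstacle: all the technical work is absorbed into Theorem~\ref{punibound} (and, ultimately, into Lemma~\ref{frount}). The sole verification required is the straightforward identification of the direct sum of identity operators with the identity on the direct sum, together with the translation of noncompactness of the identity into infinite dimensionality.
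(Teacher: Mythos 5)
Your proof is correct and follows exactly the paper's route: the paper likewise obtains Theorem~\ref{accunibound} by applying Theorem~\ref{punibound} with $T_\lambda = I_{E_\lambda}$, and you have simply spelled out the routine identifications (direct sum of identities equals identity on the direct sum, noncompactness of $I_E$ equals infinite-dimensionality of $E$) that the paper leaves implicit.
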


\begin{proof}
The conclusions of the theorem follow by taking $T_\lambda$ to be the identity operator of $E_\lambda$ for each $\lambda \in \Lambda$ in the statement of Theorem~\ref{punibound}.
\end{proof}

\begin{thm}\label{trivcase}
Let $\Lambda$ be a set, $E$ an infinite dimensional Banach space and $1< p, r < \infty$. Then \[ \mszlenk{E} = \mszlenk{c_0 (\Lambda , \, E)} = \mszlenk{\ell_p (\Lambda , \, E)} = \mszlenk{\ell_r (\Lambda , \, E)} .\]
\end{thm}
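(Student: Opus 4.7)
My plan is to reduce the four-way equality to a single application of Theorem~\ref{accunibound}, taken with the constant family $E_\lambda = E$ for every $\lambda \in \Lambda$. The argument splits naturally depending on whether $E$ is Asplund, and once the split is made, everything flows from the results already in place.

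First I would dispose of the non-Asplund case. If $E$ fails to be Asplund then $\mszlenk{E} = \infty$ by Proposition~\ref{collection}(ii). Fixing any $\lambda_0 \in \Lambda$, the canonical coordinate injection $U_{\set{\lambda_0}}$ realises $E$ as an isometric subspace of each of $c_0(\Lambda,E)$, $\ell_p(\Lambda,E)$ and $\ell_r(\Lambda,E)$, so Proposition~\ref{collection}(i) forces all three of those indices to be $\infty$ as well, and the claimed equality is immediate.

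For the main case assume $E$ is Asplund. Since $E$ is infinite-dimensional, Proposition~\ref{collection}(iv) gives $\mszlenk{E} \geq \omega$, and Proposition~\ref{collection}(iii) supplies a unique ordinal $\alpha \geq 1$ with $\mszlenk{E} = \omega^\alpha$. The hinge of the proof, which I expect to be the only step requiring genuine thought, is the following observation: because $\alpha \geq 1$, $\omega^\alpha$ is a limit ordinal, whereas by the remark in Section~\ref{prelimlim} on $w\dual$-compactness, $\meeszlenk{\eps}{E}$ is never a limit ordinal. Combining this with the trivial bound $\meeszlenk{\eps}{E} \leq \mszlenk{E} = \omega^\alpha$ promotes it to the strict inequality $\meeszlenk{\eps}{E} < \omega^\alpha$ for every $\eps > 0$, which is exactly condition~(ii) of Theorem~\ref{accunibound} for the constant family $E_\lambda = E$.

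Having secured that strict inequality, Theorem~\ref{accunibound} then delivers $\mszlenk{c_0(\Lambda,E)} \leq \omega^\alpha$, and likewise for $\ell_p(\Lambda,E)$ and $\ell_r(\Lambda,E)$. The reverse inequalities follow from the same coordinate embedding as in the non-Asplund case together with Proposition~\ref{collection}(i), so all four indices coincide with $\omega^\alpha = \mszlenk{E}$. I anticipate no genuine obstacle: all of the technical work lives inside Theorem~\ref{accunibound} (and hence ultimately inside Lemma~\ref{frount}), and the only fresh input here is the limit-versus-non-limit ordinal observation that converts a weak Szlenk bound into the strict $\eps$-Szlenk bound demanded by Theorem~\ref{accunibound}(ii).
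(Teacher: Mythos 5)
Your proof is correct and takes the same approach as the paper, which simply invokes Theorem~\ref{accunibound} with the constant family $E_\lambda = E$; you have merely unpacked the details of that one-line proof, including the (correct) observation that $\meeszlenk{\eps}{E}$ is never a limit ordinal while $\omega^\alpha$ is one for $\alpha\geqslant 1$, which is exactly what makes the hypothesis of Theorem~\ref{accunibound}(ii) hold automatically.
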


\begin{proof}
Apply Theorem~\ref{accunibound} with $E_\lambda = E$ for all $\lambda \in \Lambda$.
\end{proof}

The previous theorem, Theorem~\ref{trivcase}, allows us to add to the class of ordinals $\gamma$ for which the Szlenk index of $C(\gamma +1)$ is known (here, $\gamma +1$ is equipped with its order topology). The computation of the Szlenk index of $C(\omega_1+1)$, in particular $\mszlenk{C(\omega_1+1)}=\omega_1\cdot \omega$, is due to {\hajek} and Lancien \cite{H'ajek2007}. Essentially using the fact that $\mszlenk{C(\xi +1)} = \mszlenk{C(\zeta +1)}$ for ordinals $\xi$ and $\zeta$ satisfying $\xi \leqslant \zeta <\xi\cdot\omega$ (an easy consequence of Proposition~\ref{collection}(v)), {\hajek} and Lancien deduce that $\mszlenk{C(\gamma+1)}=\omega_1\cdot \omega$ whenever $\omega_1\leqslant \gamma <\omega_1\cdot\omega$. We claim that $\mszlenk{C(\gamma+1)}=\omega_1\cdot \omega$ whenever $\omega_1\leqslant \gamma <\omega_1\cdot\omega^\omega$, a fact that will follow once we have shown that $\mszlenk{C(\xi +1)} = \mszlenk{C(\zeta +1)}$ whenever $\xi$ and $\zeta$ are ordinals satisfying $\omega \leqslant \xi \leqslant \zeta <\xi\cdot\omega^\omega$. If $\xi$ and $\zeta$ are ordinals satisfying $\omega \leqslant \xi \leqslant \zeta <\xi\cdot\omega^\omega$, then there exists $n<\omega$ such that $C(\zeta +1)$ is isomorphic to a subspace of $C(\xi \cdot \omega^n +1)$. Thus, by Proposition~\ref{collection}(i), it suffices to show that $\mszlenk{C(\xi+1)} = \mszlenk{C(\xi \cdot \omega^n +1)}$ for all $n<\omega$. This is obviously true for $n=0$, and if true for some $n$ then, since $C(\xi\cdot\omega^{n+1}+1)$ is isomorphic to $c_0(\omega,\, C(\xi\cdot\omega^n+1))$, Theorem~\ref{trivcase} yields
\begin{align*}
\mszlenk{C(\xi\cdot\omega^{n+1}+1)} 
= \mszlenk{c_0(\omega,\, C(\xi\cdot\omega^n+1))} &= \mszlenk{C(\xi\cdot\omega^n+1)} \\&= \mszlenk{C(\xi+1)},
\end{align*}
which completes the proof.

The following proposition asserts that the set of all countable values of the Szlenk index of Banach spaces is attained by the class of Banach spaces with a shrinking basis. A further consequence of this result is that if for $\alpha <\omega_1$ there exists a Banach space of Szlenk index $\omega^\alpha$, then {\pelch}'s complementably universal basis space (see \cite{Pelczy'nski1969}) has a complemented subspace of Szlenk index $\omega^\alpha$.

\begin{prop}\label{shrinkex}
Let $0<\alpha <\omega_1$. The following are equivalent:

\nr{i} There exists a Banach space $E$ with $\mszlenk{E} =\omega^\alpha$.

\nr{ii} There exists a Banach space $E$ with a shrinking basis and $\mszlenk{E} =\omega^\alpha$.
\end{prop}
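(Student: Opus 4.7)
The implication (ii)$\Rightarrow$(i) is trivial. For the converse, my plan has three stages: (a) reduce to a separable closed subspace $F$ of $E$ with $\mszlenk{F}=\omega^\alpha$; (b) embed $F$ isomorphically into a Banach space $Z$ with a shrinking basis via Zippin's embedding theorem; and (c) arrange that $\mszlenk{Z}=\omega^\alpha$, not merely $\mszlenk{Z}\geqslant\omega^\alpha$.

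Stage (a) is a routine separable-reduction argument. Since $\alpha<\omega_1$, the ordinal $\omega^\alpha$ is countable, so only countably many pairs $(\gamma,\eps)$ with rational $\eps>0$ and $\gamma<\omega^\alpha$ are needed to witness the full Szlenk index of $E$. For each such pair I would inductively select $x^*_{\gamma,\eps}\in\mdset{\gamma}{\eps}{\cball{E\dual}}$, together with the finitely many vectors of $E$ that define the $w\dual$-open neighbourhood used at that stage of the Szlenk derivation, and let $F$ be the closed linear span of all vectors chosen. Then $F$ is separable, Proposition~\ref{collection}(i) gives $\mszlenk{F}\leqslant\mszlenk{E}=\omega^\alpha$, and the tree of witnesses descends through the restriction map $E\dual\to F\dual$ to yield the reverse inequality. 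Stage (b) then comes for free: $F$ is Asplund by Proposition~\ref{collection}(ii), hence $F\dual$ is separable, and Zippin's theorem produces a Banach space $Z$ with a shrinking basis containing $F$ isomorphically, so that Proposition~\ref{collection}(i) and (iii) give $\mszlenk{Z}=\omega^\beta$ for some $\beta\geqslant\alpha$.

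Stage (c) is the main obstacle, and is where essentially all of the work lies. Classical Zippin provides no quantitative control on $\mszlenk{Z}$, and to force $\beta=\alpha$ I would invoke a quantitative strengthening of it: for every countable $\alpha$ there exists a separable Banach space with shrinking basis and Szlenk index exactly $\omega^\alpha$ that isomorphically contains every separable Banach space of Szlenk index at most $\omega^\alpha$; embedding $F$ into such a universal $Z$ finishes the proof. Such a universal space can be obtained either by a direct universal-space construction for the class of separable Banach spaces of Szlenk index $\leqslant\omega^\alpha$, or by combining the asymptotic uniform smoothness renorming characterising $\mszlenk{F}\leqslant\omega^\alpha$ with a skipped-blocking argument producing a shrinking finite-dimensional decomposition of the correct index, and then upgrading the FDD to a basis via a standard $\pi$-embedding. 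Either route requires a substantial piece of structure theory lying outside the tools developed earlier in the paper; stages (a) and (b), by contrast, are essentially bookkeeping.
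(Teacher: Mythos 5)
Your stage (c) is where the argument breaks down, and it breaks down in a way that makes the whole route unusable. You want to invoke a ``universal'' space: a separable Banach space $Z_\alpha$ with a shrinking basis, $\mszlenk{Z_\alpha}=\omega^\alpha$, that contains isomorphic copies of every separable space of Szlenk index at most $\omega^\alpha$. But the existence of such a space is strictly stronger than the conclusion (ii) you are trying to reach --- if $Z_\alpha$ existed you could simply take $E=Z_\alpha$ and discard stages (a) and (b) altogether, so the argument is circular. Worse, the statement you invoke is unconditionally false as a blanket claim for all $0<\alpha<\omega_1$: the existence of \emph{any} Banach space of Szlenk index $\omega^\alpha$ is the content of hypothesis (i), and as the paper notes (Question~\ref{valueset}), for limit $\alpha$ of countable cofinality this is an open problem; a universal space with Szlenk index \emph{exactly} $\omega^\alpha$ cannot exist for such $\alpha$ unless the open problem has a positive answer. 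Finally, the universality theorems you have in mind (Odell--Schlumprecht--Zs\'ak and their successors) unavoidably inflate the Szlenk index of the universal space beyond $\omega^\alpha$; there is no known version that keeps the index pinned at $\omega^\alpha$, and your own phrase ``quantitative strengthening'' is papering over exactly the gap that needs to be closed.

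The paper's proof sidesteps all of this. Rather than embedding the separable reduct $F$ into a superspace via Zippin, it produces, for each $n$, a \emph{quotient} $F/G_n$ of $F$ with a shrinking basis of basis constant at most $2$ and $\eps$-Szlenk index exceeding $\meeszlenk{1/n}{E}$; this is Lancien's refinement (Proposition~\ref{gillesres}(ii)) of the Johnson--Rosenthal subspace/quotient theorem, and unlike Zippin it preserves the Szlenk index from below. One then forms $(\bigoplus_n F/G_n)_2$, which still has a shrinking basis because the basis constants are uniformly bounded. The upper bound $\mszlenk{(\bigoplus_n F/G_n)_2}\leqslant\omega^\alpha$ now follows from the paper's own Theorem~\ref{trivcase} applied to $(\bigoplus_n F)_2\cong\ell_2(F)$, while the lower bound follows since the $1/n$-Szlenk indices of the summands exhaust $\omega^\alpha$. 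This gives a direct construction, entirely internal to the tools developed earlier in the paper, of a quotient $\ell_2(F)/G$ with a shrinking basis and Szlenk index exactly $\omega^\alpha$. Your stages (a) and (b) are fine in spirit, but the quotient construction plus the direct-sum theorem is the actual engine of the proof, and you need to replace your stage (c) with it.
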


To prove Proposition~\ref{shrinkex}, we shall call on the following result regarding subspaces and quotients, due to G. Lancien \cite{Lancien1996} and \cite[Theorem~III.1]{Johnson1972}:

\begin{prop}\label{gillesres}
Let $\beta <\omega_1$ and let $E$ be a Banach space such that $\mszlenk{E}>\beta$.

\nr{i} There is a separable closed subspace $F$ of $E$ such that $\mszlenk{F} >\beta$.

\nr{ii} If $E\dual$ is norm separable, then for every $\delta >0$ there is a closed subspace $F$ of $E$ such that $\mszlenk{E/F}>\beta$ and $E/F$ has a shrinking basis with basis constant not exceeding $1+\delta$.
\end{prop}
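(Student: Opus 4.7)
The implication (ii) $\Rightarrow$ (i) is trivial, so the plan is to upgrade an arbitrary Banach space $E$ with $\mszlenk{E} = \omega^\alpha$ to a Banach space of the same Szlenk index admitting a shrinking basis. Since $0 < \alpha < \omega_1$, $\omega^\alpha$ is a countable limit ordinal; I will fix a strictly increasing sequence $(\gamma_n)$ of ordinals with $\gamma_n \nearrow \omega^\alpha$. The strategy is first to reduce to a separable Asplund space using Proposition~\ref{gillesres}(i), then to apply Proposition~\ref{gillesres}(ii) countably often and assemble the resulting quotients via an $\ell_2$-direct sum, with Theorem~\ref{accunibound} controlling the Szlenk index of the final construction.

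For each $n$, Proposition~\ref{gillesres}(i) yields a separable closed subspace $F_n \subseteq E$ with $\mszlenk{F_n} > \gamma_n$. Let $F = (\bigoplus_n F_n)_2$. This is separable; it embeds isometrically into $\ell_2(\nat, E)$, which by Theorem~\ref{trivcase} has Szlenk index $\mszlenk{E} = \omega^\alpha$, so $\mszlenk{F} \leqslant \omega^\alpha$ by Proposition~\ref{collection}(i). Conversely each $F_n$ embeds into $F$, giving $\mszlenk{F} > \gamma_n$ for every $n$ and hence $\mszlenk{F} \geqslant \omega^\alpha$. Thus $\mszlenk{F} = \omega^\alpha$ and $F$ is a separable Asplund space (Proposition~\ref{collection}(ii)), so $F\dual$ is norm separable. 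Now for each $n$, apply Proposition~\ref{gillesres}(ii) to $F$ with $\beta = \gamma_n$ to obtain a closed subspace $G_n \subseteq F$ such that $H_n := F/G_n$ has a shrinking basis and $\mszlenk{H_n} > \gamma_n$. Set $H = (\bigoplus_n H_n)_2$; concatenating the shrinking bases of the summands produces a shrinking basis for $H$.

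It remains to show $\mszlenk{H} = \omega^\alpha$. The lower bound is immediate: each $H_n$ is a direct summand of $H$, so $\mszlenk{H} \geqslant \mszlenk{H_n} > \gamma_n$, whence $\mszlenk{H} \geqslant \omega^\alpha$. The upper bound is the main obstacle, since a priori the countably many indices $\mszlenk{H_n}$ could collectively push the Szlenk index of the direct sum strictly above $\omega^\alpha$. The key point is that each $H_n$ is a quotient of $F$: the adjoint of the quotient map is a $w\dual$-$w\dual$ homeomorphic isometric embedding of $\cball{H_n\dual}$ onto $G_n^\perp \cap \cball{F\dual} \subseteq \cball{F\dual}$, whence $\meeszlenk{\eps}{H_n} \leqslant \meeszlenk{\eps}{F}$ for every $\eps > 0$. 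Since $\mszlenk{F} = \omega^\alpha$ is a limit ordinal while $\eps$-Szlenk indices of nonempty $w\dual$-compact sets are never limit ordinals, $\meeszlenk{\eps}{F} < \omega^\alpha$ for every $\eps > 0$. Therefore $\sup_n \meeszlenk{\eps}{H_n} < \omega^\alpha$ for every $\eps > 0$, and Theorem~\ref{accunibound} yields $\mszlenk{H} \leqslant \omega^\alpha$, completing the argument.
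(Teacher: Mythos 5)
You have proved the wrong statement. Your argument is a proof of Proposition~\ref{shrinkex} (and of the unnamed proposition immediately preceding it), not of Proposition~\ref{gillesres} --- indeed your argument \emph{uses} both parts of Proposition~\ref{gillesres} as ingredients. The opening remark that ``the implication (ii) $\Rightarrow$ (i) is trivial'' already signals the mix-up: parts (i) and (ii) of Proposition~\ref{gillesres} are two independent assertions under different hypotheses, not two sides of an equivalence. Proposition~\ref{gillesres} is not given a self-contained proof in the paper; the author attributes it to Lancien \cite{Lancien1996}, whose proof closely follows Johnson and Rosenthal \cite[Theorem~III.1]{Johnson1972}, with the basis-constant refinement in part (ii) obtained by carrying along the observations on basis constants already present in the Johnson--Rosenthal argument. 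A correct response here would have reproduced or at least sketched that construction, rather than a downstream consequence.

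For what it is worth, your argument for Proposition~\ref{shrinkex} is valid, and it differs from the paper's in two small respects: you form the intermediate separable space $F$ as the $\ell_2$-direct sum $(\bigoplus_n F_n)_2$ rather than as the closed linear span of the $F_n$ inside $E$, and you obtain the upper bound $\mszlenk{H}\leqslant\omega^\alpha$ from the quotient estimate $\meeszlenk{\eps}{H_n}\leqslant\meeszlenk{\eps}{F}<\omega^\alpha$ together with Theorem~\ref{accunibound}, whereas the paper realizes $(\bigoplus_n F/G_n)_2$ as a quotient of $\ell_2(F)$ and then applies Theorem~\ref{trivcase}. Both closures are fine, but neither constitutes a proof of Proposition~\ref{gillesres} itself.
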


With the exception of the basis constant assertion of part (ii), Proposition~\ref{gillesres} is  proved in \cite{Lancien1996}. Lancien's proof follows closely the proof of \cite[Theorem~III.1]{Johnson1972}, and the extra assertion above regarding the basis constant is easily added to Lancien's result using the observations regarding basis constants in the proof of \cite[Theorem~III.1]{Johnson1972}.

Proposition~\ref{shrinkex} is an immediate consequence of the following
\begin{prop}
Let $\alpha >0$ be a countable ordinal and $E$ a Banach space with $\mszlenk{E} =\omega^\alpha$. Then there exist closed subspaces $F\subseteq E$ and $G\subseteq \ell_2 (F)$ such that $\ell_2(F)/G$ has a shrinking basis and $\mszlenk{\ell_2(F)/G}=\omega^\alpha$.
\end{prop}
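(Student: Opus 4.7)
The plan is to reduce first to a separable subspace of $E$ of the same Szlenk index and then to assemble the desired quotient from countably many shrinking-basis quotients supplied by Proposition~\ref{gillesres}(ii). The key structural point is that for $0<\alpha<\omega_1$ the ordinal $\omega^\alpha$ is a countable limit ordinal, so it admits a cofinal sequence $\beta_n\nearrow\omega^\alpha$; no single application of Proposition~\ref{gillesres}(ii) can ``pin'' $\omega^\alpha$ from below, but a countable collection will.

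For the separable reduction I fix $\beta_n\nearrow\omega^\alpha$ and apply Proposition~\ref{gillesres}(i) for each $n$, obtaining separable closed $F_n\subseteq E$ with $\mszlenk{F_n}>\beta_n$. Letting $F$ be the closed linear span of $\bigcup_n F_n$, Proposition~\ref{collection}(i) yields $\omega^\alpha=\mszlenk{E}\geqslant\mszlenk{F}\geqslant\mszlenk{F_n}>\beta_n$ for every $n$, forcing $\mszlenk{F}=\omega^\alpha$; as $F$ is separable and Asplund, $F\dual$ is norm-separable. I then apply Proposition~\ref{gillesres}(ii) to $F$ with $\beta=\beta_n$ and $\delta=1/n$, producing closed $H_n\subseteq F$ with $\mszlenk{F/H_n}>\beta_n$ and $F/H_n$ admitting a shrinking basis $(e^n_k)_k$ of constant at most $1+1/n$. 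Setting $G:=(\bigoplus_n H_n)_2\subseteq\ell_2(F)$, the coordinatewise quotient map $\ell_2(F)\to Y:=(\bigoplus_n F/H_n)_2$ given by $(x_n)\mapsto([x_n]_{H_n})$ is surjective (given $([y_n])\in Y$, lift using representatives with $\norm{z_n}\leqslant 2\norm{[y_n]}$) with kernel $G$, so $\ell_2(F)/G\cong Y$.

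It remains to verify the two required properties of $Y$. For the Szlenk index, Theorem~\ref{trivcase} gives $\mszlenk{\ell_2(F)}=\mszlenk{F}=\omega^\alpha$, hence Proposition~\ref{collection}(i) yields $\mszlenk{Y}\leqslant\omega^\alpha$; conversely each $F/H_n$ embeds isometrically into $Y$ as a coordinate factor, so $\mszlenk{Y}\geqslant\mszlenk{F/H_n}>\beta_n$ for every $n$, and since $\omega^\alpha$ is a limit ordinal this forces $\mszlenk{Y}\geqslant\omega^\alpha$. For the basis I interleave $(e^n_k)_{n,k}$ along any bijection $\pi\colon\nat\to\nat\times\nat$ that preserves the order within each row, so that every partial-sum projection of the interleaved sequence is an $\ell_2$-direct sum of partial-sum projections of the $(e^n_k)_k$; this keeps the basis constant at most $\sup_n(1+1/n)=2$, and the shrinking property transfers from components to the whole by a standard dominated-convergence argument in $Y\dual\cong(\bigoplus_n (F/H_n)\dual)_2$. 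The main technical point I expect to be the obstacle is this last step: ensuring that the $\ell_2$-assembly of countably many shrinking bases remains a shrinking basis, which is precisely where the uniform bound $1+1/n$ on basis constants from Proposition~\ref{gillesres}(ii) becomes essential.
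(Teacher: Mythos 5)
Your proposal is correct and follows essentially the same route as the paper: both reduce to a separable subspace $F$ of full Szlenk index via Proposition~\ref{gillesres}(i) applied along a cofinal sequence, then take $\ell_2$-sums of shrinking-basis quotients from Proposition~\ref{gillesres}(ii) with uniformly bounded basis constants, identify $\ell_2(F)/G$ with $(\bigoplus_n F/H_n)_2$, and sandwich the Szlenk index between Theorem~\ref{trivcase} (upper bound) and the embedded factors (lower bound). The only cosmetic difference is that you parametrise by an abstract cofinal sequence $\beta_n\nearrow\omega^\alpha$ whereas the paper uses $\beta_n=\mathrm{Sz}_{1/n}(E)$, and you spell out the interleaving of bases more explicitly; neither change affects the substance.
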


\begin{proof}
For each $n\in\nat$, Proposition~\ref{gillesres}(i) yields a separable closed subspace $D_n$ of $E$ such that $\mszlenk{D_n} > \meeszlenk{1/n}{E}$. Let $F = \overline{{\rm span}} \left( \bigcup_{n\in \nat}D_n \right)$. Then
\[
\omega^\alpha = \mszlenk{E} = \sup_n \meeszlenk{1/n}{E} \leqslant \sup_n \mszlenk{D_n} \leqslant \mszlenk{F}\leqslant \mszlenk{E} = \omega^\alpha ,
\]
hence equality holds throughout. In particular, $\mszlenk{F} = \omega^\alpha$ and, as $F$ is a separable Asplund space (indeed, $\mszlenk{F}<\infty$), $F\dual$ is norm separable. For each $n\in\nat$ let $F_n = F$. Then, by Proposition~\ref{gillesres}(ii), for each $n\in\nat$ there is a closed subspace $G_n$ of $F_n$ such that $\mszlenk{F_n /G_n} > \meeszlenk{1/n}{E}$ and $F_n /G_n$ has a shrinking basis with basis constant not exceeding $2$. Let $G$ denote the image of $(\bigoplus_{n\in\nat}G_n)_2$ under its natural embedding into $(\bigoplus_{n\in\nat}F_n)_2$. Then $(\bigoplus_{n\in\nat}F_n)_2 /G$ is naturally isometrically isomorphic to $(\bigoplus_{n\in\nat}F_n/G_n)_2$. Note that $(\bigoplus_{n\in\nat}F_n/G_n)_2$ has a shrinking basis since it is the $\ell_2$-direct sum of a countable family of Banach spaces with shrinking bases that have uniformly bounded basis constants. On the one hand, by Theorem~\ref{trivcase} we have
\[
\textstyle \mszlenk{(\bigoplus_{n\in\nat}F_n)_2/G} \leqslant \mszlenk{(\bigoplus_{n\in\nat}F_n)_2} = \mszlenk{F} = \omega^\alpha .
\]
On the other hand,
\[
\textstyle \mszlenk{(\bigoplus_{n\in\nat}F_n)_2/G} = \mszlenk{(\bigoplus_{n\in\nat}F_n/G_n)_2} \geqslant  \sup_n \meeszlenk{1/n}{E_n} = \mszlenk{E} = \omega^\alpha.
\]
Thus $(\bigoplus_{n\in\nat}F_n)_2/G$ has a shrinking basis and Szlenk index $\omega^\alpha$.
\end{proof}

\begin{prop}\label{weexist}
Let $\alpha$ be an ordinal. Then there exists a Banach space of Szlenk index $\omega^{\alpha+1}$.
\end{prop}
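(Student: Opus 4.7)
The plan is to exhibit an explicit witness: the space $E_\alpha := C(\omega^{\omega^\alpha} + 1)$, equipped with the supremum norm and with the ordinal interval $[0,\omega^{\omega^\alpha}]$ carrying its order topology (making it a compact Hausdorff space). The goal is to verify $\mszlenk{E_\alpha} = \omega^{\alpha+1}$ for every ordinal $\alpha$, by transfinite induction on $\alpha$.

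The base case $\alpha = 0$ yields $E_0 = C(\omega + 1) \cong c_0$, for which $\mszlenk{c_0} = \omega$ is classical. For countable $\alpha$ the assertion is exactly Samuel's theorem recorded in Section~\ref{canemace}, so the substance of the induction is the extension to arbitrary (possibly uncountable) $\alpha$, which proceeds in the same spirit as the {\hajek}--Lancien argument discussed in Section~\ref{airconfix}.

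For the inductive step, the key move is a $c_0$-direct sum decomposition $E_\alpha \cong (\bigoplus_{\beta} C(L_\beta))_{c_0}$ coming from the ordinal structure of $[0,\omega^{\omega^\alpha}]$, where each $L_\beta$ is a compact subinterval of strictly smaller ordinal complexity. By the inductive hypothesis $\mszlenk{C(L_\beta)} < \omega^{\alpha+1}$ for every $\beta$, whence the upper bound $\mszlenk{E_\alpha} \leqslant \omega^{\alpha+1}$ follows immediately from Theorem~\ref{accunibound}.

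The main obstacle is the matching lower bound $\mszlenk{E_\alpha} > \omega^\alpha$: via the characterisation in Theorem~\ref{accunibound}, this forces the $\eps$-Szlenk indices of the summands $C(L_\beta)$ to genuinely cluster up to $\omega^\alpha$ for some fixed $\eps > 0$. The clustering cannot arise from a homogeneous direct sum (by Theorem~\ref{trivcase}), so the essential input is that the summands $C(L_\beta)$ are pairwise isomorphic but not uniformly so --- exactly the mechanism already illustrated in Section~\ref{canemace} by $(\bigoplus_{0<n<\omega} C(\omega^n+1))_{c_0} \cong C(\omega^\omega+1)$ attaining Szlenk index $\omega^2$ despite each summand $C(\omega^n+1)$ being isomorphic to $c_0$. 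Making this clustering precise at each inductive step, via the Cantor--Bendixson analysis of the ordinal compactum, is the technical heart of the proof.
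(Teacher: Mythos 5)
Your proposal takes a genuinely different route from the paper, but it has a concrete gap that is not merely a matter of filling in routine details.

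For countable $\alpha$, you correctly observe that the assertion is exactly Samuel's theorem, so the entire content of the proposition lies in the case $\alpha\geqslant\omega_1$. This is precisely where the proposed argument breaks down. If $\alpha$ has uncountable cofinality (for instance $\alpha=\omega_1$, or $\alpha=\omega_2$), then $\omega^{\omega^\alpha}$ also has uncountable cofinality, so the ordinal compactum $[0,\omega^{\omega^\alpha}]$ admits \emph{no} countable $c_0$-direct-sum decomposition of the sort you invoke; the inductive mechanism based on Theorem~\ref{accunibound} simply has nothing to act on. Even for $\alpha\geqslant\omega_1$ of countable cofinality, the Bessaga--Pe\l czy\'nski classification that underlies all the isomorphism manipulations you cite (including the motivating example $(\bigoplus_{n}C(\omega^n+1))_0\cong C(\omega^\omega+1)$) is a theorem about \emph{countable} ordinal compacta and does not transfer. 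You flag the lower bound $\mszlenk{E_\alpha}>\omega^\alpha$ and the ``Cantor--Bendixson analysis'' as the technical heart, but that heart is not supplied, and establishing $\mszlenk{C(\omega^{\omega^\alpha}+1)}=\omega^{\alpha+1}$ for arbitrary ordinals $\alpha$ is a substantially harder problem than Proposition~\ref{weexist} itself --- indeed, even the case $\alpha=\omega_1$ (where $\omega^{\omega^{\omega_1}}=\omega_1$) is a nontrivial theorem of H\'ajek and Lancien requiring its own separate argument, and the paper's own Section~\ref{airconfix} shows that it takes real work just to push the known $C(\gamma+1)$ computations slightly beyond $\omega_1$.

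The paper's own proof avoids $C(K)$ spaces entirely. It uses Szlenk's transfinite $\ell_2$-sum construction ($E_0=\{0\}$, $E_{\beta+1}=E_\beta\oplus_1\ell_2$, $E_\beta=(\bigoplus_{\gamma<\beta}E_\gamma)_2$ at limits), together with the cited lower estimate $\meeszlenk{1}{E_\beta}>\beta$. Taking $\beta'$ to be the \emph{least} ordinal with $\mszlenk{E_{\beta'}}>\omega^\alpha$, one gets $\mszlenk{E_{\beta'}}\geqslant\omega^{\alpha+1}$ from Proposition~\ref{collection}(iii), while Proposition~\ref{collection}(v) forces $\beta'$ to be a limit; at a limit, $E_{\beta'}$ is an $\ell_2$-direct sum of spaces of Szlenk index $\leqslant\omega^\alpha$, so Theorem~\ref{accunibound} gives the matching upper bound $\leqslant\omega^{\alpha+1}$. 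This least-ordinal argument is entirely self-contained within the paper's framework and requires no isomorphic classification of $C(K)$ spaces, which is what allows it to work uniformly for all ordinals $\alpha$. If you want to salvage your approach, you would at minimum need to either restrict to $\alpha$ of countable cofinality and prove the clustering lower bound by hand, or replace the $C(K)$ witness with a construction (such as Szlenk's) for which the inductive control is built in.
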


\begin{proof}
Our proof is based on the construction of Szlenk in \cite{Szlenk1968}, by which we construct Banach spaces $E_\beta$ indexed by the class of ordinals $\beta$. Let $E_0 = \set{0}$, $E_{\beta +1} = E_\beta \oplus_1 \ell_2 $ and, if $\beta$ is a limit ordinal, $E_\beta = (\bigoplus_{\gamma <\beta}E_\gamma )_2$. It is shown in \cite[Theorem~4]{Lancien2006} that for this construction we have $\meeszlenk{1}{E_\beta} >\beta$ for all ordinals $\beta$. As the assertion of the proposition is known to be true for $\alpha = 0$ (for example, $\mszlenk{\ell_2} = \omega$), we assume that $\alpha >0$ and let $\beta'$ denote the least ordinal such that $\mszlenk{E_{\beta '}}> \omega^{\alpha}$. Then, by Proposition~\ref{collection}(iii), $\mszlenk{E_{\beta '}}\geqslant \omega^{\alpha+1}$. By Proposition~\ref{collection}(v) and the definition of $\beta'$, it must be that $\beta'$ is a limit ordinal, hence $E_{\beta'}=(\bigoplus_{\beta '' <\beta '}E_{\beta''})_2$. It follows that $\mszlenk{E_{\beta '}} = \mszlenk{(\bigoplus_{\beta '' <\beta '}E_{\beta''})_2} \leqslant \omega^{\alpha+1}$, where the final inequality here follows from Theorem~\ref{accunibound} and the fact that, for all $\eps > 0$,
\[
\sup\{\meeszlenk{\eps}{E_{\beta''}}\mid \beta'' <\beta'\} \leqslant \sup\{\mszlenk{E_{\beta''}}\mid \beta'' <\beta'\} \leqslant \omega^\alpha <\omega^{\alpha +1}.
\]
It is now clear that $\mszlenk{E_{\beta '}} = \omega^{\alpha +1}$, so we are done.
\end{proof}

Implicit in the proof of Proposition~\ref{weexist} is the following fact: for a set $\Lambda$, Banach spaces $\set{E_\lambda \mid \lambda \in \Lambda}$, $p=0$ or $1<p<\infty$ and $\alpha$ an ordinal satisfying $\sup\{\mszlenk{E_\lambda}\mid \lambda\in\Lambda\}\leqslant \omega^\alpha$, we have $\mszlenk{(\bigoplus_{\lambda\in\Lambda}E_\lambda)_p}\leqslant \omega^{\alpha+1}$. This follows easily from Theorem~\ref{accunibound}, but seems to have been known for some time. For example, the separable case was established in \cite[Proposition~15]{Odell2007}, and the result is also implicit in the proof of \cite[Proposition~5]{Lancien2006}.

Proposition~\ref{weexist} and Proposition~\ref{shrinkex} concern themselves with the existence of Banach spaces having a particular Szlenk index. The author is not aware of a complete classification of the possible values of the Szlenk index of a Banach space. Proposition~\ref{collection}(iii) asserts that the Szlenk index of a Banach space is a power of $\omega$. On the other hand, as the Szlenk index of a Banach space $E$ is the supremum of the countable set $\set{\meeszlenk{1/n}{E}\mid n\in\nat}$, it follows that the Szlenk index of a Banach space is of countable cofinality. In particular, if $\alpha$ is an ordinal of uncountable cofinality, then $\alpha$ is a limit ordinal and $\omega^\alpha$ cannot be the Szlenk index a Banach space since $\cf{\omega^\alpha} = \cf{\alpha}\geqslant \omega_1$. In view of this fact and Proposition~\ref{weexist}, a complete classification of values of the Szlenk index of Banach spaces will be achieved if one establishes an affirmative answer to the following question, which we believe to be open:
\begin{question}\label{valueset}
Let $\alpha$ be an ordinal with $\cf{\alpha} = \omega$. Does there exist a Banach space with Szlenk index equal to $\omega^\alpha$?
\end{question}

A partial answer to Question~\ref{valueset} is found in \cite{Odell2007} where it is shown that if $\mathcal{T}_{\omega^\alpha}$ denotes the $\omega^\alpha$th {\cyril} space, where $\alpha <\omega_1$, then $\mszlenk{\mathcal{T}_{\omega^\alpha}} = \omega^{\omega^{\alpha+1}}$. The values taken by the Szlenk index on the class of all operators between Banach spaces will be determined in Proposition~\ref{copequiv} below.


To conclude the current section, we now apply Proposition~\ref{weexist} to obtain, amongst other things, a characterization of those limit ordinals $\alpha$ for which the operator ideal $\bigcup_{\beta<\alpha}\szlenkop{\alpha}$ is closed.
\begin{prop}\label{copequiv}
Let $\alpha>0$ be an ordinal. The following are equivalent:

\nr{i} $\cf{\alpha}\geqslant \omega_1$.

\nr{ii} $\omega^\alpha$ is not the Szlenk index of any operator between Banach spaces.

\nr{iii} $\szlenkop{\alpha} = \bigcup_{\beta < \alpha}\szlenkop{\beta}$.

\nr{iv} $\alpha$ is a limit ordinal and $\bigcup_{\beta < \alpha}\szlenkop{\beta}$ is closed.
\end{prop}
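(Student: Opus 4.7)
My plan is to establish the equivalences by closing the cycle (i)$\Rightarrow$(ii)$\Rightarrow$(iii)$\Rightarrow$(iv)$\Rightarrow$(i). The two facts I will use repeatedly are Proposition~\ref{collection}(iii), giving $\mszlenk{T}=\omega^\gamma$ for every Asplund $T$, and the representation $\mszlenk{T}=\sup_{n\in\nat}\meeszlenk{1/n}{T}$ as a countable supremum of non-limit ordinals. For (i)$\Rightarrow$(ii): suppose $\cf{\alpha}\geq\omega_1$, so in particular $\alpha$ is a limit, whence $\omega^\alpha$ is a limit ordinal with $\cf{\omega^\alpha}=\cf{\alpha}\geq\omega_1$. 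An operator $T$ with $\mszlenk{T}=\omega^\alpha$ would then express the limit $\omega^\alpha$ as a countable supremum of non-limits, which cannot attain its value, forcing $\cf{\omega^\alpha}\leq\omega$ — contradiction.

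For (ii)$\Rightarrow$(iii), monotonicity of $\szlenkop{\cdot}$ in its index gives $\bigcup_{\beta<\alpha}\szlenkop{\beta}\subseteq \szlenkop{\alpha}$ for free, and for the reverse inclusion I would write $\mszlenk{T}=\omega^\gamma$ with $\gamma\leq\alpha$ for $T\in\szlenkop{\alpha}$; (ii) rules out $\gamma=\alpha$, so $T\in\szlenkop{\gamma}$ with $\gamma<\alpha$. For (iii)$\Rightarrow$(iv): closedness of the union is automatic from Theorem~\ref{idealthm} once we know it equals $\szlenkop{\alpha}$; and $\alpha$ must be a limit, since if $\alpha=\gamma+1$ the union would collapse to $\szlenkop{\gamma}$ by monotonicity, whereas Proposition~\ref{weexist} produces a Banach space $E$ with $\mszlenk{E}=\omega^{\gamma+1}$ whose identity lies in $\szlenkop{\gamma+1}$ but not in $\szlenkop{\gamma}$.

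The substantive step is (iv)$\Rightarrow$(i), which I would argue by contraposition. If $\cf{\alpha}\leq\omega$ and $\alpha$ is a successor, the limit-ordinal clause of (iv) fails directly; otherwise $\alpha$ is a limit with $\cf{\alpha}=\omega$, and I must exhibit $\bigcup_{\beta<\alpha}\szlenkop{\beta}$ as non-closed. Pick a strictly increasing cofinal sequence $\beta_n\nearrow\alpha$ and, using Proposition~\ref{weexist}, choose Banach spaces $E_n$ with $\mszlenk{E_n}=\omega^{\beta_n+1}$; since $\alpha$ is a limit, each $\beta_n+1<\alpha$. Form the operator $T=\big(\bigoplus_n n^{-1}I_{E_n}\big)_2$ on $E=\big(\bigoplus_n E_n\big)_2$ and the truncations $V_N=\big(\bigoplus_{n\leq N}n^{-1}I_{E_n}\big)_2$ extended by zero on the tail. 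Then $\|T-V_N\|=(N+1)^{-1}\to 0$; each $V_N$ factors through the finite direct sum $\big(\bigoplus_{n\leq N}E_n\big)_2$, whose Szlenk index is $\omega^{\beta_N+1}$ by iterating Proposition~\ref{collection}(v), so $V_N\in\szlenkop{\beta_N+1}\subseteq\bigcup_{\beta<\alpha}\szlenkop{\beta}$; meanwhile $T$ factors each $n^{-1}I_{E_n}$ via the canonical coordinate injection and projection, so the ideal property of $\szlenkop{\cdot}$ yields $\mszlenk{T}\geq \mszlenk{n^{-1}I_{E_n}}=\omega^{\beta_n+1}$ for every $n$, hence $\mszlenk{T}\geq\omega^\alpha$ and $T\notin\bigcup_{\beta<\alpha}\szlenkop{\beta}$. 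The principal obstacle is this final construction, whose success hinges on simultaneously achieving norm convergence $V_N\to T$ and the lower bound on $\mszlenk{T}$, which the scaling factor $n^{-1}$ neatly balances against the sup-norm of the $\ell_2$-direct sum.
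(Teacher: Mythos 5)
Your proposal is correct and follows essentially the same route as the paper: the cycle (i)$\Rightarrow$(ii)$\Rightarrow$(iii)$\Rightarrow$(iv)$\Rightarrow$(i), with the cofinality argument for (i)$\Rightarrow$(ii), Proposition~\ref{collection}(iii) for (ii)$\Rightarrow$(iii), Proposition~\ref{weexist} to rule out successors in (iii)$\Rightarrow$(iv), and the same diagonal $\ell_2$-direct-sum operator $T=(\bigoplus_n n^{-1}I_{E_n})_2$ with truncations for (iv)$\Rightarrow$(i). The only cosmetic difference is that the paper writes the coefficients as $(n+1)^{-1}$ and packages the truncation as $A_mT$ for a coordinate projection $A_m$, which is the same operator as your $V_N$.
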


\begin{proof} We will show that (i)$\Rightarrow$(ii)$\Rightarrow$(iii)$\Rightarrow$(iv)$\Rightarrow$(i).

To see that (i)$\Rightarrow$(ii), suppose that there exists an operator $T$ such that $\omega^\alpha = \mszlenk{T} = \sup\{\meeszlenk{1/n}{T}\mid n\in\nat\}$. Then $\cf{\alpha}\leqslant \cf{\omega^\alpha} =\omega<\omega_1$.

The implication (ii)$\Rightarrow$(iii) is immediate from Proposition~\ref{collection}(iii).

Now suppose that (iii) holds. Then $\bigcup_{\beta < \alpha}\szlenkop{\beta}$ is closed by Theorem~\ref{idealthm}. Moreover, $\alpha$ is a limit ordinal. Indeed, otherwise we may write $\alpha = \zeta +1$, where $\zeta$ is an ordinal, and by Proposition~\ref{weexist} there exists a Banach space $E$ such that $I_E \in \szlenkop{\zeta+1}\setminus \szlenkop{\zeta} = \szlenkop{\alpha}\setminus \bigcup_{\beta < \alpha}\szlenkop{\beta} = \emptyset$, which is absurd.

Finally, we show that (iv)$\Rightarrow$(i). Suppose by way of a contraposition that $\cf{\alpha} = \omega$ and let $\set{\alpha_n \mid n<\omega}\subseteq \alpha$ be cofinal in $\alpha$. Then $\set{\alpha_n +1 \mid n<\omega}$ is also cofinal in $\alpha$, and $\bigcup_{n<\omega} \szlenkop{\alpha_n +1} = \bigcup_{\beta <\alpha}\szlenkop{\beta}$. So to complete the proof, it suffices to construct an operator $T \in \overline{\bigcup_{n<\omega} \szlenkop{\alpha_n +1}} \setminus \bigcup_{n<\omega} \szlenkop{\alpha_n +1}$. To this end, for each $n<\omega$ let $E_n$ be a Banach space whose Szlenk index is $\omega^{\alpha_n+1}$ (c.f. Proposition~\ref{weexist}), and set $E = (\bigoplus_{n<\omega}E_n)_{2}$. Define $T \in \allop (E)$ by setting $T(x_n)_{n<\omega} = ((n+1)^{-1}x_n)_{n<\omega}$ for each $(x_n)_{n<\omega} \in E$. Since $T$ factors $I_{E_n}$ for each $n<\omega$, we have \[\mszlenk{T} \geqslant \sup \set{\mszlenk{E_n}\mid n<\omega} = \sup\set{\omega^{\alpha_n+1}\mid n<\omega} = \omega^\alpha\, ,\] hence $T\notin \bigcup_{n<\omega} \szlenkop{\alpha_n +1}$. On the other hand, with $A_m$ ($m<\omega$) denoting the operator on $E$ that sends $(x_n)_{n<\omega} \in E$ to the element $(y_n)_{n<\omega}$ of $E$ that satisfies $y_n = x_n$ if $n\leqslant m$, and $y_n =0$ otherwise, we have that $I_{E_1 \oplus \ldots \oplus E_m}$ factors $A_mT$ for all $m<\omega$, hence \[ \mszlenk{A_mT} \leqslant \mszlenk{E_1 \oplus \ldots \oplus E_m} = \max \set{\omega^{\alpha_i +1}\mid 1\leqslant i \leqslant m}
. \] In particular, $A_mT \in \bigcup_{n<\omega} \szlenkop{\alpha_n +1}$ for $m<\omega$. As $\lim_{m\rightarrow \omega}\norm{A_mT - T} =0$, it follows that $T \in \overline{\bigcup_{n<\omega} \szlenkop{\alpha_n +1}(E)}$.
\end{proof}

\begin{remark}
The existence of an operator of Szlenk index $\omega^\alpha$ whenever $\cf{\alpha}\leqslant \omega$ (Proposition~\ref{copequiv}(ii)$\Rightarrow$(i)) is used in the proof of \cite[Theorem~5.1]{Brookerc}, where it is shown that if $\beta$ is an ordinal with $\cf{\beta}\leqslant \omega$, then $\szlenkop{\omega^\beta}$ lacks have the factorization property.
\end{remark}

\section{Proof of Lemma~\ref{frount}}\label{techsect}
Our goal in this section is to prove Lemma~\ref{frount}. We proceed via a sequence of lemmas, whose general theme is to establish upper bounds (in terms of set containment) on various derived sets $s_\eps^\alpha (K)$, where $K$ is $w\dual$-compact, $\alpha$ is an ordinal and $\eps >0$. The sets $K$ that we shall consider are typically direct products, for it will be seen later that the set $B_q(K_i \mid 1\leqslant i \leqslant n)$ in the statement of Lemma~\ref{frount} can be `approximated' from above (with respect to set containment) in a convenient way by a finite union of direct products of $w\dual$-compact sets. Indeed, this so-called approximation of $B_q(K_i \mid 1\leqslant i \leqslant n)$ plays a key role in our proof. 

We mention another important aspect of our results in this section. As noted earlier, Lemma~\ref{frount} is used to establish the implication (ii)$\Rightarrow$(iii) of Proposition~\ref{unibound}. Note that in the statement of Proposition~\ref{unibound}(iii), there is no (finite) upper bound on the cardinality of the finite sets $\eff \in \Lambda\fin$. It is thus important for us in this section, when aiming for estimates of $\eps$-Szlenk indices of direct products, to obtain estimates that are independent of the (finite) number of factors in a given direct product. Our efforts in this regard are reflected in the fact that the numbers $M$ and $n$ in the statement of Lemma~\ref{frount} are independent of one another.

We first establish the following general result regarding the behaviour of $s_\eps^\alpha$ derivatives of finite unions of $w\dual$-compact sets.

\begin{lemma}\label{unionlemma}
Let $E$ be a Banach space, $K_1, \ldots , K_n \subseteq E\dual$ $w\dual$-compact sets and $\eps >0$. Let $\alpha$ be an ordinal and $m< \omega$. Then
\begin{itemize}
\item[{\rm (i)}]$\mdset{\alpha}{\eps}{\bigcup_{i=1}^nK_i} \subseteq \bigcup_{i=1}^n \mdset{\alpha}{\eps/2}{K_i}$.
\item[{\rm (ii)}]$\mdset{mn}{\eps}{\bigcup_{i=1}^nK_i} \subseteq \bigcup_{i=1}^n \mdset{m}{\eps}{K_i}$.
\item[{\rm (iii)}] If $\alpha$ is a limit ordinal, then $\mdset{\alpha}{\eps}{\bigcup_{i=1}^nK_i} \subseteq \bigcup_{i=1}^n \mdset{\alpha}{\eps}{K_i}$.
\end{itemize}
\end{lemma}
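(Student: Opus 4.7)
The plan is to prove the three parts in order, with (i) and (ii) both feeding into (iii). Throughout I will exploit that every derived set $s^\alpha_\eps(K)$ is $w^*$-closed (which lets one separate points from derived sets by $w^*$-open neighbourhoods), that $s_\eps$ is monotone in its argument, and the semigroup identity $s^{\gamma+\delta}_\eps = s^\delta_\eps \circ s^\gamma_\eps$. For (i), I would induct transfinitely on $\alpha$. The successor step is the core: assuming for contradiction that $x \in s^{\alpha+1}_\eps(\bigcup_j K_j)$ and $x \notin s^{\alpha+1}_{\eps/2}(K_j)$ for every $j$, for each $j$ pick a $w^*$-open $V_j \ni x$ with $V_j \cap s^\alpha_{\eps/2}(K_j) = \emptyset$ (if $x$ lies outside this $w^*$-closed set) or $\diam(V_j \cap s^\alpha_{\eps/2}(K_j)) \leq \eps/2$ (otherwise), and set $V = \bigcap_j V_j$. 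By the inductive hypothesis, $V \cap s^\alpha_\eps(\bigcup_j K_j) \subseteq V \cap \bigcup_j s^\alpha_{\eps/2}(K_j)$; each non-empty piece $V \cap s^\alpha_{\eps/2}(K_j)$ has diameter at most $\eps/2$ and contains $x$, so the triangle inequality through $x$ yields $\diam(V \cap s^\alpha_\eps(\bigcup_j K_j)) \leq \eps$, contradicting $x \in s^{\alpha+1}_\eps(\bigcup_j K_j)$. The limit case for (i) follows from a finite pigeonhole on the indices $j$ combined with monotonicity of $\beta \mapsto s^\beta_{\eps/2}(K_j)$.

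For (ii), I would first establish the case $m = 1$, namely $s^n_\eps(\bigcup_j K_j) \subseteq \bigcup_j s^1_\eps(K_j)$, and then bootstrap to general $m$. Assume $x$ violates the $m = 1$ inclusion; for each $j$ pick a $w^*$-open $V_j \ni x$ with $V_j \cap K_j = \emptyset$ if $x \notin K_j$ and $\diam(V_j \cap K_j) \leq \eps$ otherwise, and set $V = \bigcap_j V_j$ and $J = \{j : x \in K_j\}$. The key combinatorial claim, proved by induction on $k$, is that every $z \in V \cap s^k_\eps(\bigcup_j K_j)$ lies in at least $k+1$ of the sets $K_j$ with $j \in J$. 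The inductive step runs thus: if $z$ lay in exactly $k+1$ such sets, indexed by $J_z \subseteq J$, then separating $z$ from each $K_j$ with $j \in J \setminus J_z$ and intersecting with $V$ produces a $w^*$-open $W \ni z$ that meets only the $K_j$ with $j \in J_z$; by the inductive hypothesis every element of $W \cap s^k_\eps(\bigcup_j K_j)$ must lie in $\bigcap_{j \in J_z} K_j \subseteq V \cap K_{j_0}$ for any fixed $j_0 \in J_z$, whence $\diam(W \cap s^k_\eps(\bigcup_j K_j)) \leq \eps$, contradicting $z \in s^{k+1}_\eps(\bigcup_j K_j)$. Applying the claim at $k = n$ with $z = x$ forces $|J| \geq n+1$, impossible since $|J| \leq n$, giving the $m = 1$ case. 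The general $m$ then follows by induction on $m$: $s^{(m+1)n}_\eps(\bigcup_j K_j) = s^n_\eps(s^{mn}_\eps(\bigcup_j K_j)) \subseteq s^n_\eps(\bigcup_j s^m_\eps(K_j)) \subseteq \bigcup_j s^1_\eps(s^m_\eps(K_j)) = \bigcup_j s^{m+1}_\eps(K_j)$, by monotonicity, the inductive hypothesis, and the $m = 1$ case applied to the $w^*$-compacta $s^m_\eps(K_j)$.

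For (iii) I would induct transfinitely on the limit ordinal $\alpha$. The base $\alpha = \omega$ is immediate from (ii): for each $k < \omega$, (ii) places $x \in s^{kn}_\eps(\bigcup_j K_j) \subseteq \bigcup_j s^k_\eps(K_j)$, and finite pigeonhole on $\{1, \ldots, n\}$ together with monotonicity of $k \mapsto s^k_\eps(K_{j_0})$ produces a single $j_0$ with $x \in s^k_\eps(K_{j_0})$ for all $k < \omega$, i.e., $x \in s^\omega_\eps(K_{j_0})$. For a general limit $\alpha > \omega$, let $L = \{\gamma < \alpha : \gamma \text{ is a limit ordinal}\}$. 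If $\sup L = \alpha$ (the case, e.g., of $\alpha = \omega^2,\omega^\omega,\omega_1$), the inductive hypothesis supplies $s^\gamma_\eps(\bigcup_j K_j) \subseteq \bigcup_j s^\gamma_\eps(K_j)$ for each $\gamma \in L$, and the same finite pigeonhole plus monotonicity yields a single $j_0$ with $x \in s^\gamma_\eps(K_{j_0})$ for all $\gamma \in L$, hence for all $\gamma < \alpha$, so $x \in s^\alpha_\eps(K_{j_0})$. Otherwise $\sup L < \alpha$, and a short check shows $\alpha = \gamma^* + \omega$ where $\gamma^* := \sup L$ is either $0$ (reducing to the base case) or itself a limit ordinal; in the latter case the shift identity gives $s^\alpha_\eps(\bigcup_j K_j) = s^\omega_\eps(s^{\gamma^*}_\eps(\bigcup_j K_j)) \subseteq s^\omega_\eps(\bigcup_j s^{\gamma^*}_\eps(K_j)) \subseteq \bigcup_j s^\omega_\eps(s^{\gamma^*}_\eps(K_j)) = \bigcup_j s^\alpha_\eps(K_j)$, using monotonicity, the inductive hypothesis at $\gamma^*$, and the base case of (iii) applied to the $w^*$-compact family $\{s^{\gamma^*}_\eps(K_j)\}_j$. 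The hard part will be exactly this limit step of (iii): a direct use of (i) at a limit $\alpha$ only yields $s^\alpha_{\eps/2}(K_{j_0})$, losing the factor of two in $\eps$ that (iii) forbids, so the crucial device is to run the $\omega$-pigeonhole inside the already-derived $w^*$-compacta $s^{\gamma^*}_\eps(K_j)$ rather than inside $\bigcup_j K_j$ itself.
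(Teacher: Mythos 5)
Your proposal is correct, and for parts (i) and (ii) it runs essentially parallel to the paper's proof. In (i) you make explicit the small observation that whenever $V\cap s^{\alpha}_{\eps/2}(K_j)\neq\emptyset$ the point $x$ itself lies in that piece (so both $x_1$ and $x_2$ can be connected through $x$), which is precisely the step the paper invokes without spelling it out. In (ii) the paper phrases the $m=1$ case as a set-level containment, $s_\eps^{l}\bigl(\bigcup_i K_i\bigr)\subseteq\bigl(\bigcup_i s_\eps(K_i)\bigr)\cup\bigcup_{|\mathcal F|=l+1}\bigcap_{i\in\mathcal F}K_i$, while you phrase the identical combinatorial fact pointwise as ``every $z\in V\cap s^k_\eps(\bigcup_j K_j)$ lies in at least $k+1$ of the $K_j$ with $j\in J$''; these are two formulations of the same inductive pigeonhole. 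Part (iii) is where you take a genuinely different route. The paper first reduces to $n=2$, then writes a limit ordinal via Cantor normal form as a finite sum $\alpha=\omega^{\beta_1}+\cdots+\omega^{\beta_k}$, iterates the shift identity to reduce to the single case $\alpha=\omega^{\beta}$, and finally proves that case by a separate induction on $\beta$ (base $\beta=1$ from (ii), successor step by extending to $\omega^\beta\cdot l$ for all $l<\omega$ and taking a cofinal intersection, limit step by another cofinality argument). You instead dichotomise a limit $\alpha>\omega$ according to whether the limit ordinals below $\alpha$ are cofinal in $\alpha$: in the cofinal case you invoke the inductive hypothesis at each smaller limit and finish by finite pigeonhole, and in the non-cofinal case you identify $\alpha=\gamma^{*}+\omega$ with $\gamma^{*}=\sup L$ a smaller limit, and combine the shift identity, the inductive hypothesis at $\gamma^{*}$, and the $\omega$-base-case applied to the already-derived compacta $s^{\gamma^{*}}_\eps(K_j)$. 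This bypasses Cantor normal form and the reduction to $n=2$ entirely, and folds the paper's ``successor of $\beta$'' step and its final iteration over summands into the single $\gamma^{*}+\omega$ case — a modest but genuine streamlining, resting on exactly the same ingredients (monotonicity, the semigroup identity $s^{\gamma+\delta}_\eps=s^\delta_\eps\circ s^\gamma_\eps$, $w^{\ast}$-closedness of derived sets, and finite pigeonhole over cofinal families).
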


\begin{proof}
(i) holds trivially for $\alpha = 0$. Suppose that $\beta$ is an ordinal such that (i) holds for all $\alpha \leqslant \beta$ and let $x\in E\dual \setminus \bigcup_{i=1}^ns^{\beta+1}_{\eps/2}(K_i)$. Then for $1\leqslant i\leqslant n$ there is $w\dual$-open $U_i\ni x$ such that $\diam (U_i\cap s^\beta_{\eps/2}(K_i))\leqslant \eps/2$. It follows that for $x_1,\, x_2 \in (\bigcap_{i=1}^nU_i) \cap (s^\beta_\eps(\bigcup_{i=1}^n K_i))$ we have \[ \norm{x_1 -x_2}\leqslant \norm{x_1 - x} +\norm{x-x_2}\leqslant \frac{\eps}{2} + \frac{\eps}{2}= \eps \, ,\] hence $\diam ((\bigcap_{i=1}^nU_i) \cap (s^\beta_\eps (\bigcup_{i=1}^n K_i)))\leqslant \eps$. In particular, $x\notin \mdset{\beta+1}{\eps}{\bigcup_{i=1}^nK_i}$, and so (i) passes to successor ordinals. 

Suppose that $\beta$ is a limit ordinal such that (i) holds for all $\alpha<\beta$. Then
\begin{equation}\label{tingleep}
s^{\beta}_{\eps}\hspace{-1mm}\left({\bigcup_{i=1}^n K_i}\right) = \bigcap_{\alpha<\beta}s^{\alpha}_{\eps}\hspace{-1mm}\left( {\bigcup_{i=1}^nK_i}\right) \subseteq \bigcap_{\alpha <\beta}\, \bigcup_{i=1}^n\mdset{\alpha}{\eps /2}{K_i} .
\end{equation}
Let $x\in \mdset{\beta}{\eps}{\bigcup_{i=1}^n K_i}$. Then for each $\alpha < \beta$ we may choose $i_\alpha \in \set{1, \ldots , n}$ such that $x\in \mdset{\alpha}{\eps/2}{K_{i_\alpha}}$, and for some $i' \in \set{1, \ldots , n}$ the set $\set{\alpha < \beta \mid i_\alpha = i'}$ is cofinal in $\beta$. Hence
\begin{equation}\label{peepyting}
x \in \bigcap_{i_\alpha = i'}\mdset{\alpha}{\eps /2}{K_{i'}} = \bigcap_{\alpha<\beta}\mdset{\alpha}{\eps/2}{K_{i'}} = \mdset{\beta}{\eps/2}{K_{i'}} \subseteq \bigcup_{i=1}^n\mdset{\beta}{\eps /2}{K_i} .
\end{equation}
Since $x\in \mdset{\beta}{\eps}{\bigcup_{i=1}^n K_i}$ was arbitrary, (i) passes to limit ordinals, and thus holds for all ordinals $\alpha$.

Statement (ii) is trivial for $m=0$. To see that it is true for $m=1$, we first let $\mathbb{P}_k = \set{\eff\subseteq \set{1, \ldots , n} \mid \abs{\eff} = k}$, $k \in \nat$. It suffices to show that for all $l < \omega$,
\begin{equation}\label{wagjatch}
s_\eps^l \hspace{-1mm} \left( \bigcup_{i=1}^n K_i \right) \subseteq \left( \bigcup_{i=1}^n \mdset{\mbox{}}{\eps}{K_i} \right) \cup \Bigg( \bigcup_{\eff \in \mathbb{P}_{l+1}} \bigcap_{i \in\eff} K_i \Bigg) ,
\end{equation}
Indeed, taking $l=n$ in (\ref{wagjatch}) gives (ii) with $m=1$ (since $\bigcup_{\eff \in \mathbb{P}_{l+1}} \bigcap_{i \in\eff} K_i  =\emptyset$ when $l=n$). It is clear that (\ref{wagjatch}) holds for $l=0$. Suppose now $l' <\omega$ is such that (\ref{wagjatch}) holds for $l = l'$; we show that it holds also for $l = l'+1$. Let
\begin{equation*}
x \in E\dual \Bigg\backslash \left( \Bigg( \bigcup_{i=1}^n \mdset{\mbox{}}{\eps}{K_i} \Bigg) \cup \Bigg( \bigcup_{\gee \in \mathbb{P}_{l'+2}} \bigcap_{j \in \gee} K_j \Bigg) \right) .
\end{equation*}
We want to show that $x\notin s^{l'+1}_\eps (\bigcup_{i=1}^nK_i)$, so by the induction hypothesis it suffices to assume that
\[
x\in s^{l'}_\eps \hspace{-1mm}\left( \bigcup_{i=1}^nK_i\right) \subseteq \left( \bigcup_{i=1}^n \mdset{\mbox{}}{\eps}{K_i} \right) \cup \Bigg( \bigcup_{\eff \in \mathbb{P}_{l'+1}}\bigcap_{i \in\eff} K_i \Bigg) \, ,
\]
hence
\begin{equation}\label{tiptoptap}
x\in \Bigg( \bigcup_{\eff \in \mathbb{P}_{l'+1}} \bigcap_{i \in\eff} K_i \Bigg) \Bigg\backslash \left( \Bigg( \bigcup_{i=1}^n \mdset{\mbox{}}{\eps}{K_i} \Bigg) \cup \Bigg( \bigcup_{\gee \in \mathbb{P}_{l'+2}} \bigcap_{j \in \gee} K_j \Bigg) \right)\, .
\end{equation}
By (\ref{tiptoptap}) there is (a unique) $\eff_x \in \mathbb{P}_{l'+1}$ such that $x \in \left( \bigcap_{i \in \eff_x}K_i \right)\setminus \left( \bigcup_{i' \notin \eff_x}K_{i'} \right)$. For each $i \in \eff_x$ let $U_i \ni x$ be $w\dual$-open and such that $\diam (U_i \cap K_i ) \leqslant \eps$ and $U_i \cap \bigcup_{i' \notin \eff_x}K_{i'} = \emptyset$. Then $U = \bigcap_{i \in \eff_x}U_i$ is a $w\dual$-neighbourhood of $x$ and
\begin{eqnarray*}
U \cap \left( \Bigg( \bigcup_{i=1}^n \mdset{\mbox{}}{\eps}{K_i} \Bigg) \cup \Bigg( \bigcup_{\eff \in \mathbb{P}_{l'+1}} \bigcap_{i \in \eff} K_i \Bigg) \right) &=& U \cap \bigcap_{i \in \eff_x}K_i = \bigcap_{i \in \eff_x} U_i \cap K_i
\end{eqnarray*}
has norm diameter not exceeding $\eps$ (because $\diam (U_i \cap K_i)\leqslant \eps$ for $i \in \mathcal{F}_x$). It follows then by (\ref{wagjatch}) and the induction hypothesis on $l = l'$ that 
\[
x \notin s_\eps \hspace{-0.5mm} \left( \Bigg( \bigcup_{i=1}^n \mdset{\mbox{}}{\eps}{K_i} \Bigg) \cup \Bigg( \bigcup_{\eff \in \mathbb{P}_{l'+1}} \bigcap_{i \in\eff} K_i \Bigg) \right)  \supseteq s_\eps^{l'+1} \hspace{-0.5mm} \Bigg( \bigcup_{i=1}^n K_i \Bigg) ,
\]
as required. In particular, (\ref{wagjatch}) holds for all $l<\omega$ and (ii) holds for $m=1$.

Suppose $h<\omega$ is such that (ii) holds for all $m\leqslant h$. Then
\[
s^{(h+1)n}_{\eps}\hspace{-1mm} \left( \bigcup_{i=1}^nK_i \right) \subseteq s_\eps^n \hspace{-1mm} \left(\bigcup_{i=1}^n \mdset{h}{\eps}{K_i} \right) \subseteq \bigcup_{i=1}^n \mdset{h+1}{\eps}{K_i},
\]
so that (ii) holds for $m=h+1$, and thus for all $m$ by induction.

For (iii), we prove the case $n=2$, with the general case then following from this case and a straightforward induction on $n$. So we want to show that if $\alpha$ is a nonzero limit ordinal, then
\begin{equation}\label{fudget}
\mdset{\alpha}{\eps}{K_1 \cup K_2} \subseteq \mdset{\alpha}{\eps}{K_1 }\cup \mdset{\alpha}{\eps}{ K_2} .
\end{equation}
To this end, it suffices to consider the case $\alpha = \omega^\beta$, $\beta >0$, since the general case follows from finitely many iterations of this case. Indeed, every limit ordinal $\alpha$ is the sum of finitely many ordinals of the form $\omega^\beta$, $\beta >0$. We proceed by induction on $\beta$. For $\beta = 1$ we note that, by (ii),
\begin{equation}\label{sandwich}
\mdset{\omega}{\eps}{K_1 \cup K_2} = \bigcap_{m< \omega} \mdset{2m}{\eps}{K_1 \cup K_2}\subseteq  \bigcap_{m< \omega} \left( \mdset{m}{\eps}{K_1} \cup \mdset{m}{\eps}{K_2}\right) ,
\end{equation}
and then a similar argument to that used to obtain (\ref{peepyting}) from (\ref{tingleep}) yields (iii) for $\alpha = \omega$.
Suppose now that (\ref{fudget}) holds for $\alpha = \omega^\beta$, some $\beta >0$. Then a straightforward induction on $l < \omega$ shows that for all such $l$ we have
\begin{equation}\label{chewypooey}
\mdset{\omega^{\beta}\cdot l}{\eps}{K_1 \cup K_2} \subseteq \mdset{\omega^{\beta}\cdot l}{\eps}{K_1 }\cup \mdset{\omega^{\beta}\cdot l}{\eps}{ K_2} \, .
\end{equation}
(\ref{chewypooey}) and an argument similar to that used to obtain (\ref{peepyting}) from (\ref{tingleep}) yields
\[
\mdset{\omega^{\beta+1}}{\eps}{K_1 \cup K_2} \subseteq \mdset{\omega^{\beta+1} }{\eps}{K_1} \cup \mdset{\omega^{\beta+1}}{\eps}{K_2}\, ;
\]
in particular, (iii) passes to successor ordinals. The straightforward proof that (iii) passes to limit ordinals uses, once again, a similar cofinality argument to that used to obtain (\ref{peepyting}) from (\ref{tingleep}) above.\end{proof}

The next three lemmas are specifically concerned with $s_\eps^\alpha$ derivatives of direct products of $w\dual$-compact sets, considered as $w\dual$-compact subsets of dual spaces of direct sums of Banach spaces. 

We require more notation. Given Banach spaces $E_1, \ldots , E_n$, nonempty $w\dual$-compact sets $K_1\dual \subseteq E_1\dual, \ldots , K_n\dual \subseteq E_n\dual$, $1\leqslant q<\infty$ and $a_1, \ldots , a_n \geqslant 0$ real numbers such that $\sum_{i=1}^na_i^q \leqslant 1$, for each $\eps > 0$ we define
\[
A_\eps := \set{(\eps_i)_{i=1}^n \in \real^n \, \left| \, \sum_{i=1}^n a_i^q\eps_i^q \geqslant \eps^q \mbox{ and } 0\leqslant \eps_i \leqslant \diam (K_i), \, 1\leqslant i \leqslant n \right. } .
\] In all places where we use the notation $A_\eps$, the $w\dual$-compact sets $K_1, \ldots , K_n$, real numbers $a_1, \ldots , a_n$ and $1\leqslant q <\infty$ will be fixed, so no ambiguity should arise from this notation. It is elementary to see that $A_\eps = \emptyset$ if and only if $\eps^q > \sum_{i=1}^n[a_i \cdot \diam (K_i)]^q$. 

We adopt the notational convention that $s^\alpha_0 (K) = K$ for every ordinal $\alpha$ and $w\dual$-compact $K$.

\begin{lemma}\label{techlem1}
Let $E_1, \ldots , E_n$ be Banach spaces and $K_1 \subseteq E_1\dual , \ldots , K_n\subseteq E_n\dual$ $w\dual$-compact sets. Let $1\leqslant q<\infty$, $\eps >0$ and let $a_1, \ldots , a_n \geqslant 0$ be real numbers such that $\sum_{i=1}^na_i^q \leqslant 1$. Let $p$ be predual to $q$ and consider $\prod_{i=1}^na_iK_i$ as a subset of $(\bigoplus_{i=1}^nE_i)_{p}\dual$. Then, for every $\delta \in (0, \, \eps)$,
\[ 
s_\eps \hspace{-0.5mm} \Bigg( \prod_{i=1}^n a_i K_i \Bigg) \subseteq \bigcup_{(\eps_i) \in A_\delta} \,\,\prod_{i=1}^n a_i s_{\eps_i} (K_i)\, .
\]
\end{lemma}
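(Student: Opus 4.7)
The plan is to take $x$ in the LHS, write $x = (a_1 x_1, \ldots, a_n x_n)$ with $x_i \in K_i$ (choosing $x_i \in K_i$ arbitrarily when $a_i = 0$), and produce a tuple $(\eta_i)_{i=1}^n \in A_\delta$ such that $x_i \in s_{\eta_i}(K_i)$ for each $i$. For each $i$ I would introduce the local $w\dual$-oscillation
\[
\tau_i := \inf\set{\diam (V\cap K_i) \mid V \text{ a }w\dual\text{-open subset of } E_i\dual \text{ containing } x_i}\, ,
\]
and record two elementary facts: $x_i \in s_\eta (K_i)$ whenever $0\leqslant \eta < \tau_i$, and $\tau_i \leqslant \diam (K_i)$.

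The crucial step is the numerical inequality $\sum_{i=1}^n a_i^q \tau_i^q \geqslant \eps^q$, which I would prove by contrapositive. Supposing the inequality fails, I would choose $\gamma > 0$ so small that $\sum_{i=1}^n a_i^q (\tau_i + \gamma)^q < \eps^q$, and for each $i$ with $a_i > 0$ pick a $w\dual$-open $V_i \ni x_i$ in $E_i\dual$ with $\diam (V_i \cap K_i) < \tau_i + \gamma$ (taking $V_i = E_i\dual$ when $a_i = 0$). Because the $w\dual$-topology on $(\bigoplus_{i=1}^n E_i)_p\dual$ coincides, via the identification $(\bigoplus_{i=1}^n E_i)_p\dual = (\bigoplus_{i=1}^n E_i\dual)_q$, with the product of the coordinate $w\dual$-topologies (the index set being finite), the set $W := \prod_{i=1}^n a_i V_i$ is a $w\dual$-open neighborhood of $x$. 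For $y, z \in W \cap \prod_{i=1}^n a_i K_i$, each $i$th-coordinate difference has norm less than $a_i (\tau_i + \gamma)$, and the $\ell_q$ formula for the dual norm then yields $\norm{y-z}^q = \sum_{i=1}^n \norm{y_i - z_i}^q < \sum_{i=1}^n a_i^q (\tau_i + \gamma)^q < \eps^q$, contradicting $x \in s_\eps (\prod_{i=1}^n a_i K_i)$.

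With $\sum_{i=1}^n a_i^q \tau_i^q \geqslant \eps^q > \delta^q$ in hand, a standard continuity argument yields numbers $\eta_i$ with $0 \leqslant \eta_i < \tau_i$ (set $\eta_i = 0$ where $\tau_i = 0$) and $\sum_{i=1}^n a_i^q \eta_i^q \geqslant \delta^q$; automatically $\eta_i \leqslant \diam (K_i)$, so $(\eta_i)_{i=1}^n \in A_\delta$. Since $\eta_i < \tau_i$, we have $x_i \in s_{\eta_i}(K_i)$ whenever $a_i > 0$, while for $a_i = 0$ the $i$th coordinate of $x$ is $0 \in a_i s_{\eta_i}(K_i) = \set{0}$ (using the convention $s_0 (K_i) = K_i$ when $\eta_i = 0$). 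This places $x$ in $\prod_{i=1}^n a_i s_{\eta_i}(K_i)$, completing the proof.

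I expect the main obstacle to be the $w\dual$-topology bookkeeping in the middle paragraph: one must confirm that $W$ is genuinely $w\dual$-open (relying on the identification of the dual of a finite direct sum with the product of duals, together with the fact that scalar multiplication is a $w\dual$-homeomorphism) and that the $\ell_q$ dual-norm formula properly amalgamates the coordinatewise diameter bounds. Everything else reduces to routine perturbation.
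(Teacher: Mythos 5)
Your proof follows the same route as the paper's: introduce the coordinate-wise local $w\dual$-oscillations (the paper's $\delta_i$, your $\tau_i$), establish the key inequality $\sum_{i=1}^n a_i^q\tau_i^q\geqslant\eps^q$, and then perturb downward to obtain a tuple in $A_\delta$ with $\eta_i<\tau_i$ so that $x_i\in s_{\eta_i}(K_i)$. The paper simply asserts that inequality; your contrapositive argument is the natural justification, so you supply a detail the paper suppresses rather than taking a different route.

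One small slip in the middle paragraph: if some $a_i=0$, the set $W=\prod_{i=1}^n a_iV_i$ has the degenerate factor $a_iV_i=\{0\}$ and is therefore not $w\dual$-open, contrary to your assertion (multiplication by $0$ is not a homeomorphism, which is exactly why you invoked nonzero scaling). The fix is immediate: set $W=\prod_{i=1}^n W_i$ with $W_i=a_iV_i$ when $a_i>0$ and $W_i=E_i\dual$ when $a_i=0$. Then $W$ is genuinely $w\dual$-open, coordinates with $a_i=0$ contribute $0$ to both sides of the diameter estimate, and (assuming some $a_i>0$, otherwise $\prod a_iK_i$ is a singleton and the left-hand side of the lemma is empty) the strict inequality $\norm{y-z}^q<\sum a_i^q(\tau_i+\gamma)^q<\eps^q$ goes through as you intended.
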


\begin{proof}
We first suppose that $\eps^q > \sum_{i=1}^n[a_i \cdot \diam (K_i)]^q$. Then $s_\eps ( \prod_{i=1}^n a_i K_i )$ is empty since $\diam (\prod_{i=1}^n a_i K_i)<\eps$. The assertion of the lemma follows.

Suppose now that $\eps^q \leqslant \sum_{i=1}^n[a_i \cdot \diam (K_i)]^q$, so that $A_{\eps'}\neq \emptyset$ for $0<\eps'\leqslant \eps$. Let $\delta \in (0, \, \eps)$, $(a_ix_i)_{i=1}^n \in \mdset{\mbox{}}{\eps}{\prod_{i=1}^na_iK_i}$ and, for $1\leqslant i\leqslant n$, define \[
\delta_i := \inf\set{\diam (K_i \cap U_i)\mid U_i \mbox{ a }w\dual\mbox{-neighbourhood of }x_i}.\] Then $\sum_{i=1}^na_i^q\delta_i^q\geqslant \eps^q >\delta^q $. Let $f: \set{1, \ldots , n} \longrightarrow \real$ be a map such that $\sum_{i=1}^na_i^qf(i)^q\geqslant \delta^q$ and $f(i)\in \set{0}\cup (0, \, \delta_i)$ for all $i$ (note that $[0, \, \delta_i)$ is empty whenever $\delta_i=0$). We \emph{claim} that with $f$ so defined, $x_i\in s_{f(i)}(K_i)$ for $1\leqslant i \leqslant n$. Indeed, if $\delta_i=0$ then $f(i) =0$, hence $x_i \in K_i =s_{f(i)}(K_i)$ by convention. On the other hand, if $\delta_i >0$, then for all $w\dual$-open $U_i \ni x_i$ we have $\diam (K_i \cap U_i) \geqslant \delta_i >f(i) $, hence $x_i \in s_{f(i)}(K_i)$ in this case too. Note that $(f(i))_{i=1}^n\in A_\delta$ since $f(i) \leqslant \delta_i \leqslant \diam (K_i)$ for all $i$ and $\sum_{i=1}^na_i^qf(i)^q\geqslant \delta^q$, hence
\[
(a_ix_i)_{i=1}^n \in \prod_{i=1}^na_i\mdset{\mbox{}}{f(i)}{K_i} \subseteq \bigcup_{(\eps_i)\in A_\delta} \,\,\prod_{i=1}^n a_i s_{\eps_i} (K_i)\, . \qedhere
\]
\end{proof}

\begin{lemma}\label{techlem2}
Let $E_1, \ldots , E_n$ be Banach spaces and $K_1 \subseteq E_1\dual , \ldots , K_n\subseteq E_n\dual$ $w\dual$-compact sets. Let $1\leqslant q<\infty$, $\eps >0$ and let $a_1, \ldots , a_n \geqslant 0$ be real numbers such that $\sum_{i=1}^na_i^q \leqslant 1$. Let $p$ be predual to $q$ and consider $\prod_{i=1}^na_iK_i$ as a subset of $(\bigoplus_{i=1}^nE_i)_{p}\dual$. Then, for every $\delta \in (0, \, \eps)$, $0<m<\omega$ and ordinal $\alpha$,
\begin{equation}\label{fairypoo}
s_\eps^{\omega^\alpha \cdot m} \hspace{-0.5mm} \Bigg( \prod_{i=1}^n a_i K_i \Bigg) \subseteq \bigcup_{(\eps_{i, \, 1}), \ldots , (\eps_{i, \, m}) \in A_{\delta/2}} \,\,\prod_{i=1}^n a_i s_{\eps_{i, \, m}}^{\omega^\alpha}(s_{\eps_{i, \, m-1}}^{\omega^\alpha}(\ldots s_{\eps_{i, \, 1}}^{\omega^\alpha} (K_i)\ldots ))\, .
\end{equation}
\end{lemma}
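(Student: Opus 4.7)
The plan is to prove the lemma by a double induction: outer transfinite on $\alpha$ with inner finite on $m$ at each stage, treating the base $m = 1$ separately via the outer inductive hypothesis across all finite values.

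For the base $m = 1$ we must show
\[
s_\eps^{\omega^\alpha}\Big(\prod_{i=1}^n a_i K_i\Big) \subseteq \bigcup_{(\eps_i) \in A_{\delta/2}} \prod_{i=1}^n a_i s_{\eps_i}^{\omega^\alpha}(K_i).
\]
When $\alpha = 0$, this is exactly Lemma~\ref{techlem1} applied with $\delta/2$ in place of $\delta$ (valid since $\delta/2 \in (0,\eps)$ and $A_\delta \subseteq A_{\delta/2}$). For $\alpha = \beta + 1$, I would exploit $s_\eps^{\omega^{\beta+1}} = \bigcap_{k < \omega} s_\eps^{\omega^\beta \cdot k}$ together with the outer inductive hypothesis at $\beta$ applied for each finite value $m = k$; this yields, for every $k$, a $k$-layer tuple of slabs in $A_{\delta/2}$ certifying membership. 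Via compactness of the parameter box $\prod_i [0,\diam(K_i)]$ and a careful diagonal extraction, one obtains a limit tuple $(\eps_i^\ast)_i \in A_{\delta/2}$ whose entries lower-bound the extracted radii uniformly in $k$, so that monotonicity of $\eps \mapsto s_\eps^\gamma(K)$ forces $y_i \in s_{\eps_i^\ast}^{\omega^{\beta+1}}(K_i)$ for every $i$. Limit ordinals $\alpha$ are handled similarly using a cofinal sequence $(\beta_k)_k$ in $\alpha$.

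For the inner step $m \to m+1$ at fixed $\alpha$, I decompose $s_\eps^{\omega^\alpha \cdot (m+1)} = s_\eps^{\omega^\alpha} \circ s_\eps^{\omega^\alpha \cdot m}$ and apply the inner hypothesis to the inner derivative, whose image is contained in a union of products $\prod a_i L_i^T$. For $y$ in the LHS, fix a tuple $T$ with $y \in \prod a_i L_i^T$, where $L_i^T = s_{\eps_{i,m}}^{\omega^\alpha}\cdots s_{\eps_{i,1}}^{\omega^\alpha}(K_i)$, and then adapt the diameter-infimum argument of Lemma~\ref{techlem1}, employing product $w\dual$-neighborhoods of $y$ together with the $\ell_q$-additivity of diameters they induce, to produce the next slab $(\eps_{i,m+1})_i \in A_{\delta/2}$ satisfying $y_i \in s_{\eps_{i,m+1}}^{\omega^\alpha}(L_i^T)$. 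Concatenating with $T$ gives the required $(m+1)$-layer certificate.

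The main obstacle is that the intermediate derivative set $s_\eps^{\omega^\alpha \cdot m}(\prod a_i K_i)$ is not itself a product, but only contained in an (uncountable) union of products, and $s_\eps$ does not distribute over infinite unions, so Lemma~\ref{unionlemma} is unavailable in the form one would like. The workaround relies on the product $w\dual$-neighborhood basis around $y$, which reduces diameter computations to coordinate-wise $\ell_q$-sums and localizes the derivative analysis to the single product factor $\prod a_i L_i^T$ in which $y$ lies; the factor of $1/2$ separating $\delta$ from $\delta/2$ provides the slack absorbing any loss incurred in passing from the whole union to a single factor.
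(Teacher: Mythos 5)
Your proposal correctly identifies the central obstacle — that the $s_\eps$ derivation does not distribute over the uncountable union $\bigcup_{(\eps_{i,j}) \in A_{\delta/2}} \prod_i a_i s^{\omega^\alpha}_{\eps_{i,m}}\cdots(K_i)$, so Lemma~\ref{unionlemma} cannot be applied directly. But the workaround you propose does not overcome it. Knowing that $y$ lies in one particular product factor $\prod_i a_i L_i^T$ of the union does not allow you to localize the derivative to that factor: membership in $s_\eps(\bigcup_T \prod_i a_i L_i^T)$ asserts that every $w\dual$-neighbourhood of $y$ meets the \emph{whole union} in a set of diameter $> \eps$, and that large diameter may be witnessed entirely by points lying in factors $\prod_i a_i L_i^{T'}$ with $T' \neq T$. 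Indeed $\prod_i a_i L_i^T$ could be a singleton $\set{y}$ while $y$ is a limit (in $w\dual$) of points from other factors that are spread far apart in norm; then $y \in s_\eps(\bigcup \cdots)$ but $y \notin s_{\eps'}(\prod_i a_i L_i^T)$ for every $\eps' > 0$. The $\ell_q$-additivity of diameters across coordinates does not help here, because that identity applies to products, not to unions of products, and the cross-terms between factors are precisely what you cannot control. The "slack of $1/2$ from $\delta$ to $\delta/2$" does not absorb this, since the loss is not a bounded multiplicative one in $\eps$ but a wholesale failure of localization.

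The paper's actual mechanism is what your approach is missing: a \emph{discretization} of the parameter set. The paper passes from the uncountable $A_{(\eps+\delta)/4}$ to the finite set $A = \set{(\overline\eps_i)_{i=1}^n : (\eps_i)_{i=1}^n \in A_{(\eps+\delta)/4}}$, where $\overline\eps_i = j_i(\eps-\delta)/4$ with $j_i$ the largest integer such that $j_i(\eps-\delta) \leqslant 4\eps_i$; one checks $\overline\eps_i \leqslant \eps_i$, $(\overline\eps_i)_i \in A_{\delta/2}$, and that $A$ is finite since $\abs{A} \leqslant \lceil 4 \max_i \diam(K_i)/(\eps-\delta) + 1 \rceil^n$. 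Monotonicity of $s_\cdot^\gamma$ then replaces each $\eps_{i,j}$ by $\overline\eps_{i,j}$, yielding a \emph{finite} union to which Lemma~\ref{unionlemma}(i) (for $\alpha = 0$, losing a factor $1/2$ in $\eps$) or Lemma~\ref{unionlemma}(iii) (for $\alpha > 0$, with no loss, since $\omega^\alpha$ is a limit ordinal) genuinely applies. Your treatment of the base case at successor and limit ordinals has a related gap: the "diagonal extraction" you sketch from $k$-layer certificates to a single tuple $(\eps_i^\ast)$ does not, as stated, guarantee both that each $\eps_i^\ast$ lies below the relevant radii (needed for $y_i \in s_{\eps_i^\ast}^{\omega^\alpha}(K_i)$ by monotonicity) and that $\sum_i a_i^q (\eps_i^\ast)^q \geqslant (\delta/2)^q$ — pushing each coordinate down threatens the latter, while taking a bare $\liminf$ threatens the former. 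The paper instead derives inclusions (\ref{threet}) and (\ref{libel}) using the finite set $A$ together with pigeonhole/cofinality arguments on the (finitely many) tuples, and it is precisely the finiteness, again supplied by the discretization, that makes those arguments run. Without introducing some analogue of the finite set $A$, your proof strategy does not close.
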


\begin{proof}
If $\eps^q > \sum_{i=1}^n[a_i \cdot \diam (K_i)]^q$, then $s^{\omega^\alpha\cdot m}_\eps ( \prod_{i=1}^n a_i K_i )$ is empty since $\diam (\prod_{i=1}^n a_i K_i)<\eps$ and $\omega^\alpha \cdot m \geqslant 1$. The assertion of the lemma follows.

Suppose now that $\eps^q \leqslant \sum_{i=1}^n[a_i \cdot \diam (K_i)]^q$, so that $A_{\eps'}\neq \emptyset$ whenever $0<\eps'\leqslant \eps$. For $\alpha =0 $ and $m=1$, (\ref{fairypoo}) is a consequence of Lemma~\ref{techlem1}. Suppose that $\alpha$ is an ordinal such that (\ref{fairypoo}) holds for $m=1, 2, \ldots , k$, for some $0<k<\omega$. We will show that (\ref{fairypoo}) holds for $\alpha$ and $m=k+1$. Fix $\delta \in (0, \, \eps)$ and note that $A_{(\eps +\delta)/4}\subseteq A_{\delta /2}$ since $\delta /2 <(\eps +\delta)/4$. We now detail a method that assigns to each $(\eps_i)_{i=1}^n\in A_{(\eps +\delta)/4}$ an element $(\overline{\eps}_i)_{i=1}^n$ of a certain finite subset of $A_{\delta/2}$. For $(\eps_i)_{i=1}^n\in A_{(\eps +\delta)/4}$ and $1\leqslant i\leqslant n$, define
\[
j_i :=\max \set{j\in\nat\cup\set{0}\mid j(\eps -\delta )\leqslant 4\eps_i}
\]
and set $\overline{\eps}_i = j_i (\eps - \delta)/4$. Note that $\overline{\eps}_i\leqslant \eps_i \leqslant \diam (K_i)$ and
\begin{align*}
\bigg(\sum_{i=1}^na_i^q\overline{\eps}_i^q\bigg)^{1/q} & \geqslant \bigg(\sum_{i=1}^na_i^q\eps_i^q\bigg)^{1/q} - \bigg(\sum_{i=1}^na_i^q(\eps_i -\overline{\eps}_i)^q\bigg)^{1/q} \\ &\geqslant \frac{\eps +\delta}{4} - \frac{\eps -\delta}{4} \\ &= \frac{\delta}{2}\,,
\end{align*}
hence $(\overline{\eps}_i)_{i=1}^n\in A_{\delta/2}$. Moreover, for $(\eps_{i,1})_{i=1}^n, \ldots , (\eps_{i,m})_{i=1}^n \in A_{(\eps +\delta)/4}$ we have
\begin{equation}\label{partytonight}
\mdset{\omega^\alpha}{\eps_{i,  m}}{\mdset{\omega^\alpha}{\eps_{i, m-1}}{\ldots \mdset{\omega^\alpha}{\eps_{i,1}}{K_i}\ldots}} \subseteq \mdset{\omega^\alpha}{\overline{\eps}_{i,  m}}{\mdset{\omega^\alpha}{\overline{\eps}_{i, m-1}}{\ldots \mdset{\omega^\alpha}{\overline{\eps}_{i, 1}}{K_i}\ldots}}
\end{equation}
for all $1\leqslant i\leqslant n$. Let $A= \set{(\overline{\eps}_i)_{i=1}^n \mid (\eps_i)_{i=1}^n \in A_{(\eps +\delta)/4}}\subseteq A_{\delta /2}$. Then $A$ is finite, with
\[
\abs{A} \leqslant \left\lceil \frac{4\cdot \max_{1\leqslant i \leqslant n}\diam (K_i)}{\eps-\delta} +1 \right\rceil^n .
\]
The finiteness of $A$ will allow us to invoke Lemma~\ref{unionlemma} in the next step of our proof. To complete our demonstration that (\ref{fairypoo}) holds for $m=k+1$, we henceforth treat the cases $\alpha =0$ and $\alpha >0$ separately. 

If $\alpha =0$, then for $\delta \in (0, \, \eps)$ we have, by the induction hypothesis, (\ref{partytonight}), Lemma~\ref{unionlemma}(i) and Lemma~\ref{techlem1},
\begin{align*}
s^{k+1}_{\eps}&\hspace{-0.5mm}\Bigg( \prod_{i=1}^na_i K_i\Bigg)  \\ &\subseteq s_\eps \hspace{-1mm}\left( \, \overline{\bigcup_{(\eps_{i,1}),\ldots , (\eps_{i,k})\in A_{(\eps+\delta)/4}}\prod_{i=1}^na_i\mdset{\mbox{}}{\eps_{i,  k}}{\mdset{\mbox{}}{\eps_{i, k-1}}{\ldots \mdset{\mbox{}}{\eps_{i,1}}{K_i}\ldots}}}^{w\dual}\, \right) \\ &\subseteq s_\eps \hspace{-0.5mm}\Bigg( \bigcup_{(\eps_{i,1}),\ldots , (\eps_{i,k})\in A_{(\eps+\delta)/4}}\prod_{i=1}^na_i\mdset{\mbox{}}{\overline{\eps}_{i,  k}}{\mdset{\mbox{}}{\overline{\eps}_{i, k-1}}{\ldots \mdset{\mbox{}}{\overline{\eps}_{i,1}}{K_i}\ldots}} \Bigg) \\ &\subseteq \bigcup_{(\eps_{i,1}),\ldots , (\eps_{i,k})\in A_{(\eps+\delta)/4}}s_{\eps /2} \hspace{-0.5mm}\Bigg(\prod_{i=1}^na_i\mdset{\mbox{}}{\overline{\eps}_{i,  k}}{\mdset{\mbox{}}{\overline{\eps}_{i, k-1}}{\ldots \mdset{\mbox{}}{\overline{\eps}_{i,1}}{K_i}\ldots}} \Bigg) \\ &\subseteq \bigcup_{(\eps_{i,1}),\ldots , (\eps_{i,k}), (\eps_{i,k+1})\in A_{\delta /2}}\, \prod_{i=1}^na_i\mdset{\mbox{}}{\eps_{i,  k+1}}{\mdset{\mbox{}}{\eps_{i, k}}{\ldots \mdset{\mbox{}}{\eps_{i,1}}{K_i}\ldots}} ,
\end{align*}
as required. 

On the other hand, if $\alpha >0$ then it follows from the induction hypothesis, (\ref{partytonight}) and Lemma~\ref{unionlemma}(iii) that
\begin{align*}
s^{\omega^\alpha \cdot (k+1)}_{\eps}&\hspace{-0.5mm}\Bigg( \prod_{i=1}^na_i K_i\Bigg)  \\ &\subseteq s_\eps^{\omega^\alpha}\hspace{-1mm} \left( \, \overline{\bigcup_{(\eps_{i,1}),\ldots , (\eps_{i,k})\in A_{(\eps+\delta)/4}}\prod_{i=1}^na_i\mdset{\omega^\alpha}{\eps_{i,  k}}{\mdset{\omega^\alpha}{\eps_{i, k-1}}{\ldots \mdset{\omega^\alpha}{\eps_{i,1}}{K_i}\ldots}}}^{w\dual}\, \right) \\ &\subseteq s_\eps^{\omega^\alpha}\hspace{-0.5mm} \Bigg( \bigcup_{(\eps_{i,1}),\ldots , (\eps_{i,k})\in A_{(\eps+\delta)/4}}\prod_{i=1}^na_i\mdset{\omega^\alpha}{\overline{\eps}_{i,  k}}{\mdset{\omega^\alpha}{\overline{\eps}_{i, k-1}}{\ldots \mdset{\omega^\alpha}{\overline{\eps}_{i,1}}{K_i}\ldots}} \Bigg) \\ &\subseteq \bigcup_{(\eps_{i,1}),\ldots , (\eps_{i,k})\in A_{(\eps+\delta)/4}}s_{\eps}^{\omega^\alpha} \hspace{-0.5mm}\Bigg(\prod_{i=1}^na_i\mdset{\omega^\alpha}{\overline{\eps}_{i,  k}}{\mdset{\omega^\alpha}{\overline{\eps}_{i, k-1}}{\ldots \mdset{\omega^\alpha}{\overline{\eps}_{i,1}}{K_i}\ldots}} \Bigg) \\ &\subseteq \bigcup_{(\eps_{i,1}),\ldots , (\eps_{i,k}), (\eps_{i,k+1})\in A_{\delta /2}}\, \prod_{i=1}^na_i\mdset{\omega^\alpha}{\eps_{i,  k+1}}{\mdset{\omega^\alpha}{\eps_{i, k}}{\ldots \mdset{\omega^\alpha}{\eps_{i,1}}{K_i}\ldots}} ,
\end{align*}
as we would like.

Finally, suppose that $\beta$ is a nonzero ordinal (either limit or successor) such that (\ref{fairypoo}) holds for all $m<\omega$ and $\alpha<\beta$; we show that (\ref{fairypoo}) then holds for $m=1$ and $\alpha = \beta$. Fix $\delta \in (0, \, \eps)$ and let $A$ be defined as above. Then, since $A\subseteq A_{\delta /2}$, to complete the induction it suffices to show that
\begin{equation}\label{paris}
s^{\omega^\beta}_{\eps}\hspace{-0.5mm}\Bigg({\prod_{i=1}^na_iK_i}\Bigg) \subseteq \bigcup_{(\overline{\eps}_i)\in A}\, \prod_{i=1}^na_i s^{\omega^\beta}_{\overline{\eps}_i}(K_i) \, .
\end{equation}
To prove (\ref{paris}), we shall establish the following two inclusions:
\begin{equation}\label{threet}
s^{\omega^\beta}_{\eps}\hspace{-0.5mm}\Bigg({\prod_{i=1}^na_iK_i}\Bigg) \subseteq \bigcap_{(l,\,\alpha)\in (0,\,\omega)\times \beta} \, \bigcup_{(\overline{\eps}_i)\in A}\, \prod_{i=1}^na_i s^{\omega^\alpha \cdot l}_{\overline{\eps}_i}(K_i)
\end{equation}
and
\begin{equation}\label{libel}
\bigcap_{(l,\,\alpha)\in (0,\,\omega)\times \beta} \, \bigcup_{(\overline{\eps}_i)\in A}\, \prod_{i=1}^na_i s^{\omega^\alpha \cdot l}_{\overline{\eps}_i}(K_i) \subseteq \bigcup_{(\overline{\eps}_i)\in A}\, \prod_{i=1}^na_i s^{\omega^\beta}_{\overline{\eps}_i}(K_i) \, .
\end{equation}

We first deal with (\ref{threet}). To this end, let
\[
x\in s^{\omega^\beta}_{\eps}\hspace{-0.5mm}\Bigg({\prod_{i=1}^na_iK_i}\Bigg) = \bigcap_{(m, \, \alpha)\in (0, \, \omega)\times \beta} s^{\omega^\alpha \cdot m}_\eps \hspace{-0.5mm}\Bigg({\prod_{i=1}^na_iK_i}\Bigg) .
\]
Then, since $\frac{\eps +\delta}{2}<\eps$, it follows from the induction hypothesis and (\ref{partytonight}) that

\begin{align*}
x&\in \bigcap_{(m, \, \alpha)\in (0, \, \omega)\times \beta}\, \bigcup_{(\eps_{i, \, 1}), \ldots , (\eps_{i, \, m})\in A_{(\eps+\delta)/4}}\, \, \prod_{i=1}^na_i\mdset{\omega^\alpha}{\eps_{i,  m}}{\mdset{\omega^\alpha}{\eps_{i, m-1}}{\ldots \mdset{\omega^\alpha}{\eps_{i,1}}{K_i}\ldots}} \\
&\subseteq \bigcap_{(m, \, \alpha)\in (0, \, \omega)\times \beta}\, \bigcup_{(\overline{\eps}_{i, \, 1}), \ldots , (\overline{\eps}_{i, \, m})\in A}\, \, \prod_{i=1}^na_i\mdset{\omega^\alpha}{\overline{\eps}_{i,  m}}{\mdset{\omega^\alpha}{\overline{\eps}_{i, m-1}}{\ldots \mdset{\omega^\alpha}{\overline{\eps}_{i,1}}{K_i}\ldots}}\, .
\end{align*}
So for each $(m, \, \alpha ) \in (0, \, \omega )\times \beta$ there are $(\overline{\eps}_{i, \, 1,\, m,\, \alpha})_{i=1}^n, \ldots , (\overline{\eps}_{i, \, m,\, m,\, \alpha})_{i=1}^n \in A$ such that
\begin{equation}\label{hillsteer}
x\in \prod_{i=1}^na_i\mdset{\omega^\alpha}{\overline{\eps}_{i, \, m,\, m,\, \alpha}}{\mdset{\omega^\alpha}{\overline{\eps}_{i, \, m-1,\, m,\, \alpha}}{\ldots \mdset{\omega^\alpha}{\overline{\eps}_{i, \, 1,\, m,\, \alpha}}{K_i}\ldots}}\, .
\end{equation}

Suppose $l\in (0, \, \omega)$ and $\alpha <\beta$ and set $m_l = \abs{A}\cdot l$. Then there is a subset $J_{l, \, \alpha} \subseteq \set{1, \, 2, \ldots , m_l}$ with $\abs{J_{l,\, \alpha}}=l$ and $\abs{\set{(\overline{\eps}_{i, \, j,\, m,\, \alpha})_{i=1}^n\mid j\in J_{l,\, \alpha}}}=1$. Let $(\overline{\eps}_{i, \, l,\, \alpha})_{i=1}^n $ denote the unique element of $\set{(\overline{\eps}_{i, \, j,\, m,\, \alpha})_{i=1}^n\mid j\in J_{l,\, \alpha}} (\subseteq A)$. We may write $J_{l,\,\alpha} = \set{j_1< j_2 < \ldots < j_l}$, and then by (\ref{hillsteer}) we have, in particular,
\begin{align}
x&\in \prod_{i=1}^na_i\mdset{\omega^\alpha}{\overline{\eps}_{i, \, m_l,\, m_l,\, \alpha}}{\mdset{\omega^\alpha}{\overline{\eps}_{i, \, m_l-1,\, m_l,\, \alpha}}{\ldots \mdset{\omega^\alpha}{\overline{\eps}_{i, \, 1,\, m_l,\, \alpha}}{K_i}\ldots}}\notag \\
&\subseteq \prod_{i=1}^na_i\mdset{\omega^\alpha}{\overline{\eps}_{i, \, j_l,\, m_l,\, \alpha}}{\mdset{\omega^\alpha}{\overline{\eps}_{i, \, j_{l-1},\, m_l,\, \alpha}}{\ldots \mdset{\omega^\alpha}{\overline{\eps}_{i, \, j_1,\, m_l,\, \alpha}}{K_i}\ldots}} \notag \\
&= \prod_{i=1}^na_i\mdset{\omega^\alpha\cdot l}{\overline{\eps}_{i, \, l,\, \alpha}}{K_i}\notag \\
&\subseteq \bigcup_{(\overline{\eps}_i)\in A}\, \prod_{i=1}^na_i\mdset{\omega^\alpha\cdot l}{\overline{\eps}_{i}}{K_i}\, .\notag
\end{align}
As $l\in(0, \, \omega)$ and $\alpha<\beta$ were arbitrary, (\ref{threet}) follows.

We now prove (\ref{libel}). Let
\[
y\in \bigcap_{(l,\,\alpha)\in (0,\,\omega)\times \beta} \, \bigcup_{(\overline{\eps}_i)\in A}\, \prod_{i=1}^na_i s^{\omega^\alpha \cdot l}_{\overline{\eps}_i}(K_i),
\]
and for each $l\in (0, \, \omega)$ and $\alpha<\beta$ let $(\overline{\eps}_{i, \, (l, \, \alpha)})_{i=1}^n \in A$ be such that
\[
y\in \prod_{i=1}^na_i s^{\omega^\alpha \cdot l}_{\overline{\eps}_{i, \, (l, \, \alpha)}}(K_i).
\]
For each $(\overline{\eps}_i)_{i=1}^n \in A$, let \[ \mathcal{A}[(\overline{\eps}_i)_{i=1}^n] = \set{\omega^\alpha \cdot l \mid 0<l<\omega, \, \alpha<\beta, \, (\overline{\eps}_{i, \, (l, \, \alpha)})_{i=1}^n=(\overline{\eps}_i)_{i=1}^n} .\] Since $\set{\omega^\alpha \cdot l \mid 0<l<\omega, \, \alpha<\beta}$ is cofinal in $\omega^\beta$ and $\set{\mathcal{A}[(\overline{\eps}_i)_{i=1}^n]\mid (\overline{\eps}_i)_{i=1}^n\in A}$ is a finite partition of $\set{\omega^\alpha \cdot l \mid 0<l<\omega, \, \alpha<\beta}$, there exists $(\overline{\rho}_i)_{i=1}^n\in A$ such that $\mathcal{A}[(\overline{\rho}_i)_{i=1}^n]$ is cofinal in $\omega^\beta$. It follows that
\begin{align*} 
y \in \bigcap_{\xi \in \mathcal{A}[(\overline{\rho}_i)_{i=1}^n]}\, \prod_{i=1}^n a_i s^\xi_{\overline{\rho}_i}(K_i)&\subseteq \prod_{i=1}^n a_i \Bigg( \bigcap_{\xi \in \mathcal{A}[(\overline{\rho}_i)_{i=1}^n]}\, s^\xi_{\overline{\rho}_i}(K_i)\Bigg) \\ &=  \prod_{i=1}^n a_i \Bigg( \bigcap_{\xi <\omega^\beta}\, s^\xi_{\overline{\rho}_i}(K_i)\Bigg) \\&= \prod_{i=1}^n a_i s^{\omega^\beta}_{\overline{\rho}_i}(K_i)\\ &\subseteq \bigcup_{(\overline{\eps}_i) \in A}\, \prod_{i=1}^na_is^{\omega^\beta}_{\overline{\eps}_i}(K_i) \, .
\end{align*}
At last, the proof of Lemma~\ref{techlem2} is complete.
\end{proof}

\begin{lemma}\label{techlema}
\baselineskip=17pt 
Let $E_1, \ldots , E_n$ be Banach spaces and $K_1 \subseteq E_1\dual  , \ldots , K_n\subseteq E_n\dual$ nonempty $w\dual$-compact sets. Let $1\leqslant q<\infty$, $\eps >0$ and let $a_1, \ldots , a_n \geqslant 0$ be real numbers such that $\sum_{i=1}^na_i^q \leqslant 1$. Let $\displaystyle d= \max_{1\leqslant i \leqslant n}\diam (K_i) $ and let $m, \, M\in \nat$ be such that $M\geqslant m\geqslant 2$ and $(2^q-1)\eps^q M\geqslant 8^q d^q (m-1)$. Let $p$ be predual to $q$ and consider $\prod_{i=1}^na_iK_i$ as a subset of $(\bigoplus_{i=1}^nE_i)_{p}\dual$. If $\alpha$ is an ordinal such that $\mdset{\omega^\alpha \cdot m}{\eps/8}{K_i} =\emptyset$
for all $1\leqslant i \leqslant n$, then $s_\eps^{\omega^\alpha \cdot M}\hspace{-1.5mm}\left( \prod_{i=1}^n a_iK_i \right) =\emptyset  $.
\end{lemma}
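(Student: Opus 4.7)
The plan is to combine Lemma~\ref{techlem2} with an elementary counting argument. First, I would fix some $\delta \in (\eps/2,\,\eps)$---concretely, $\delta = 3\eps/4$ suffices---and apply Lemma~\ref{techlem2} with this $\delta$ and with $M$ in place of the parameter $m$, obtaining
\[
s_\eps^{\omega^\alpha \cdot M}\!\left(\prod_{i=1}^n a_iK_i\right) \subseteq \bigcup_{(\eps_{i,\,1}),\ldots,(\eps_{i,\,M}) \in A_{\delta/2}}\,\prod_{i=1}^n a_i\, \mdset{\omega^\alpha}{\eps_{i,\,M}}{\mdset{\omega^\alpha}{\eps_{i,\,M-1}}{\ldots \mdset{\omega^\alpha}{\eps_{i,\,1}}{K_i}\ldots}}.
\]
It then suffices to show that, for every admissible choice of tuples $(\eps_{i,\,j})_{i=1}^n \in A_{\delta/2}$, $1\leqslant j \leqslant M$, at least one of the factors in the associated product is empty.

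Fix such a choice and, for each $i$, set $c_i = \abs{\set{j \mid \eps_{i,\,j}\geqslant \eps/8}}$. The structural claim at the heart of the proof is that some $c_i \geqslant m$. Assuming this, the corresponding $i$-th factor may be controlled by discarding the iterations at indices $j$ with $\eps_{i,\,j}<\eps/8$ (each discard only enlarges the derived set, since $s^{\omega^\alpha}_\eta(L)\subseteq L$), leaving at least $m$ nested $s^{\omega^\alpha}$-iterations with thresholds all $\geqslant \eps/8$. A straightforward induction, using both monotonicity of $s^{\omega^\alpha}_\cdot$ in the set and anti-monotonicity in the threshold, then shows this is contained in $\mdset{\omega^\alpha \cdot m}{\eps/8}{K_i}$, which is empty by assumption.

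To prove the claim, I would argue by contradiction, assuming $c_i\leqslant m-1$ for every $i$. For each $j$, I would split the defining inequality $\sum_i a_i^q\eps_{i,\,j}^q\geqslant (\delta/2)^q$ of $A_{\delta/2}$ into contributions from $\eps_{i,\,j}<\eps/8$ (bounded above by $(\eps/8)^q\sum_i a_i^q \leqslant (\eps/8)^q$) and $\eps_{i,\,j}\geqslant \eps/8$ (bounded above by $d^q\sum_{i\,:\,\eps_{i,\,j}\geqslant \eps/8}a_i^q$). Rearranging, summing over $j$, interchanging the order of summation, and applying $c_i\leqslant m-1$ together with $\sum_i a_i^q\leqslant 1$ yields $M\bigl((\delta/2)^q-(\eps/8)^q\bigr)\leqslant (m-1)d^q$. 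With $\delta = 3\eps/4$ this becomes $M\eps^q(3^q-1)\leqslant 8^qd^q(m-1)$, which contradicts the hypothesis $(2^q-1)\eps^q M \geqslant 8^q d^q(m-1)$ since $3^q-1>2^q-1$ for $q\geqslant 1$.

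The most delicate point, I expect, is the numerical alignment between the choice of $\delta$ and the constant $(2^q-1)$ appearing in the hypothesis: taking $\delta$ strictly greater than $\eps/2$ introduces precisely the slack needed for the (non-strict) hypothesis on $M$ to produce the strict inequality required for the contradiction.
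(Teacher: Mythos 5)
Your proof is correct and rests on the same key ingredient as the paper's, namely Lemma~\ref{techlem2}, but the subsequent counting argument is organised differently. The paper applies Lemma~\ref{techlem2} with $\delta = \eps/2$, first singles out a coordinate $h$ with $\sum_{j}\eps_{h,j}^q\geqslant M\eps^q/4^q$, and then argues about the row $(\eps_{h,j})_j$ alone via a dichotomy: either $m$ of the $\eps_{h,j}$ are $\geqslant\eps/8$, or some $\eps_{h,j}>d$ -- the latter being vacuous since the bound $\eps_{h,j}\leqslant\diam(K_h)\leqslant d$ is built into the definition of $A_{\eps/4}$. The strict inequality that closes the paper's argument at $\delta=\eps/2$ comes from the fact that the small entries satisfy $\eps_{h,j}<\eps/8$ strictly. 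You instead interchange the order of summation to bound $\sum_i a_i^q c_i$, and draw the needed slack from the larger parameter $\delta = 3\eps/4$, which replaces the hypothesis constant $2^q-1$ by $3^q-1$ on the other side of the inequality and so yields the contradiction. Both routes resolve the same numerical constraint $(2^q-1)\eps^q M\geqslant 8^qd^q(m-1)$; yours is a touch cleaner in that it avoids the vacuous diameter-exceedance case, trading the strict-inequality bookkeeping for a wider margin on $\delta$. One point worth making explicit in a final write-up: unlike the paper you need not separately dispose of the degenerate case $\eps^q>\sum_i[a_i\diam(K_i)]^q$, since then $A_{\delta/2}=\emptyset$ and the union on the right-hand side of Lemma~\ref{techlem2} is already empty.
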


\begin{proof}
If $\eps^q > \sum_{i=1}^n[a_i \cdot \diam (K_i)]^q$, then $s^{\omega^\alpha\cdot M}_\eps ( \prod_{i=1}^n a_i K_i )$ is empty since $\diam (\prod_{i=1}^n a_i K_i)<\eps$ and $\omega^\alpha \cdot M \geqslant 1$. The assertion of the lemma follows.

So suppose now that $\eps^q \leqslant \sum_{i=1}^n[a_i \cdot \diam (K_i)]^q$. Then $A_{\eps'}\neq \emptyset$ whenever $0<\eps'\leqslant \eps$. Applying Lemma~\ref{techlem2} with $\delta = \eps /2$, we see that $s_\eps^{\omega^\alpha \cdot M}\left( \prod_{i=1}^n a_iK_i \right)$ is contained in a union of sets of the form
\begin{equation}\label{durryfump}
\prod_{i=1}^na_i\mdset{\omega^\alpha}{\eps_{i,  M}}{\mdset{\omega^\alpha}{\eps_{i, M-1}}{\ldots \mdset{\omega^\alpha}{\eps_{i,1}}{K_i}\ldots}} \, ,
\end{equation}
where $(\eps_{i,1})_{i=1}^n, \, (\eps_{i,2})_{i=1}^n,\ldots , (\eps_{i,M})_{i=1}^n\in A_{\eps/4}$. For each such product (\ref{durryfump}),
\[
a_1^q\Bigg( \sum_{j=1}^{M}\eps^q_{1, j} \Bigg) + a_2^q\Bigg( \sum_{j=1}^{M}\eps^q_{2, j}\Bigg) + \ldots + a_n^q\Bigg( \sum_{j=1}^{M}\eps^q_{n, j}\Bigg) \geqslant \frac{M\eps^q}{4^q} \, .
\]
Since $\sum_{i=1}^na_i^q\leqslant 1$, there is $h\in \set{1, \ldots , n}$ such that $\sum_{j=1}^{M}\eps^q_{h, j} \geqslant M \eps^q /4^q$. We claim that at least one of the following two conditions holds for such $h$:
\begin{itemize}
\item[(a)] There exists a subset $\set{j_1 <j_2<\ldots <j_m}\subseteq \set{1,\, 2, \ldots , M}$ such that $\min \set{\eps_{h, \, j_1}, \ldots , \eps_{h, \, j_m}}\geqslant \eps/8$.
\item[(b)] There exists $j\leqslant M$ such that $\eps_{h, \, j}>d$.
\end{itemize}
Indeed, suppose that (a) does not hold. Then there are distinct $j_1, \ldots , j_{m-1}$ in $ \set{1, \ldots , M}$ such that $\eps_{h,\, j} <\eps /8$ whenever $j \in \set{1, \ldots , M}\setminus \set{j_1, \ldots , j_{m-1}}$. It follows then that
\begin{align*}
\sum_{k=1}^{m-1}\eps_{h, \, j_k}^{q} &> \frac{M\eps^q}{4^q} - (M - m+1)\left( \frac{\eps}{8} \right)^q \\ &= M \Big( \left(\frac{\eps}{4} \right)^q - \left( \frac{\eps}{8} \right)^q \Big) +(m-1)\left(\frac{\eps}{8}\right)^q \\ &> M \Big( \left(\frac{\eps}{4} \right)^q - \left( \frac{\eps}{8} \right)^q \Big) \\ &\geqslant d^q(m-1) \, .
\end{align*}
Thus $\eps_{h, \, j_k}^{q} >d^q$ for some $k\leqslant m-1$, hence $\eps_{h, \, j_k} >d$ for some $k\leqslant m-1$. In particular, (b) holds whenever (a) does not. 

If (b) holds, then the factor $a_h\mdset{\omega^\alpha}{\eps_{h,  M}}{\mdset{\omega^\alpha}{\eps_{h, M-1}}{\ldots \mdset{\omega^\alpha}{\eps_{h,1}}{K_h}\ldots}}$ is empty since $\diam (K_h) \leqslant d < \eps_{h,\, j}$ for $j$ satisfying (b). It follows then that the product $\prod_{i=1}^na_i\mdset{\omega^\alpha}{\eps_{i,  M}}{\mdset{\omega^\alpha}{\eps_{i, M-1}}{\ldots \mdset{\omega^\alpha}{\eps_{i,1}}{K_i}\ldots}}$ is empty also, giving the desired result. On the other hand, if (a) holds then
\begin{align*}
\mdset{\omega^\alpha}{\eps_{h,  M}}{\mdset{\omega^\alpha}{\eps_{h, M-1}}{\ldots \mdset{\omega^\alpha}{\eps_{h,1}}{K_h}\ldots}} &\subseteq \mdset{\omega^\alpha}{\eps_{h,  j_m}}{\mdset{\omega^\alpha}{\eps_{h, j_{m-1}}}{\ldots \mdset{\omega^\alpha}{\eps_{h,j_1}}{K_h}\ldots}} \\ 
&\subseteq \mdset{\omega^\alpha \cdot m}{\eps /8}{K_h} .
\end{align*}
We conclude that $s_\eps^{\omega^\alpha \cdot M}\left( \prod_{i=1}^n a_iK_i \right)$ is contained in a union of direct products of the form (\ref{durryfump}), with each such direct product having a factor contained in a scalar multiple of one of the sets $\mdset{\omega^\alpha \cdot m}{\eps /8}{K_i}$, $1\leqslant i \leqslant n$. From this it is clear that if $\mdset{\omega^\alpha \cdot m}{\eps /8}{K_i}=\emptyset$ for all $1\leqslant i \leqslant n$, then $s_\eps^{\omega^\alpha \cdot M}\left( \prod_{i=1}^n a_iK_i \right)\subseteq \emptyset$.
\end{proof}

The next and final lemma required for our proof of Lemma~\ref{frount} shows how we can put a set $B_q(K_i\mid 1\leqslant i \leqslant n)$ inside a finite union of direct products of $w\dual$-compact sets in a way that will be useful for us.

\begin{lemma}\label{lecondsast}
Let $E_1, \ldots , E_n$ be Banach spaces, $K_1\subseteq E_1\dual, \ldots , K_n\subseteq E_n\dual$ nonempty, absolutely convex, $w\dual$-compact sets, $1\leqslant q<\infty$ and $l\in\nat$. Let $L=\nat^n\cap (l+n^{1/q})\cball{\ell_q^n}$. Then\[  B_q(K_i\mid 1\leqslant i \leqslant n) \subseteq \bigcup_{(k_i)_{i=1}^n\in L} \prod_{i=1}^n \frac{k_i}{l}K_i \, . \]
\end{lemma}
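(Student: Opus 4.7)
The plan is to begin with an arbitrary element of $B_q(K_i\mid 1\leq i\leq n)$, which by definition has the form $(a_ix_i)_{i=1}^n$ for some $(a_i)_{i=1}^n\in\cball{\ell_q^n}$ and $x_i\in K_i$, and to exhibit an explicit $(k_i)_{i=1}^n\in L$ with $(a_ix_i)_{i=1}^n\in\prod_{i=1}^n(k_i/l)K_i$. Since absolute convexity of each $K_i$ yields $\beta K_i\subseteq\gamma K_i$ whenever $\abs{\beta}\leq\abs{\gamma}$, I may replace each scalar $a_i$ by $\abs{a_i}$ and assume throughout that $a_i\geq 0$.

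Next I would make the obvious integer overapproximation by setting
\[
k_i:=\max\set{1,\,\lceil la_i\rceil},\qquad 1\leq i\leq n.
\]
By construction $0\leq la_i\leq k_i$, so $la_i/k_i\in[0,1]$ and balancedness of $K_i$ gives $(la_i/k_i)x_i\in K_i$; hence $a_ix_i=(k_i/l)\cdot(la_i/k_i)x_i\in(k_i/l)K_i$ for each $i$. This shows at once that $(a_ix_i)_{i=1}^n\in\prod_{i=1}^n(k_i/l)K_i$.

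It remains only to check that $(k_i)_{i=1}^n\in L$, i.e.\ that $\sum_{i=1}^nk_i^q\leq(l+n^{1/q})^q$. The bound $k_i\leq la_i+1$ holds in every case (trivially when $la_i\leq 1$, and by definition of the ceiling otherwise), so Minkowski's inequality in $\ell_q^n$ yields
\[
\biggl(\sum_{i=1}^nk_i^q\biggr)^{\!1/q}\leq\biggl(\sum_{i=1}^n(la_i+1)^q\biggr)^{\!1/q}\leq l\biggl(\sum_{i=1}^na_i^q\biggr)^{\!1/q}+n^{1/q}\leq l+n^{1/q},
\]
as required. There is no real obstacle: the lemma is a brief rational-approximation-and-Minkowski calculation, whose purpose is simply to replace the uncountable parameter space $\cball{\ell_q^n}$ in the definition of $B_q(K_i\mid 1\leq i\leq n)$ by the finite index set $L$, so that the derived-set estimates obtained earlier in the section (notably Lemmas~\ref{unionlemma} and \ref{techlema}) can be applied to a finite union of direct products covering $B_q(K_i\mid 1\leq i\leq n)$.
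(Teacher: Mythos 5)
Your proof is correct and follows essentially the same route as the paper: round each $\abs{a_i}$ up to an integer multiple of $1/l$, use absolute convexity of $K_i$ to justify the scalar replacement, and apply Minkowski's inequality in $\ell_q^n$ to show the resulting $n$-tuple of integers lands in $L$. The only cosmetic difference is your choice $k_i=\max\{1,\lceil la_i\rceil\}$ versus the paper's $j_i=\inf\{j\in\nat\mid l\abs{a_i}<j\}$; both satisfy the same key bound $k_i\leqslant l\abs{a_i}+1$ that feeds into Minkowski.
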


\begin{proof}
Let $(a_i)_{i=1}^n \in \cball{\ell_q^n}$ and set $j_i = \inf \{ j\in\nat\mid l\abs{a_i}<j \}$, $1\leqslant i \leqslant n$. Then $j_i -1 \leqslant l\abs{a_i}$ for all $i$, hence $\norm{(j_i)_{i=1}^n}_{\ell_q^n} \leqslant \norm{(la_i)_{i=1}^n}_{\ell_q^n} + {n^{1/q}} \leqslant l+n^{1/q}$. In particular, $(j_i)_{i=1}^n \in L$. As the sets $K_i$, $1\leqslant i \leqslant n$, are absolutely convex, we have $a_i K_i \subseteq \frac{j_i}{l}K_i$ for all $i$, hence $\prod_{i=1}^n a_i K_i \subseteq \prod_{i=1}^n \frac{j_i}{l}K_i$. It follows that
\[
\quad B_q(K_i\mid 1\leqslant i \leqslant n) = \bigcup_{(a_i) \in \cball{\ell_q^n}}{\, \prod_{i=1}^n a_i K_i} \subseteq \bigcup_{(k_i)_{i=1}^n\in L}\prod_{i=1}^n \frac{k_i}{l}K_i \, . \qedhere
\]
\end{proof}

We note a few points of interest regarding the sets $\bigcup_{(k_i)\in L} \prod_{i=1}^n \frac{k_i}{l}K_i$ from Lemma~\ref{lecondsast}. For each $l\in\nat$, let $L_l=\nat^n\cap (l+n^{1/q})\cball{\ell_q^n}$. Then the intersection of the collection $\{\bigcup_{(k_i)\in L_l} \prod_{i=1}^n \frac{k_i}{l}K_i\}_{l\in\nat}$ is precisely $B_q(K_i\mid 1\leqslant i \leqslant n)$; this follows from the observation that for $l\in\nat$, each point of $\bigcup_{(k_i)\in L_l} \prod_{i=1}^n \frac{k_i}{l}K_i$ is no greater than $n^{1/q}\cdot l^{-1}\cdot \max \{ \diam (K_i )\mid 1\leqslant i \leqslant n\}$ in norm distance from $B_q(K_i\mid 1\leqslant i \leqslant n)$. We may thus think of $\{\bigcup_{(k_i)\in L_l} \prod_{i=1}^n \frac{k_i}{l}K_i\}_{l\in\nat}$ as a sequence of increasingly closer approximations to the set $B_q(K_i\mid 1\leqslant i \leqslant n)$, and our need to closely approximate $B_q(K_i\mid 1\leqslant i \leqslant n)$ is reflected by our choice of $l$ in the following proof of Lemma~\ref{frount}.

\begin{altproof4}
Fix $\delta \in (0, \, \eps /16)$. Let $l \geqslant 16\delta n^{1/q}(\eps -16\delta)^{-1}$ be an integer and let $ L = \nat^n\cap (l+n^{1/q})\cball{\ell_q^n} $. By Lemma~\ref{lecondsast} and the hypothesis of Lemma~\ref{frount},
\[
s^{\omega^\alpha \cdot M}_{\eps}\hspace{-0.5mm}\Bigg(\bigcup_{(k_i)\in L} \prod_{i=1}^n \frac{k_i}{l}K_i \Bigg) \supseteq s^{\omega^\alpha \cdot M}_{\eps}(B_q(K_i\mid 1\leqslant i \leqslant n)) \supsetneq \emptyset\, .
\]
Thus, since $L$ is finite, by Lemma~\ref{unionlemma}(i) there exists $(h_i)_{i=1}^n\in L$ such that
\begin{equation}\label{weednaughter}
s^{\omega^\alpha \cdot M}_{\eps/2}\hspace{-1.5mm}\left(\prod_{i=1}^n \frac{h_i}{l}K_i \right) \neq \emptyset \, .
\end{equation}
Let $\rho = (1+ \frac{n^{1/q}}{l})^{-1}$. By (\ref{weednaughter}) and the homogeneity of the derivations $s^\gamma_{\eps'}$ (where $\gamma$ is an ordinal and $\eps'>0$), we have
\begin{equation}\label{peedpaughter}
s^{\omega^\alpha \cdot M}_{\rho \eps/2}\hspace{-1.5mm}\left(\prod_{i=1}^n \frac{\rho h_i}{l}K_i \right) = \rho s^{\omega^\alpha \cdot M}_{\eps/2}\hspace{-1.5mm}\left(\prod_{i=1}^n \frac{h_i}{l}K_i \right) \neq \emptyset \, .
\end{equation}
Thus, since $ ||(\frac{\rho h_i}{l})_{i=1}^n||_{\ell_q^n} \leqslant 1$, it follows from (\ref{peedpaughter}) and Lemma~\ref{techlema} that there is $i \leqslant n$ such that $\mdset{\omega^\alpha \cdot m}{\rho \eps /16}{K_{i}}\neq \emptyset$. As $\rho\eps /16 \geqslant\delta$, we conclude that $\mdset{\omega^\alpha \cdot m}{\delta}{K_{i}}\supseteq \mdset{\omega^\alpha \cdot m}{\rho\eps /16}{K_{i}}\supsetneq \emptyset$. This completes the proof.
\end{altproof4}

\begin{remark}
Lemma~\ref{frount} is similar to \cite[Lemma~5.9]{Brookerc}. Though many of the arguments and preliminary results used here in the proof of Lemma~\ref{frount} have been employed similarly in the proof of \cite[Lemma~5.9]{Brookerc}, neither of these technical lemmas are strong enough to be used in place of the other in the proofs of the respective theorems for which they have been developed.
\end{remark}

\section*{Acknowledgements}
The author thanks Dr. Rick Loy for many valuable suggestions that have improved the presentation of the results.

Part of the research presented here was completed during a visit of the author to the D\'{e}partement de Math\'{e}matiques at the Universit\'{e} de Franche-Comt\'{e}, and the author thanks the department for their kind hospitality. The author is especially grateful to Prof. Gilles Lancien for many stimulating discussions during his stay.

\vspace{10mm}

\footnotesize
\noindent Philip Brooker

\noindent Mathematical Sciences Institute

\noindent Australian National University

\noindent Canberra ACT 0200, Australia

\vphantom{pahb}

\noindent \textit{E-mail addresses}: philip.brooker@anu.edu.au, philip.a.h.brooker@gmail.com

\end{document}